\theoremstyle{definition}
\newtheorem{theo}{Theorem}[section]
\newtheorem{defi}[theo]{Definition}
\newtheorem{prop}[theo]{Proposition}
\newtheorem{cor}[theo]{Corollary}
\newtheorem{lemma}[theo]{Lemma}
\newtheorem{exa}[theo]{Example}
\newtheorem{rem}[theo]{Remark}
\newtheorem{nota}[theo]{Notation}
\numberwithin{equation}{section}
\newcommand{\N}{{\mathbb N}}
\newcommand{\F}{{\mathbb F}}
\newcommand{\Z}{{\mathbb Z}}
\renewcommand{\P}{{\mathbb P}}
\newcommand{\cC}{{\mathcal C}}
\newcommand{\cD}{{\mathcal D}}
\newcommand{\cF}{{\mathcal F}}
\newcommand{\cG}{{\mathcal G}}
\newcommand{\cH}{{\mathcal H}}
\newcommand{\cM}{{\mathcal M}}
\newcommand{\cB}{{\mathcal B}}
\newcommand{\cA}{{\mathcal A}}
\newcommand{\cU}{{\mathcal U}}
\newcommand{\cL}{{\mathcal L}}
\newcommand{\cR}{{\mathcal R}}
\newcommand{\cV}{{\mathcal V}}
\newcommand{\cW}{{\mathcal W}}
\newcommand{\qM}{$q$-matroid}
\newcommand{\GL}{\mbox{\rm GL}}
\newcommand{\rk}{\mbox{\rm rk}\,}
\newcommand{\rs}{\mbox{\rm rowsp}}
\newcommand{\W}{\mbox{\rm W}}
\newcommand{\supp}{\mbox{\rm supp}}
\newcommand{\cl}{\text{cl}}
\newcommand{\wtrk}{\mbox{$\text{wt}_{\text{rk}}$}}
\newcommand{\wtHam}{\mbox{$\text{wt}_{\text{H}}$}}
\newcommand{\suppHam}{\mbox{$\text{supp}_{\text{H}}$}}
\renewcommand{\leqq}{\mbox{$\leq_{q}\, $}}
\newcommand{\leqqm}{\mbox{$\leq_{q^m}\, $}}
\newcommand{\Mat}{\mbox{\textbf{Mat}}}
\newcommand{\T}{\mbox{$\!^{\sf T}$}}
\newcommand{\subspace}[1]{\langle{#1}\rangle}
\newcommand{\Binom}[2]{\genfrac{(}{)}{0pt}{1}{#1}{#2}}
\newcommand{\BinomS}[2]{\genfrac{(}{)}{0pt}{2}{#1}{#2}}
\newcommand{\Gaussian}[2]{\genfrac{[}{]}{0pt}{1}{#1}{#2}}
\newcommand{\GaussianD}[2]{\genfrac{[}{]}{0pt}{0}{#1}{#2}}
\newenvironment{liste}{\begin{list}{--\hfill}{\topsep-1.3ex \labelwidth.4cm
   \leftmargin.5cm \labelsep.1cm \rightmargin0cm \parsep0ex \itemsep.6ex
   \partopsep-1.3ex}}{\end{list}}
\newcounter{alp}
\newcounter{ara}
\newcounter{rom}
\newenvironment{alphalist}{\begin{list}{(\alph{alp})\hfill}{\usecounter{alp}
     \topsep-1.3ex \labelwidth.7cm \leftmargin.7cm \labelsep0cm
     \rightmargin0cm \parsep0ex \itemsep0ex
     \partopsep-1.3ex}}{\end{list}}
\begin{document}
\title{Polynomial Invariants of $q$-Matroids and Rank-Metric Codes}
\author{Heide Gluesing-Luerssen\thanks{Department of Mathematics, University of Kentucky, Lexington KY 40506-0027, USA; heide.gl@uky.edu. HGL is the corresponding author.}\quad and Benjamin Jany\thanks{Department of Mathematics and Computer Science, Eindhoven University of Technology, the Netherlands; b.jany@tue.nl. BJ is supported by the Eindhoven Hendrick Casimir Institute.}
}

\date{September 24, 2025}
\maketitle
	
\begin{abstract}
\noindent
It is shown that the Whitney function of a representable $q$-matroid and the collection of all higher weight enumerators of any 
representing rank-metric code determine each other via a monomial substitution.
Moreover, the $q$-matroid itself and the collection of all higher support enumerators of the code determine each other.
Next, it is proven that the Whitney function of a $q$-matroid and the Whitney function of its projectivization determine each other via a 
monomial substitution.
 Finally, $q$-matroids with isomorphic projectivizations are studied. 
 It is shown that the projectivizations are isomorphic iff the $q$-matroids admit a dimension-preserving lattice isomorphism between 
 their lattices of flats. Such $q$-matroids are called weakly isomorphic.
\end{abstract}

\textbf{Keywords:} $q$-matroid, rank-metric codes, Whitney function, higher weight enumerators, projectivization.

\section{Introduction}\label{S-Intro}
It is well known that a Hamming-metric block code~$C$ give rise to a matroid~$M_C$ in such a way that (some of) the invariants of~$M_C$ 
are closely related to the invariants of~$C$.
The most famous relation is arguably Greene's Theorem~\cite{Gre76}, which states that the weight enumerator of~$C$ can be obtained
as an evaluation of the Witney function (or rank-generating function) of~$M_C$. 
This result has been extended in~\cite{Bri07,Bri10} (see also \cite{JuPe13}) to the higher weight enumerators.
The latter involve the weight distributions of the subcodes of~$C$.
In \cite{Bri07,Bri10} it is shown that each higher weight enumerator arises as an evaluation of the Whitney function and, conversely, the Whitney function 
equals the sum of evaluations of the higher weight enumerators.
In other words, the Whitney function of~$M_C$ and the collection of all higher weight enumerators of~$C$ carry the same information.
In~\cite{Bri10}, the author goes even further and studies not just the weight, but the actual support of codewords and subcodes.
The author proves that the collection of all higher support distributions of~$C$ and the matroid~$M_C$ fully 
determine each other in form of a matrix identity.

In this paper we generalize these results to rank-metric codes and their associated \qM{}s.
For the relation between rank-metric codes and \qM{}s we refer to \cite{JuPe18,GJLR19,Shi19,GLJ22Gen} and the next section.
We show that for a given rank-metric code~$\cC\leq\F_{q^m}^n$ and its associated \qM{}~$\cM_{\cC}$ (a)  the Whitney function of~$\cM_{\cC}$ and the collection of all higher weight enumerators of~$\cC$ determine each other, and (b) $\cM_{\cC}$ and the collection of all higher support distributions determine each other.
While these results have a striking similarity to the Hamming-metric case derived in \cite{Bri10}, there is a fundamental difference.
In the Hamming-metric case the relation between the Whitney function and the higher weight enumerators can be expressed in terms of 
 evaluations (i.e., variable substitutions of the form $(x,y)\mapsto (\alpha(x,y),\,\beta(x,y))$), and the same is true for most relations between 
polynomial invariants of the matroid, such as between the Whitney function and the Tutte polynomial or specializing the Whitney function to the characteristic polynomial.
In the rank-metric case, however, the polynomial invariants are not related via an evaluation, but rather via a monomial substitution, that is,
a $\Z$-linear map $\Z[x,y]\rightarrow\Z[x,y]$ defined by $x^iy^j\mapsto u_{i,j}$, where $u_{i,j}\in\Z[x,y]$.
More precisely, suitable monomial substitutions of the Whitney function of~$\cM_{\cC}$ result in the higher weight enumerators of~$\cC$, and conversely, 
the sum of suitable monomial substitutions of the higher weight enumerators equals the Whitney function (\cref{C-SubA}).
For $q\rightarrow1$ we recover the evaluation identities for Hamming-metric codes.
 
The central role played by monomial substitutions is also underscored by other cases.
First, the Whitney function of a \qM{} specializes to the characteristic polynomial via a monomial substitution (\cref{R-CharPoly}). 
Secondly, the convolution identity between the Whitney function 
and the Tutte polynomial of a \qM{}, recently derived in \cite{ByFu25}, can be rewritten as a monomial substitution (\cref{R-Tutte}).

The second part of this paper is devoted to the projectivization of a \qM{}. 
For a \qM{}~$\cM$ with ground space~$\F^n$, the projectivization $\P\cM$  is a matroid whose ground space is the set of lines of~$\F^n$.
It has been introduced in \cite{JPV23} and further studied in~\cite{Ja23}.
We show that the Whitney functions of~$\cM$ and $\P\cM$ are related via monomial substitutions (\cref{C-SubWhitney}).
It is known from~\cite{Ja23} that if~$\cM$ is represented by the rank-metric code~$\cC$, then $\P\cM$ is represented by the 
projectivization~$\hat{\cC}$ of~$\cC$, which is a Hamming-metric code.
We prove that the higher weight enumerators of~$\cC$ and $\hat{\cC}$ determine each other via monomial substitutions.

Examples show that non-isomorphic \qM{}s may have isomorphic projectivizations. In the last section we study this situation in detail.
It turns out that \qM{}s have isomorphic projectivizations if and only if they are weakly isomorphic, i.e., they admit a dimension-preserving 
lattice isomorphism between their lattices of flats (\cref{T-PMIsoWeaklyIso}).
We furthermore show, that for weakly isomorphic \qM{}s~$\cM_1$ and~$\cM_2$, any dimension-preserving lattice isomorphism between the flats
extends to a dimension- and rank-preserving bijection between the subspace lattices of the ground spaces of~$\cM_1$ and $\cM_2$.
This illustrates the difference to isomorphic \qM{}s, where the dimension- and rank-preserving bijection is actually induced by a vector 
space isomorphism between the ground spaces.

\begin{nota}\label{NotaIntro}
Throughout, we let $\F=\F_q$.
For any finite-dimensional $\F$-vector space~$E$ let $\cL(E)$ be the subspace lattice of~$E$.
For any field~$\hat{\F}$ and matrix $M\in\hat{\F}^{k\times n}$, we denote by $\rs(M)$ the row space of~$M$ over~$\hat{\F}$, that is, 
$\rs(M)=\{uM\mid u\in\hat{\F}^k\}$.
For $v_1,\ldots,v_t\in E$ the notation $\subspace{v_1,\ldots,v_t}$ stands for the $\F$-subspace generated by the given vectors.
We set $[n]=\{1,\ldots,n\}$ and $[n]_0=\{0,1,\ldots,n\}$.
The abbreviation NSBF stands for non-degenerate symmetric bilinear form.
Finally, for any set~$X$ we use the Kronecker-$\delta$ function, that is, for any $a,b\in X$ we set $\delta_{a,b}=1$ if $a=b$ and 
$\delta_{a,b}=0$ otherwise.
\end{nota}

\section{Basics on $q$-Matroids}\label{S-Basics}

Before starting with \qM{}s, we summarize some basic notation and terminology for classical matroids.
It will be needed throughout the paper, in particular in Sections~\ref{S-Proj} and \ref{S-WeakIso}.

\begin{rem}\label{R-ClassMatr}
The \textbf{Whitney function} of a matroid $M=(S,r)$ of rank~$K$ on the ground set~$S$ with $|S|=N$ is defined as 
\begin{equation}\label{e-WhitneyMatr}
   R_M(x,y)=\sum_{A\subseteq S}x^{K-r(A)}y^{|A|-r(A)}=\sum_{i=0}^K\sum_{j=0}^{N-K}\nu_{i,j}x^iy^j,
\end{equation}
where $\nu_{i,j}=|\{A\subseteq S\mid r(A)=K-i,\,|A|=K-i+j\}|$ (the monotonicity 
of the nullity function, $A\mapsto|A|-r(A)$, implies that $\nu_{i,j}=0$ for $j>N-K$).
Furthermore, the \textbf{characteristic polynomial} of~$M$ is
$\chi_M=\sum_{A\subseteq S}\mu(0,A)x^{K-r(A)}=\sum_{A\subseteq S}(-1)^{|A|}x^{K-r(A)}$, where $\mu$ is the M\"obius function on the subset lattice of~$S$. 
The characteristic polynomial satisfies $\chi_M(x)=(-1)^KR_M(-x,-1)$.
Two matroids $M=(S,r)$ and $M'=(S',r')$ are \textbf{isomorphic}, denoted by $M\cong M'$, if there exists a bijection $\alpha:S\longrightarrow S'$ such that $r'(\alpha(A))=r(A)$ for all subsets $A\subseteq S$.
Finally, we use the following notation for \textbf{vector matroids}. 
Let $G\in\F^{K\times N}$ be of rank~$K$.  
For any subset $\Delta\subseteq[N]$ define $G_\Delta$ as the submatrix of~$G$ consisting of the columns indexed by~$\Delta$.
The vector matroid $M_G$ is defined as $([N],r)$, where $r(\Delta)=\rk(G_{\Delta})$.
\end{rem}

We now turn to the $q$-analogues and recall the definition of \qM{}s based on rank functions.

\begin{defi}\label{D-qMatroid}
A \textbf{$q$-matroid with ground space~$E$} is a pair $\cM=(E,\rho)$, where~$E$ is a finite-dimensional $\F$-vector space
and $\rho: \cL(E)\longrightarrow\N_{\geq0}$ is a map satisfying
\begin{liste}
\item[(R1)\hfill] Boundedness: $0\leq\rho(V)\leq \dim V$  for all $V\in\cL(E)$;
\item[(R2)\hfill] Monotonicity: $V\leq W\Longrightarrow \rho(V)\leq \rho(W)$  for all $V,W\in\cL(E)$;
\item[(R3)\hfill] Submodularity: $\rho(V+W)+\rho(V\cap W)\leq \rho(V)+\rho(W)$ for all $V,W\in\cL(E)$.
\end{liste}
We call $\rho(V)$ the \textbf{rank of}~$V$ and $\rho(\cM):=\rho(E)$ the rank of the \qM.
Moreover, the \textbf{nullity} of~$V$ is defined as $\dim V-\rho(V)$.
\end{defi}

\begin{rem}\label{R-Nullity}
It is easy to see that the nullity function 
$\cL(E)\longrightarrow\N_0,\, V\longmapsto \dim V-\rho(V)$  is increasing, that is: 
$V\leq W\Longrightarrow \dim V-\rho(V)\leq \dim(W)-\rho(W)$; see also \cite[p.~99]{AlBy24}.
\end{rem}

\begin{defi}\label{D-Equiv}
Two \qM{}s $\cM_i=(E_i,\rho_i)$ are \textbf{isomorphic}, denoted by $\cM_1\cong\cM_2$, if there exists an $\F$-isomorphism $\alpha:E_1\longrightarrow E_2$ such that $\rho_2(\alpha(V))=\rho_1(V)$ for all $V\in\cL(E_1)$.
\end{defi}

A large class of $q$-matroids are the representable ones.

\begin{theo}[\mbox{\cite[Sec.~5]{JuPe18}}]\label{T-ReprqMatr}
Let $\F_{q^m}$ be a field extension of $\F$ and let $G\in\F_{q^m}^{k\times n}$.
Define the map $\rho:\cL(\F^n)\longrightarrow\N_0$ via
\[
    \rho(\rs(Y))=\rk(GY\T)\ \text{ for all matrices $Y\in\F^{t\times n}$ and all $t\in[n]_0$}
\]
(where the rank is over $\F_{q^m}$).
Then~$\rho$ is well-defined and $\cM_G:=(\F^n,\rho)$ is a $q$-matroid of rank equal to the rank of~$G$ (over $\F_{q^m}$). It is called the $q$-matroid \textbf{represented by $G$}.
A $q$-matroid $\cM$ of rank~$k$ is called \textbf{$\F_{q^m}$-representable} if $\cM\cong\cM_G$ for some matrix~$G\in\F_{q^m}^{k\times n}$.
\end{theo}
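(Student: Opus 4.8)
The plan is to reformulate~$\rho$ through a single linear map. Introduce the $\F$-linear map $\phi\colon\F^n\to\F_{q^m}^k$ given by $\phi(v)=Gv\T$ (viewing $v\in\F^n$ as a row vector), together with its $\F_{q^m}$-linear hull $\psi\colon\cL(\F^n)\to\cL(\F_{q^m}^k)$ defined by $\psi(V)=\subspace{\phi(v)\mid v\in V}_{\F_{q^m}}$, the $\F_{q^m}$-span of $\phi(V)$. The key observation is that the columns of $GY\T$ are precisely the vectors $\phi(y_1),\dots,\phi(y_t)$, where $y_1,\dots,y_t$ are the rows of~$Y$; since these rows generate $\rs(Y)$, the column space of $GY\T$ over $\F_{q^m}$ equals $\psi(\rs(Y))$, and therefore
\[
   \rk(GY\T)=\dim_{\F_{q^m}}\psi(\rs(Y)).
\]
As the right-hand side depends only on the subspace $\rs(Y)$ and not on the chosen generating matrix, this identity simultaneously establishes well-definedness and rewrites the rank as $\rho(V)=\dim_{\F_{q^m}}\psi(V)$, which is the form I would carry through the rest of the proof. (Well-definedness can also be seen directly: if $\rs(Y_1)=\rs(Y_2)$, then $Y_2=AY_1$ and $Y_1=BY_2$ for suitable matrices $A,B$ over~$\F$, whence $GY_2\T=GY_1\T A\T$ gives $\rk(GY_2\T)\le\rk(GY_1\T)$ and symmetrically.)

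With $\rho(V)=\dim_{\F_{q^m}}\psi(V)$ in hand, the rank axioms reduce to statements about dimensions of subspaces of $\F_{q^m}^k$. For (R1), a basis $v_1,\dots,v_d$ of~$V$ with $d=\dim V$ yields generators $\phi(v_1),\dots,\phi(v_d)$ of $\psi(V)$ over $\F_{q^m}$, so $0\le\dim_{\F_{q^m}}\psi(V)\le d=\dim V$. For (R2), $V\le W$ gives $\phi(V)\subseteq\phi(W)$, hence $\psi(V)\le\psi(W)$ and the dimensions are monotone. For the rank statement, taking $Y=I_n$ gives $\rs(Y)=\F^n$ and $GY\T=G$, so $\rho(\F^n)=\rk(G)$.

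The main point is (R3). Here I would use two properties of~$\psi$: first, since~$\phi$ is $\F$-linear we have $\phi(V+W)=\phi(V)+\phi(W)$, and passing to $\F_{q^m}$-spans gives the \emph{exact} identity $\psi(V+W)=\psi(V)+\psi(W)$; second, the inclusion $\phi(V\cap W)\subseteq\phi(V)\cap\phi(W)$ forces $\psi(V\cap W)\le\psi(V)\cap\psi(W)$. Combining these with the dimension formula for the subspaces $\psi(V),\psi(W)$ of $\F_{q^m}^k$ yields
\[
   \rho(V+W)+\rho(V\cap W)\le\dim_{\F_{q^m}}\bigl(\psi(V)+\psi(W)\bigr)+\dim_{\F_{q^m}}\bigl(\psi(V)\cap\psi(W)\bigr)=\rho(V)+\rho(W).
\]
The only genuinely subtle step, and the one I would treat most carefully, is the passage from the $\F$-image to its $\F_{q^m}$-span in the second property: the reverse inclusion $\psi(V)\cap\psi(W)\le\psi(V\cap W)$ can fail, which is exactly why submodularity holds as an inequality rather than an equality.
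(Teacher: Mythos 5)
Your proof is correct. The paper states this theorem as a citation from Jurrius--Pellikaan \cite{JuPe18} without reproducing a proof, and your argument --- rewriting $\rho(V)$ as the $\F_{q^m}$-dimension of the $\F_{q^m}$-span $\psi(V)$ of $\{Gv\T\mid v\in V\}$, which immediately gives well-definedness, and deducing (R1)--(R3) and $\rho(\F^n)=\rk(G)$ from $\psi(V+W)=\psi(V)+\psi(W)$ together with $\psi(V\cap W)\leq\psi(V)\cap\psi(W)$ --- is exactly the standard argument given in that reference (the paper's later Remark~3.6-style description of $\rho$ via $\tilde{\cC}(V^\perp)$ is an equivalent dual formulation).
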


Note that $\cM_G$ only depends on the row space of~$G$, defined as $\rs(G):=\{uG\mid u\in\F_{q^m}^k\}$, and not on the actual choice of~$G$. 

The Whitney function of~$\cM$ (also known as rank-generating function or corank-nullity function) is defined in the usual way; see also  
\cite[Def.~64]{ByFu25}.

\begin{defi}\label{D-Polys}
Let $\cM=(E,\rho)$ be a \qM{} of rank~$k$ on an $n$-dimensional ground space~$E$. 
The \textbf{Whitney function} (or \textbf{corank-nullity function}) of~$\cM$ is 
\[
   R_\cM=\sum_{V\leq E}x^{k-\rho(V)}y^{\dim V -\rho(V)}\in\Z[x,y].
\]
The exponent $k-\rho(V)$ is called the \textbf{corank} of~$V$ and $\dim V -\rho(V)$ is the \textbf{nullity}.
\end{defi}

Clearly, the Whitney function is invariant under isomorphisms of \qM{}s.

One can easily describe the coefficients of the Whitney function explicitly.
A monomial $x^iy^j$ appears in~$R_\cM$ if and only if there exists a subspace $V\in\cL(E)$ such that $\rho(V)=k-i$ and 
$\dim V=k-i+j$.
In that case $j=\dim V-\rho(V)\leq\dim E-\rho(E)= n-k$ by the monotonicity of the nullity function.
As a consequence, the Whitney function is of the form
\begin{equation}\label{e-WhitExpand}
   R_{\cM}=\sum_{i=0}^{k} \sum_{j=0}^{n-k} \nu_{i,j}x^{i}y^{j}
\end{equation}
where 
\begin{equation}\label{e-WhitCoeff}
   \nu_{i,j}= \big|\{V\leq E\mid \rho(V)=k-i,\,\dim V=k-i+j\}\big|.      
\end{equation}

In \cite[Rem.~3.2]{GLJ25CF} various invariants of the \qM{} are shown to be enumerated by the Whitney function.

The following can easily be verified.

\begin{exa}\label{E-WhitneyUnif}
Let $k\in[n]_0$. The \textbf{uniform \qM{}} of rank~$k$ on an $n$-dimensional ground space~$E$, denoted by $\cU_{k,n}(q)$, is defined via the rank function $\rho(V)=\min\{k,\dim V\}$ for all $V\in\cL(E)$. 
It satisfies $R_{\cU_{k,n}(q)}=\sum_{j=0}^k\Gaussian{n}{j}x^{k-j}+\sum_{j=k+1}^n\Gaussian{n}{j}y^{j-k}$.
We call $\cU_{0,n}(q)$ and $\cU_{n,n}(q)$ the \textbf{trivial} and \textbf{free} \qM{}, respectively.
\end{exa}

The following maps will play a crucial role throughout this paper. 
They will allow us to relate various invariants of \qM{}s and rank-metric codes.

\begin{defi}\label{D-MonomSub}
For $i,j\in\N_0$ let $u_{i,j}\in\Z[x,y]$ and set $\cU=(u_{i,j})_{i,j\in\N_0}$. The $\Z$-linear map
\[
    \Omega_{\cU}:\Z[x,y]\longrightarrow\Z[x,y],\ \sum_{i,j}f_{i,j}x^iy^j\longmapsto \sum_{i,j}f_{i,j}u_{i,j}
\]
is called a \textbf{monomial substitution}.
\end{defi}

As we will show, these monomial substitutions generalize the `standard substitutions' known from matroid theory.
The following two examples provide a first illustration.

\begin{rem}\label{R-CharPoly}
The characteristic polynomial of a \qM{} $\cM=(E,\rho)$ of rank~$k$ is defined as 
$\chi_\cM=\sum_{V\leq E}\mu(0,V)x^{k-\rho(V)}\in\Z[x]$, where $\mu$ is the M{\"o}bius function on the subspace lattice $\cL(E)$ 
(see also \eqref{e-muWV} later in this paper). 
Thus, $\chi_{\cM}=\sum_{j=0}^n\sum_{\dim V=j}(-1)^jq^{\BinomS{j}{2}}x^{k-\rho(V)}$.
It is easy to verify (see also \cite[Rem.~3.6]{GLJ25CF}) that $\chi_\cM=(-1)^k\Omega_{\cG}(R_{\cM})$, where  
$\cG=(g_{i,j})_{i,j\in\N_0}$ and
\[
   g_{i,j}=q^{\BinomS{k+j-i}{2}}(-x)^i(-1)^j.
\]
For $q\rightarrow1$ this specializes to the well-known identity $\chi_M(x)=(-1)^kR_M(-x,-1)$ for matroids~$M$ of rank~$k$.
\end{rem}

The second instance of a monomial substitution is highly non-trivial and has been established, with very different terminology, in \cite{ByFu25}.

\begin{rem}\label{R-Tutte}
In \cite{ByFu25}, the authors introduce Tutte partitions of the subspace lattice of a \qM{}~$\cM$ (see also the precursor \cite{BCJ17}).
Such a partition, if it exists, leads to the notion of Tutte polynomial of~$\cM$.
It captures information about the intervals that make up the partition. 
The authors go on to show that the Tutte polynomial can be obtained from the Whitney function in a way that shows that 
the Tutte polynomial in fact does not depend on the choice of the Tutte partition. 
In particular, the Tutte polynomial always exists, but up to now it is unclear what information it captures in the absence of a 
Tutte partition.
We refer to \cite{ByFu25} for further details about Tutte partitions and the definition of the associated Tutte polynomial.
In \cite{ByFu25} the relation between the Tutte and Whitney polynomial is derived in form of convolution maps. 
As we will describe next, this can be expressed by way of monomial substitutions.
Let $\cM=(E,\rho)$ be a \qM{} on the ground space~$E$. 
Following \cite{ByFu25} we define 
\[
  \cA_{i,j}=\sum_{a=0}^i\sum_{b=0}^j\alpha(i,j;a,b)x^ay^b\ \text{ and }\ 
  \cB_{i,j}=\sum_{a=0}^i\sum_{b=0}^j\beta(i,j;a,b)x^ay^b\in\Z[x,y]
\]
where
\begin{align*}
    \alpha(i,j;a,b)&=\GaussianD{i}{a}_q\GaussianD{j}{b}_q q^{(i-a)(j-b)},\\
    \beta(i,j;a,b)&=(-1)^{(i-a)+(j-b)}\GaussianD{i}{a}_q\GaussianD{j}{b}_q q^{\BinomS{|(i-a)-(j-b)|}{2}}
                     \Big(1+q^{|(i-a)-(j-b)|}-q^{\max\{i-a,j-b\}}\Big),
\end{align*}
and set $\cA=(\cA_{i,j})_{i,j\in\N_0}$ and $\cB=(\cB_{i,j})_{i,j\in\N_0}$.
Then $\Omega_{\cA}\circ\Omega_{\cB}=\Omega_{\cB}\circ\Omega_{\cA}=\text{id}_{\Z[x,y]}$, that is $\Omega_{\cA}(\cB_{i,j})=x^iy^j=\Omega_{\cB}(\cA_{i,j})$, and
the convolutional relations between the Whitney function $R_\cM$ and Tutte polynomial~$T_{\cM}$ established in 
\cite[Prop.~67, Cor.~68]{ByFu25} can be rewritten as the monomial substitutions
\[
   T_{\cM}=\Omega_{\cB}(R_\cM)\ \text{ and }\ R_\cM=\Omega_{\cA}(T_\cM).
\]
This can be proven with the aid of the technical computations in \cite[Thm.~61 and its proof]{ByFu25} and some additional symmetric cases.
As illustrated in \cite[Rem.~63]{ByFu25}, for $q\rightarrow1$ (i.e., the matroid case) the above monomial substitutions specialize to the familiar identity $T_M=R_M(x-1,y-1)$ and its inverse.
\end{rem}

\section{Rank-Metric Codes and their Higher Supports and Weights}\label{S-RMCodesHigher}

In this section we will introduce rank-metric codes and various invariants. 
We first recall the analogous notions for Hamming-metric codes (i.e., block codes).

\begin{rem}\label{R-HammCodes}
The \textbf{Hamming support} of a vector $v\in\F^N$ is defined as $\suppHam(v)=\{i\in[N]\mid v_i\neq0\}$, and the \textbf{Hamming weight} 
is $\wtHam(v)=|\suppHam(v)|$. 
For any subspace $D\leq\F^N$ we set $\suppHam(D)=\bigcup_{v\in D}\suppHam(v)$ and $\wtHam(D)=|\suppHam(D)|$. 
Let $C\leq \F^N$ be an $[N,K]$-block code. For $t\in[K]_0,\,\Delta\subseteq[N]$, and $i\in[N]_0$ we set
      \begin{align*}
        A^{(t)}_{i,\text{H}}&=|\{D\leq C\mid \dim D=t,\ \wtHam(D)=i\}|,\\
        A^{(t)}_{\Delta,\text{H}}&=|\{D\leq C\mid \dim D=t,\ \suppHam(D)=\Delta\}|.
      \end{align*}
      We call $(A^{(t)}_{0,\text{H}},A^{(t)}_{1,\text{H}},,\ldots,A^{(t)}_{N,\text{H}})$ the $t$-th \textbf{higher Hamming-weight distribution} of~$C$ and
      $W^{(t)}_{C,\text{H}}=\sum_{i=0}^N A_{i,\text{H}}^{(t)}x^{N-i}y^i\in\Z[x,y]$ the $t$-th \textbf{higher Hamming-weight enumerator} of~$C$. 
      The list $\big(A^{(t)}_{\Delta,\text{H}}\big)_{\Delta\subseteq[N]}$ is called the $t$-th \textbf{higher Hamming-support distribution} of~$C$.
\end{rem}

Recall that we can associate the matroid~$M_G$ to a Hamming-metric code $C=\rs(G)$; see \cref{R-ClassMatr}.
In \cite[Thms.~3 and~4]{Bri10} it has been shown that (a) the matroid $M_G$ and the collection of all higher Hamming-support distributions of~$C$ 
determine each other, and  (b) the Whitney function $R_{M_G}$ and the collection of all higher Hamming-weight distributions of~$C$ determine each other.
In \cref{T-HigherqM} we will generalize these results to rank-metric codes and their associated \qM{}s.

\begin{defi}\label{D-RMCode}
Consider $\F_{q^m}^n$, which we may regard as an $\F$-vector space.
\begin{alphalist}
\item The \textbf{rank weight} of $v=(v_1,\ldots,v_n)\in\F_{q^m}^n$ is defined as $\wtrk(v)=\dim_{\F}\subspace{v_1,\ldots,v_n}$, where 
        $\subspace{v_1,\ldots,v_n}$ denotes the $\F$-subspace generated by $v_1,\ldots,v_n$.
\item Let $G\in\F_{q^m}^{k\times n}$. Then $\cC=\rs(G):=\{uG\mid u\in\F_{q^m}^k\}$ is called the \textbf{rank-metric code} generated by~$G$.
\item Let $\cC$ be a rank-metric code in $\F_{q^m}^n$. For $i\in[n]_0$ let $A_i=|\{v\in\cC\mid \wtrk(v)=i\}|$. 
        Then $W_\cC=\sum_{i=0}^n A_i x^{n-i}y^i\in\Z[x,y]$ is called the \textbf{rank-weight enumerator} of the code $\cC$, and 
        $\min\{i>0\mid A_i\neq0\}$ is the \textbf{minimum distance} of~$\cC$. 
\end{alphalist}
\end{defi}

For the sake of clarity we will occasionally use the following notation.

\begin{nota}\label{Nota1}
We write $V\leqq W$ (resp., $V\leqqm W$) if $V$ is an $\F$-subspace (resp., $\F_{q^m}$-subspace) of~$W$.
\end{nota}

The notions in \cref{D-RMCode} can be described with the aid of the matrix space~$\F^{m\times n}$.
Let $\psi:\F_{q^m}\longrightarrow\F^m$ be the coordinate map with respect to a fixed $\F$-basis of $\F_{q^m}$ (where we write the coordinate vectors as column vectors) and consider the $\F$-isomorphism
\begin{equation}\label{e-Psi}
   \Psi:\F_{q^m}^n\longrightarrow\F^{m\times n},\quad (v_1,\ldots,v_n)\longmapsto \begin{pmatrix}\psi(v_1)&\cdots&\psi(v_n)\end{pmatrix}.
\end{equation}
Note that $\F^{m\times n}$ is an $n$-dimensional $\F_{q^m}$-vector space with the scalar multiplication 
$aM:=\Psi(a\Psi^{-1}(M))$ for all $a\in\F_{q^m}$ and $M\in\F^{m\times n}$.
This turns~$\Psi$ into an $\F_{q^m}$-linear map.
In this notation, a rank-metric code is simply an $\F_{q^m}$-subspace of $\F^{m\times n}$.
Furthermore, for any $v\in\F_{q^m}^n$ we define the \textbf{support} of~$v$ as 
\begin{equation}\label{e-suppv}
    \supp(v)=\rs(\Psi(v))
\end{equation}
(recall the convention on $\rs(\cdot)$ from \cref{NotaIntro}). Then 
\begin{equation}\label{e-wtrkv}
    \wtrk(v)=\dim\supp(v)
\end{equation}
for all $v\in\F_{q^m}^n$. Note that $\supp(v)$ and thus $\wtrk(v)$ do not depend on the choice of basis for the coordinate map~$\psi$.
Furthermore, it is easy to see that 
\begin{equation}\label{e-PropSupp}
    \supp(\lambda v)=\supp(v)\ \text{ and }\ \supp(v+w)\leq\supp(v)+\supp(w)
\end{equation}
for all $v,w\in\F_{q^m}^n$ and $\lambda\in\F_{q^m}^*$; see also \cite[Prop.~2.3]{JuPe17}.

Recall from \cref{T-ReprqMatr} that we can associate a \qM{} $\cM_G$ to a matrix $G\in\F_{q^m}^{k\times n}$.
We can now give an alternative description of the rank function~$\rho$ of $\cM_G$.
This will be useful in the next section.

\begin{rem}\label{R-RankqPM}
Let $\cC=\rs(G)\leqqm\F_{q^m}^n$ be a rank-metric code and set $\tilde{\cC}=\Psi(\cC)\leq\F^{m\times n}$ (which is an $\F_{q^m}$-subspace of
$\F^{m\times n}$).
Let $\cM_G=(\F^n,\rho)$ be as in \cref{T-ReprqMatr} and note that~$\cM_G$ only depends on the code~$\cC$, but not on the choice of its generator matrix.
On the ground space~$\F^n$ consider the standard dot product and denote by~$V^\perp$ the associated orthogonal of the subspace $V\leqq \F^n$.
Then it is well-known (see \cite[Sec.~5]{JuPe18} or \cite[Lem.~4.2]{GLJ22Gen}) that for any $V\leqq\F^n$
\begin{equation}\label{e-rhoCtilde}
    \rho(V)=\frac{\dim_{\F}\tilde{\cC}-\dim_{\F}\tilde{\cC}(V^\perp)}{m},\ \text{ where }
    \tilde{\cC}(V^\perp)=\{M\in\tilde{\cC}\mid \rs(M)\leqq V^\perp\}.
\end{equation}
Since $\tilde{\cC}$ and $\tilde{\cC}(V^\perp)$ are $\F_{q^m}$-subspaces, the dimensions in the numerator above are multiples of~$m$ and 
the fraction is indeed an integer.
\end{rem}

We now define the rank-metric analogues of the  distributions introduced in \cref{R-HammCodes}. To do so, we generalize \eqref{e-suppv} and \eqref{e-wtrkv} to subspaces and define for any $T\leqqm\F_{q^m}^n$
\begin{equation}\label{e-SupportT}
   \supp(T)=\sum_{v\in T}\rs(\Psi(v)) \ \text{ and }\ \wtrk(T)=\dim\supp(T).
\end{equation}

\begin{defi}\label{D-Support}
Let $\cC\leq_{q^m}\F_{q^m}^n$ be a rank-metric code and $\dim_{\F_{q^m}}\cC=k$. For $t\in[k]_0$ define
\[
    \cD_t(\cC)=\{\cC'\leq_{q^m}\cC\mid \dim_{\F_{q^m}}\cC'=t\}.
\]
Furthermore, for $i\in[n]_0$ and for $V\leqq\F^n$ define 
\[
   A_i^{(t)}=\big|\{\cC'\in\cD_t(\cC)\mid \wtrk(\cC')=i\}\big|\ \; \text{ and }\ \;
   A_V^{(t)}=\big|\{\cC'\in\cD_t(\cC)\mid \supp(\cC')=V\}\big|.
\]
We call $(A_0^{(t)},\ldots,A_n^{(t)})$ the \textbf{$t$-th higher rank-weight distribution},
and the polynomial $W_{\cC}^{(t)}=\sum_{i=0}^nA_i^{(t)}x^{n-i}y^i\in\Z[x,y]$ is the  \textbf{$t$-th higher rank-weight enumerator} of~$\cC$.
Furthermore, $\big(A_V^{(t)}\big)_{V\leq\F^n}$ is the \textbf{$t$-th higher rank-support distribution} of~$\cC$.
Except for \cref{S-Proj}, we will skip the word `rank'  and simply use `higher weight distribution' and `higher support distribution'.
\end{defi}

Note that $(A^{(0)}_0,\ldots,A^{(0)}_n)=(1,0,\ldots,0)$ and $A_i^{(1)}=A_i/(q^m-1)$ for all $i>0$, where~$A_i$ are the weight enumerators 
introduced in \cref{D-RMCode}(c).

The distributions defined in \cref{R-HammCodes} and \cref{D-Support} appear in the literature under various names.
Consider first rank-metric codes.
In \cite[Def.~2.4]{JuPe17} the invariants $A_i^{(t)}$ are denoted by $A_W^{R,t}$, and $W_{\cC}^{(t)}$  is called 
the $t$-th generalized rank weight enumerator.
On the other hand, in \cite[Def.~2.5]{JuPe17} and \cite[Def.~5.4]{Gor21} the $t$-th generalized weight of~$\cC$ is defined as the minimum~$i>0$ for 
which $A_i^{(t)}\neq0$. 
For Hamming-metric codes, the higher Hamming-support distribution is called support weight distribution in \cite[p.~166]{Ba97} and subcode support multiplicities in \cite[p.~4351 and Thm.~3]{Bri10}.
The higher Hamming-weight enumerator corresponds to the higher weight enumerator in \cite[p.~4350]{Bri10} and the generalized weight 
enumerator in \cite[Sec.~5.5.1]{JuPe13}.

\section{$q$-Matroids and Higher Distributions}\label{S-WhitHigh}

In this section we will restrict ourselves to representable \qM{}s. 
We will see that the \qM{} associated to a rank-metric code carries the same information as the collection of all higher support distributions 
of the code, while the Whitney function carries the same information as the collection of all higher weight enumerators.
In fact, these invariants are related via monomial substitutions. 
Because of the $\Z$-linearity of the relations, it appears most convenient to use a matrix-theoretical approach, as in \cite{Bri10} for Hamming-metric codes.

Recall the M{\"o}bius function on the subspace lattice $\cL(E)$ given by 
\begin{equation}\label{e-muWV}
    \mu(W,V)=\left\{\begin{array}{cl} (-1)^{v-w}q^{\BinomS{v-w}{2}},&\text{if }W\leq V,\\[.5ex] 0,&\text{otherwise,}\end{array}\right\},\quad
    \text{ where }v=\dim V,\ w=\dim W.
\end{equation}
We also remind of the Kronecker-$\delta$ function from \cref{NotaIntro}.

\begin{defi}\label{D-Matrices}
Let $\cM=(E,\rho)$ be a \qM{} of rank~$k$ on an $n$-dimensional ground space~$E$.
Let $\nu_{i,j}$ be as in \eqref{e-WhitExpand}, \eqref{e-WhitCoeff} and set $\nu_{i,b}:=0\text{ for }b\not\in\{0,\ldots,n-k\}$.
Set $\hat{n}=|\cL(E)|=\sum_{i=0}^n\Gaussian{n}{i}_q$ and fix some $m\in\N$. 
Moreover, choose a non-degenerate symmetric bilinear form (NSBF) on~$E$.
We define the matrices
\begin{align*}
     &&P&=\big(\delta_{W,V^\perp}\big)_{W,V\in\cL(E)}\in\Z^{\hat{n}\times\hat{n}}, &&
          M=\big(\mu(W,V)\big)_{V,W\in\cL(E)}\in\Z^{\hat{n}\times\hat{n}},\\[.7ex]
     &&F&=\Big((-1)^{i-j}q^{\BinomS{i-j}{2}}\Gaussian{n-j}{i-j}_q\Big)_{i,j\in[n]_0}\in\Z^{(n+1)\times(n+1)}, &&
          \,Q=\Big(\Gaussian{t}{j}_{q^m}\Big)_{t,j\in[k]_0}\in\Z^{(k+1)\times(k+1)},\\[.7ex]
     &&N&=\big(\nu_{i,n-k+i-j}\big)_{i\in[k]_0\atop j\in[n]_0}\in\Z^{(k+1)\times(n+1)}, &&
          \,D=\big(\delta_{i,\dim V}\big)_{i\in[n]_0\ \atop V\in\cL(E)}\in\Z^{(n+1)\times\hat{n}},\\[.7ex]
     &&K&=\big(\delta_{k-\rho(V),\,j})_{V\in\cL(E)\atop j\in[k]_0\ }\in\Z^{\hat{n}\times(k+1)}.&&
\end{align*}
\end{defi}

Some comments are in order.
First, only the matrix~$P$ depends on the chosen NSBF.
Second, the matrix~$K$ contains the same information as the rank function~$\rho$ because $K_{V,j}=1$ iff $\rho(V)=k-j$.
Finally, the matrix~$N$ carries exactly the same information as the Whitney function. 
This will also be made precise in \cref{P-MatRelations}(a) below.

\begin{prop}\label{P-Invertible}
The matrices $P,M,F,$ and~$Q$ are invertible over~$\Z$. 
Precisely, $P^{-1}=P$ and 
\[
   M^{-1}=\big(\hat{M}_{V,W}\big)_{V,W},\ \;  Q^{-1}=\Big((-1)^{j-t}q^{m\BinomS{j-t}{2}}\Gaussian{j}{t}_{q^m}\Big)_{j,t},\ \;
   F^{-1}=\Big(\Gaussian{n-j}{i-j}_q\Big)_{i,j},
\]
where $\hat{M}_{V,W}=1$ if $W\leq V$ and $\hat{M}_{V,W}=0$ otherwise.
\end{prop}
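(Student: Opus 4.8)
The plan is to verify each of the four claimed inverses by a direct matrix product, so that the statement splits into four essentially independent computations, each governed by a standard inversion principle. For $P$, the point is that the chosen form is a NSBF, so $V\mapsto V^\perp$ is an involution of $\cL(E)$, i.e.\ $(V^\perp)^\perp=V$. Expanding $(P^2)_{W,U}=\sum_{V\in\cL(E)}\delta_{W,V^\perp}\delta_{V,U^\perp}$, the only possibly nonzero term occurs when $V=W^\perp$ and $V=U^\perp$ simultaneously; this forces $W^\perp=U^\perp$ and hence $W=U$. Thus $P^2=I$ and $P^{-1}=P$.

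For $M$, I would recognize the candidate inverse $\hat{M}=(\hat{M}_{V,W})$ as the matrix recording the incidences $W\le V$ (i.e.\ a transpose of the zeta matrix of $\cL(E)$), and recall that the expression in \eqref{e-muWV} is precisely the M\"obius function of the subspace lattice. Writing $M_{V,W}=\mu(W,V)$ and noting that $\hat{M}_{W,U}=1$ exactly when $U\le W$, one gets $(M\hat{M})_{V,U}=\sum_{W}\mu(W,V)\hat{M}_{W,U}=\sum_{U\le W\le V}\mu(W,V)$, where the range restriction uses that $\mu(W,V)=0$ unless $W\le V$. This last sum equals $\delta_{U,V}$ by the defining recursion of the M\"obius function, giving $M\hat{M}=I$. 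The only care needed is to keep the row/column indices straight, since $M$ records $\mu$ with its arguments swapped relative to the indexing.

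For $F$ and $Q$, both matrices are lower triangular, and the two verifications reduce to one and the same $q$-binomial inversion (for $Q$ with $q$ replaced by $q^m$). Concretely, for $(FF^{-1})_{i,\ell}$ I would substitute $a=i-\ell$, $r=j-\ell$, and $N'=n-\ell$, turning the entry into $\sum_{r}(-1)^{a-r}q^{\BinomS{a-r}{2}}\Gaussian{N'-r}{a-r}_q\Gaussian{N'}{r}_q$. Applying the subspace-of-a-subspace identity $\Gaussian{N'}{r}_q\Gaussian{N'-r}{a-r}_q=\Gaussian{N'}{a}_q\Gaussian{a}{r}_q$ factors out $\Gaussian{N'}{a}_q$ and leaves the inner sum, which after the reindexing $s=a-r$ and the symmetry of the Gaussian binomial becomes $\sum_{s=0}^{a}(-1)^s q^{\BinomS{s}{2}}\Gaussian{a}{s}_q$; by the Gaussian binomial theorem this equals $\prod_{i=0}^{a-1}(1-q^i)=\delta_{a,0}$. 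Hence $(FF^{-1})_{i,\ell}=\Gaussian{N'}{a}_q\,\delta_{a,0}=\delta_{i,\ell}$. The computation for $Q$ is identical after the analogous factorization $\Gaussian{t}{j}_{q^m}\Gaussian{j}{s}_{q^m}=\Gaussian{t}{s}_{q^m}\Gaussian{t-s}{j-s}_{q^m}$.

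I expect the main obstacle to be purely the $q$-binomial bookkeeping in the $F$ and $Q$ cases: tracking the signs, the $q^{\BinomS{\cdot}{2}}$ prefactors, and the index shifts accurately so that the sums collapse via the Gaussian binomial theorem. Everything else is routine once the two factorization identities and the indexing conventions are pinned down, so each of the four products is a short, self-contained calculation.
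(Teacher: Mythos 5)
Your proposal is correct and follows essentially the same route as the paper: direct verification of each matrix product, using the involutivity of $\perp$ for $P$, the M\"obius/zeta duality for $M$, and the $q$-binomial orthogonality relation for $Q$ and $F$ (the paper handles $F$ by conjugating with the antidiagonal permutation to reduce to the same orthogonality identity you verify by index substitution, which is only a cosmetic difference).
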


\begin{proof}
First of all,  $(PP)_{V,W}=\sum_Z \delta_{V,Z^\perp} \delta_{Z,W^\perp}=\delta_{V,W}$ and thus $P=P^{-1}$.
Next, note that $\hat{M}_{V,W}$ is the $\zeta$-function on $\cL(E)$, which is the inverse of the M\"obius function, and we compute
$(\hat{M}M)_{V,W}=\sum_{W\leq Z\leq V}\hat{M}_{V,Z}M_{Z,W}=\delta_{V,W}$.
The statement about $Q^{-1}$ follows from the orthogonality relations for $q$-binomial coefficients (applied to $q^m$). Indeed,
\[
   (QQ^{-1})_{t,i}=
   \sum_{j=0}^{k}(-1)^{j-i}q^{m\BinomS{j-i}{2}}\GaussianD{t}{j}_{q^m}\GaussianD{j}{i}_{q^m}
   =\sum_{j=i}^{t}(-1)^{j-i}q^{m\BinomS{j-i}{2}}\GaussianD{t}{j}_{q^m}\GaussianD{j}{i}_{q^m}=\delta_{t,i};
\]
see also \cite[Identity~(12)]{Bri10}.
Finally, with
\[
    U=\begin{pmatrix} & &1\\ &\iddots& \\ 1& &\end{pmatrix}\in\Z^{(n+1)\times(n+1)}
\]
we obtain $(UFU)_{a,b}=(-1)^{b-a}q^{\BinomS{b-a}{2}}\Gaussian{b}{a}$ and thus $(UFU)^{-1}=\big(\Gaussian{b}{a}\big)_{a,b}$ by the orthogonality relations.
Thus $F^{-1}=U(\Gaussian{b}{a})_{a,b}U=\big(\Gaussian{n-b}{n-a}\big)_{a,b}$, as desired.
\end{proof}

\begin{prop}\label{P-MatRelations}\
\begin{alphalist}
\item $uN\T v\T=R_{\cM}(x,y)$, where  $u=(y^n,y^{n-1},\ldots,1)$ and $v=(y^{-k},xy^{-k+1},x^2y^{-k+2},\ldots,x^{k})$.
\item $DPK=N\T$.
\item $FD=DM$.
\end{alphalist}
\end{prop}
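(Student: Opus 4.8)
The plan is to prove all three identities by computing matrix entries directly against the definitions in \cref{D-Matrices}: in each case the matrix product either collapses to a single term or to a count of subspaces, so there is no real analysis to do beyond unwinding conventions. The only non-formal inputs are two standard $q$-analogue facts: the number of $i$-dimensional subspaces of~$E$ containing a fixed $d$-dimensional subspace equals $\Gaussian{n-d}{i-d}_q$ (vanishing when $i<d$), and the explicit Möbius values $\mu(V,W)=(-1)^{\dim W-\dim V}q^{\BinomS{\dim W-\dim V}{2}}$ for $V\leq W$ from \eqref{e-muWV}. I would state both at the outset and then treat (a), (b), (c) independently.

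For (a), I would write $u=(u_j)_{j\in[n]_0}$ with $u_j=y^{n-j}$ and $v=(v_i)_{i\in[k]_0}$ with $v_i=x^iy^{i-k}$, so that, using $(N\T)_{j,i}=N_{i,j}=\nu_{i,\,n-k+i-j}$,
\[
   u\,N\T v\T=\sum_{j=0}^n\sum_{i=0}^k y^{n-j}\,\nu_{i,\,n-k+i-j}\,x^iy^{i-k}.
\]
The substitution $b=n-k+i-j$ gives $n-j=b+k-i$, so the two $y$-powers combine to $y^b$ and each term becomes $\nu_{i,b}x^iy^b$. As~$j$ ranges over $[n]_0$ the new index~$b$ sweeps all integers, but $\nu_{i,b}=0$ unless $0\leq b\leq n-k$ by the convention fixed in \cref{D-Matrices}; hence the sum equals $\sum_{i=0}^k\sum_{b=0}^{n-k}\nu_{i,b}x^iy^b=R_\cM$ as in \eqref{e-WhitExpand}.

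For (b), the point is that~$P$ acts as the rule ``replace~$V$ by~$V^\perp$'': the factor $\delta_{W,V^\perp}$ forces $W=V^\perp$, and since the NSBF is non-degenerate, $\dim V^\perp=n-\dim V$. Thus, for $i\in[n]_0$ and $j\in[k]_0$,
\[
   (DPK)_{i,j}=\sum_{V}\delta_{i,\dim V^\perp}\,\delta_{k-\rho(V),\,j}=\big|\{V\leq E\mid \dim V=n-i,\ \rho(V)=k-j\}\big|.
\]
On the other side $(N\T)_{i,j}=N_{j,i}=\nu_{j,\,n-k+j-i}$, which by \eqref{e-WhitCoeff} counts exactly the subspaces with $\rho(V)=k-j$ and $\dim V=(k-j)+(n-k+j-i)=n-i$. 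The two sets coincide, so the matrices agree entrywise. For (c), the $(i,V)$-entry of $FD$ is simply $F_{i,\dim V}=(-1)^{i-\dim V}q^{\BinomS{i-\dim V}{2}}\Gaussian{n-\dim V}{i-\dim V}_q$, whereas $(DM)_{i,V}=\sum_{\dim W=i}M_{W,V}=\sum_{\dim W=i}\mu(V,W)$. Only the $W\geq V$ contribute, each with the common value $\mu(V,W)=(-1)^{i-\dim V}q^{\BinomS{i-\dim V}{2}}$, and there are $\Gaussian{n-\dim V}{i-\dim V}_q$ such~$W$; the product reproduces $F_{i,\dim V}$ (with both sides vanishing when $i<\dim V$, since the Gaussian binomial is then zero).

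I do not expect a genuine obstacle: the entire argument is bookkeeping against the definitions and the two stated identities. The one place demanding care is keeping the orientation conventions consistent, namely that $M_{W,V}=\mu(V,W)$ (so the displayed matrix is the transpose of the naive Möbius array), that the column index of~$N$ carries the shift $n-k+i-j$, and that~$P$ reverses dimensions via $d\mapsto n-d$. It is precisely the agreement of these three conventions that makes (a)--(c) line up, so I would flag them explicitly rather than risk a silent index error.
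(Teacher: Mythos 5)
Your proposal is correct and follows essentially the same route as the paper: direct entrywise computation against the definitions, using the count $\Gaussian{n-v}{i-v}_q$ of $i$-dimensional overspaces of a fixed $v$-dimensional subspace and the explicit M\"obius values for part (c), and the non-degeneracy of the bilinear form ($\dim V^\perp=n-\dim V$) for part (b). You even supply the index-shift computation for part (a), which the paper omits as "easily verified", and your explicit flagging of the convention $M_{W,V}=\mu(V,W)$ matches the paper's usage.
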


\begin{proof}
(a) can easily be verified.
(b) $(DP)_{i,V}=\sum_{W\leq E}\delta_{i,\dim W}\delta_{W,V^\perp}$, which is~$1$ iff $\dim V=n-i$ and~$0$ otherwise.
Thus we obtain
\begin{align*}
   (DPK)_{i,j}&=\sum_{V\leq E}(DP)_{i,V}K_{V,j}=\sum_{\dim V=n-i}\delta_{k-\rho(V),j}\\
   &=\big|\{V\leq E\mid \dim V=n-i,\,\rho(V)=k-j\}\big|=\nu_{j,n-k+j-i}=N_{j,i}=(N\T)_{i,j}.
\end{align*}
(c) Let $V\leq E$ and $\dim V=v$. We compute
$(FD)_{i,V}=\sum_{j=0}^n(-1)^{i-j}q^{\BinomS{i-j}{2}}\Gaussian{n-j}{i-j}_q\delta_{j, v}=(-1)^{i-v}q^{\BinomS{i-v}{2}}\Gaussian{n-v}{i-v}_q$.
On the other hand,
$(DM)_{i,V}=\sum_{W}\delta_{i,\dim W}\mu(V,W)=\sum_{\dim W=i}\mu(V,W)=\sum_{\dim W=i\atop V\leq W}(-1)^{i-v}q^{\BinomS{i-v}{2}}=
   (-1)^{i-v}q^{\BinomS{i-v}{2}}\Gaussian{n-v}{n-i}_q$,
which proves the desired identity.
\end{proof}

Now we turn to representable \qM{}s. 
The following theorem shows for a rank-metric code $\cC=\rs(G)$ and its associated \qM{} $\cM_G$ that 
(a) the collection of all higher support distributions of $\cC$ and the \qM{} $\cM_G$ determine each other, and 
(b) the collection of all higher weight distributions of~$\cC$ and the Whitney function of $\cM_G$ determine each other.
These are the $q$-analogues of \cite[Thms.~3 and~4]{Bri10}. 

\begin{theo}\label{T-HigherqM}
Let $G\in\F_{q^m}^{k\times n}$ be of rank~$k$ and $\cC=\rs(G)$. 
Let $\cM_G=(\F^n,\rho)$ be the associated \qM{} as in \cref{T-ReprqMatr}.
Define
\[
   A=\big(A_i^{(t)}\big)_{i\in[n]_0,t\in[k]_0}\in\Z^{(n+1)\times(k+1)}\ \text{ and }\  S=\big(A_V^{(t)}\big)_{V\in\cL(\F^n),t\in[k]_0}\in\Z^{\hat{n}\times(k+1)},
\]
where $A_i^{(t)}$ and $A_V^{(t)}$ are as in \cref{D-Support}.
As NSBF on~$\F^n$ choose the standard dot product and consider the matrices in \cref{D-Matrices}. Then
\begin{alphalist}
\item  $S=MPKQ$ and thus $K=PM^{-1}SQ^{-1}$.
           In other words, the \qM{} $\cM_G$ (i.e., the matrix~$K$) and the collection of all higher support distributions (i.e., the matrix~$S$) 
           determine each other.
\item $A=FN\T Q$ and thus $N\T=F^{-1}AQ^{-1}$. 
           In other words, the Whitney function (i.e., the matrix~$N$) and the collection of all higher weight distributions (i.e., the matrix~$A$) determine each other.
\end{alphalist}
\end{theo}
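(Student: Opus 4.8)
The plan is to prove part~(a) first by establishing a single combinatorial identity that encodes the entire higher support distribution, and then to deduce part~(b) purely formally from~(a) together with the matrix relations in \cref{P-MatRelations}. The inverse formulas in both parts are then immediate from the invertibility statements in \cref{P-Invertible}, using $P^{-1}=P$ as well as the invertibility of $M,F,$ and $Q$.

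For part~(a), the heart of the argument is to count, for each $W\in\cL(\F^n)$ and each~$t$, the number $B_W^{(t)}$ of $t$-dimensional subcodes $\cC'\leqqm\cC$ whose support is contained in~$W$. First I would observe that $\supp(\cC')\leq W$ holds iff $\supp(v)\leq W$ for every $v\in\cC'$, since $\supp(\cC')=\sum_{v\in\cC'}\supp(v)$ by \eqref{e-SupportT}; equivalently, $\cC'$ is a subcode of $\cC(W):=\{v\in\cC\mid\supp(v)\leq W\}$, which is an $\F_{q^m}$-subspace by \eqref{e-PropSupp}. The key input is \eqref{e-rhoCtilde}: transporting it through~$\Psi$ (so that $\Psi(\cC(W))=\tilde{\cC}(W)$) and setting $V=W^\perp$ gives $\dim_{\F_{q^m}}\cC(W)=k-\rho(W^\perp)$. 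Hence the $t$-dimensional subcodes with support in~$W$ are exactly the $t$-dimensional $\F_{q^m}$-subspaces of a $(k-\rho(W^\perp))$-dimensional space, so
\[
   B_W^{(t)}=\Gaussian{k-\rho(W^\perp)}{t}_{q^m}.
\]

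With this identity in hand, part~(a) becomes a matrix manipulation. A direct computation shows $B=(B_W^{(t)})_{W,t}=PKQ$: indeed $(PK)_{W,j}=\delta_{k-\rho(W^\perp),j}$, so that $(PKQ)_{W,t}=\Gaussian{k-\rho(W^\perp)}{t}_{q^m}$. On the other hand, $B_W^{(t)}=\sum_{U\leq W}A_U^{(t)}$, which in matrix form reads $B=M^{-1}S$ because $M^{-1}$ is exactly the $\zeta$-matrix $(\hat{M}_{W,U})_{W,U}$ of \cref{P-Invertible}; this is Möbius inversion on $\cL(\F^n)$. Combining the two expressions for~$B$ yields $S=MB=MPKQ$, and solving for~$K$ gives $K=PM^{-1}SQ^{-1}$.

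Part~(b) then follows formally. Summing the support distribution over subspaces of fixed dimension gives $A=DS$, since $\wtrk(\cC')=\dim\supp(\cC')$ forces $A_i^{(t)}=\sum_{\dim V=i}A_V^{(t)}$. Substituting $S=MPKQ$ and invoking the relations $FD=DM$ and $DPK=N\T$ from \cref{P-MatRelations}(c),(b) gives
\[
   A=DS=(DM)PKQ=F(DPK)Q=FN\T Q,
\]
and the inverse $N\T=F^{-1}AQ^{-1}$ is immediate. The main obstacle is the combinatorial identity for $B_W^{(t)}$: everything else is bookkeeping, but that step requires correctly matching the support condition against the orthogonal complement appearing in \eqref{e-rhoCtilde} and verifying that $\cC(W)$ has the claimed $\F_{q^m}$-dimension, so that the count collapses to a single Gaussian binomial coefficient.
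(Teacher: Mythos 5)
Your proposal is correct and follows essentially the same route as the paper: the key identity $\sum_{W\leq V}A_W^{(t)}=\Gaussian{k-\rho(V^\perp)}{t}_{q^m}$ (your $B_V^{(t)}$), obtained from \eqref{e-rhoCtilde}, is exactly the content of \cref{L-qMSupp}, and part~(b) is derived identically via $A=DS$ and \cref{P-MatRelations}. The only (cosmetic) difference is that you package the M\"obius inversion as the matrix identity $S=MB$ with $B=PKQ$, whereas the paper first inverts to get an explicit formula for $A_V^{(t)}$ and then verifies $(MPKQ)_{V,t}=A_V^{(t)}$ entrywise.
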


For the proof we first establish the following lemma.

\begin{lemma}\label{L-qMSupp}
Let the data be as in \cref{T-HigherqM}. For all $V\leq\F^n$ and $t\in[k]_0$ we have
\[
     A_V^{(t)}=\sum_{W\leq V}(-1)^{v-w}q^{\BinomS{v-w}{2}}\GaussianD{k-\rho(W^\perp)}{t}_{q^m}
     =\sum_{j=t}^k\sum_{W\leq V}(-1)^{v-w}q^{\BinomS{v-w}{2}}\GaussianD{j}{t}_{q^m}\delta_{k-\rho(W^\perp),j}
\]
where $v=\dim V$ and $w=\dim W$.
\end{lemma}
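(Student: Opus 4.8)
The plan is to establish the first equality by Möbius inversion on the subspace lattice $\cL(\F^n)$ and to read off the second from the expansion $\GaussianD{k-\rho(W^\perp)}{t}_{q^m}=\sum_{j=t}^k\GaussianD{j}{t}_{q^m}\,\delta_{k-\rho(W^\perp),j}$, which is purely formal. Thus the content lies entirely in the first identity.

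First I would introduce, for each $W\leq\F^n$ and $t\in[k]_0$, the cumulative count
\[
   B_W^{(t)}=\big|\{\cC'\in\cD_t(\cC)\mid \supp(\cC')\leq W\}\big|,
\]
counting $t$-dimensional subcodes whose support is \emph{contained in}~$W$ rather than exactly~$W$. Since $\supp(\cC')$ is always an $\F$-subspace of~$\F^n$ by \eqref{e-SupportT}, each such $\cC'$ contributes to exactly one $A_W^{(t)}$ with $W=\supp(\cC')\leq V$, giving $B_V^{(t)}=\sum_{W\leq V}A_W^{(t)}$. Möbius inversion on $\cL(\F^n)$, using the Möbius function~\eqref{e-muWV}, then yields
\[
   A_V^{(t)}=\sum_{W\leq V}\mu(W,V)\,B_W^{(t)}=\sum_{W\leq V}(-1)^{v-w}q^{\BinomS{v-w}{2}}\,B_W^{(t)},
\]
so it suffices to show $B_W^{(t)}=\GaussianD{k-\rho(W^\perp)}{t}_{q^m}$.

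For this I would pass to the matrix model and set $\tilde{\cC}=\Psi(\cC)\leq\F^{m\times n}$. For a subcode $\cC'\leqqm\cC$, the condition $\supp(\cC')\leq W$ unwinds---using that a sum of $\F$-subspaces lies in~$W$ iff each summand does---to $\rs(\Psi(v))\leq W$ for all $v\in\cC'$, i.e.\ to $\Psi(\cC')\leqqm\tilde{\cC}(W)$, where $\tilde{\cC}(W)=\{M\in\tilde{\cC}\mid\rs(M)\leq W\}$ in the notation of \cref{R-RankqPM}. As $\Psi$ is an $\F_{q^m}$-isomorphism carrying $\cC$ onto $\tilde{\cC}$, the quantity $B_W^{(t)}$ equals the number of $t$-dimensional $\F_{q^m}$-subspaces of $\tilde{\cC}(W)$. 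Substituting $V=W^\perp$ into \eqref{e-rhoCtilde} and using $\dim_\F\tilde{\cC}=mk$ gives $\dim_{\F_{q^m}}\tilde{\cC}(W)=k-\rho(W^\perp)$, and the standard count of $t$-dimensional subspaces of an $\F_{q^m}$-space of that dimension produces the Gaussian binomial $\GaussianD{k-\rho(W^\perp)}{t}_{q^m}$.

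The main obstacle, such as it is, is the bookkeeping with the orthogonal in this last step: one has to check that ``$\supp(\cC')\leq W$'' corresponds to membership in $\tilde{\cC}(W)$, whose dimension is expressed through~$\rho$ only after the single substitution $V=W^\perp$ in \eqref{e-rhoCtilde}, so that the correct value $k-\rho(W^\perp)$ (rather than $k-\rho(W)$) emerges. The remaining ingredients---the containment-counting identity and the Möbius inversion---are routine.
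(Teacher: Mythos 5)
Your proposal is correct and follows essentially the same route as the paper: both compute the cumulative count $\sum_{W\leq V}A_W^{(t)}$ by identifying it with the number of $t$-dimensional $\F_{q^m}$-subspaces of $\tilde{\cC}(V)$, evaluate that dimension as $k-\rho(V^\perp)$ via \eqref{e-rhoCtilde}, and then apply M\"obius inversion on $\cL(\F^n)$. The only cosmetic difference is that you name the cumulative count $B_W^{(t)}$ where the paper writes it as $f(V)$ directly.
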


\begin{proof}
Fix $V\leq\F^n$ and $t\in[k]_0$.
In order to prove the first identity, we start with
\[
   \sum_{W\leq V}A_W^{(t)}=\sum_{W\leq V}\big|\{\cC'\in\cD_t(\cC)\mid \supp(\cC')=W\}\big|
                =\big|\{\cC'\in\cD_t(\cC)\mid \supp(\cC')\leq V\}\big|.
\]
The set $\{\cC'\in\cD_t(\cC)\mid \supp(\cC')\leq V\}$ equals $\cD_t(\Psi^{-1}(\tilde{\cC}(V)))$, where $\tilde{\cC}$ and $\tilde{\cC}(V)$ are 
as in  \cref{R-RankqPM} (here we need the standard dot product on~$\F^n$ as NSBF).
Thus its cardinality is the number of $t$-dimensional $\F_{q^m}$-subspaces of 
$\tilde{\cC}(V)$.
Setting $d=\dim_{\F_q}\tilde{\cC}(V)$ and using \eqref{e-rhoCtilde}, we arrive at
\[
    \sum_{W\leq V}A_W^{(t)}=\GaussianD{\dim_{\F_{q^m}}\tilde{\cC}(V)}{t}_{q^m}=\GaussianD{d/m}{t}_{q^m}
    =\GaussianD{k-\rho(V^\perp)}{t}_{q^m}=:f(V).
\]
Using M\"obius inversion we obtain
\[
  A_V^{(t)}=\sum_{W\leq V}\mu(W,V)f(W)= \sum_{W\leq V}(-1)^{v-w}q^{\BinomS{v-w}{2}}\GaussianD{k-\rho(W^\perp)}{t}_{q^m},
\]
where $w=\dim W$. 
This establishes the first identity. The second one is clear.
\end{proof}

\textit{Proof of \cref{T-HigherqM}:}
\\
(a) Using lower case letters for the dimension of spaces with corresponding upper case letters, we have
\begin{align*}
  (MP)_{V,Z^\perp}&=\sum_W \mu(W,V)\delta_{W,Z}=\sum_{W\leq V}(-1)^{v-w}q^{\BinomS{v-w}{2}}\delta_{W,Z}
          =\left\{\begin{array}{cl} (-1)^{v-z}q^{\BinomS{v-z}{2}},&\text{if }Z\leq V,\\ 0,&\text{otherwise}\end{array}\right.\\[1ex]
   (KQ)_{Z^\perp,t}&=\sum_{j=0}^k\delta_{k-\rho(Z^\perp),j}\GaussianD{j}{t}_{q^m}.
\end{align*}
Thus,
$(MPKQ)_{V,t}=\sum_{Z\leq V}\sum_{j=0}^k(-1)^{v-z}q^{\BinomS{v-z}{2}}\Gaussian{j}{t}_{q^m}\delta_{k-\rho(Z^\perp),j}=A_V^{(t)}$,
where the last step follows from \cref{L-qMSupp}.
The rest follows from \cref{P-Invertible}.
\\
(b) With~$D$ as in \cref{D-Matrices}, we have
$(DS)_{i,t}=\sum_{V\leq\F^n}\delta_{i,\dim V}A_V^{(t)}=\sum_{\dim V=i}A_V^{(t)}=A_i^{(t)}$. 
Thus $DS=A$, and  with \cref{P-MatRelations}  and Part~(a) we compute $A=DS=DMPKQ=FDPKQ=FN\T Q$.
Again, the rest follows from \cref{P-Invertible}.
\hfill $\square$

We can rephrase the identity in \cref{T-HigherqM}(b) as monomial substitutions.

\begin{cor}\label{C-SubA}
Let the data be as in \cref{T-HigherqM}.
For $t\in[k]_0$ consider the $t$-th higher weight enumerator $W^{(t)}_{\cC}\in\Z[x,y]$.
\begin{alphalist}
\item $W^{(t)}_{\cC}=\Omega_{\cU^{(t)}}(R_\cM)$, where $\cU^{(t)}=(u_{a,c}^{(t)})_{a,c\in\N_0}$ and 
      \[
         u_{a,c}^{(t)}=\sum_{i=0}^n (-1)^{i-n+k-a+c}q^{\BinomS{i-n+k-a+c}{2}}\GaussianD{k-a+c}{n-i}_q\GaussianD{a}{t}_{q^m} x^{n-i}y^i
         \ \ \text{ if }a\in[k]_0,\,c\in[n-k]_0
      \]
      and $u_{a,c}^{(t)}=0$ otherwise.
\item $R_\cM=\sum_{t=0}^k\Omega_{\widehat{U}^{(t)}}(W^{(t)}_{\cC})$, where 
      $\widehat{\cU}^{(t)}=(\hat{u}^{(t)}_{i,j})_{i,j\in\N_0}$ and 
      \[
        \hat{u}^{(t)}_{i,j}=\sum_{a=0}^k\sum_{c=0}^{n-k}(-1)^{t-a}q^{m\BinomS{t-a}{2}}\GaussianD{t}{a}_{q^m}\GaussianD{n-j}{k+c-a}_qx^ay^c
        \ \ \text{ if }n-i=j\in[n]_0
      \]
      and $\hat{u}^{(t)}_{i,j}=0$ otherwise. 
\end{alphalist}
\end{cor}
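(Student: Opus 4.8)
The plan is to obtain both identities by reading the $\Z$-linear matrix identities of \cref{T-HigherqM}(b), namely $A=FN\T Q$ and $N\T=F^{-1}AQ^{-1}$, entrywise as statements about the coefficients of $W^{(t)}_{\cC}$ and $R_\cM$. The point is that a monomial substitution $\Omega_{\cU}$ (\cref{D-MonomSub}) acts $\Z$-linearly on coefficient vectors in exactly the way a matrix acts, so once the bookkeeping between exponents and matrix indices is fixed, the two formulations coincide. I would record the dictionary first: by \eqref{e-WhitExpand} we have $R_\cM=\sum_{a=0}^k\sum_{c=0}^{n-k}\nu_{a,c}x^ay^c$, and by \cref{D-Matrices} the entry $(N\T)_{p,l}$ equals $N_{l,p}=\nu_{l,\,n-k+l-p}$; hence the monomial $x^ay^c$ of $R_\cM$ corresponds to the matrix position $(p,l)=(n-k+a-c,\,a)$, i.e.\ $p=n-k+a-c$ with $l=a$. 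Likewise the coefficient of $x^{n-i}y^i$ in $W^{(t)}_\cC$ is $A_i^{(t)}=A_{i,t}$.

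For part~(a) I would expand $A_i^{(t)}=\sum_{p,l}F_{i,p}(N\T)_{p,l}Q_{l,t}$ using the entries from \cref{D-Matrices}, then re-index through $p=n-k+a-c$, $l=a$ so that $(N\T)_{p,l}$ becomes $\nu_{a,c}$. The resulting coefficient of $\nu_{a,c}$ is $(-1)^{i-n+k-a+c}q^{\BinomS{i-n+k-a+c}{2}}\Gaussian{k-a+c}{i-n+k-a+c}_q\Gaussian{a}{t}_{q^m}$, and applying the Gaussian-binomial symmetry $\Gaussian{N}{K}_q=\Gaussian{N}{N-K}_q$ to rewrite the inner bracket as $\Gaussian{k-a+c}{n-i}_q$ turns this into precisely the $x^{n-i}y^i$-coefficient of the claimed $u^{(t)}_{a,c}$. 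Summing over $i$ and comparing with $W^{(t)}_\cC=\sum_i A_i^{(t)}x^{n-i}y^i$ then yields $W^{(t)}_\cC=\Omega_{\cU^{(t)}}(R_\cM)$. Setting $u^{(t)}_{a,c}=0$ outside $a\in[k]_0,\,c\in[n-k]_0$ is harmless, since $\nu_{a,c}=0$ there and $R_\cM$ carries no such monomials.

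Part~(b) is entirely parallel, starting from $N\T=F^{-1}AQ^{-1}$ with the inverses supplied by \cref{P-Invertible}. Expanding $\nu_{a,c}=(N\T)_{p,a}=\sum_{i,t}(F^{-1})_{p,i}A_i^{(t)}(Q^{-1})_{t,a}$ at $p=n-k+a-c$, inserting $(F^{-1})_{p,i}=\Gaussian{n-i}{p-i}_q$ and simplifying $\Gaussian{n-i}{n-k+a-c-i}_q=\Gaussian{n-i}{k+c-a}_q$ by the same symmetry, together with $(Q^{-1})_{t,a}=(-1)^{t-a}q^{m\BinomS{t-a}{2}}\Gaussian{t}{a}_{q^m}$, reproduces exactly the $x^ay^c$-coefficient of $\hat{u}^{(t)}_{n-i,i}$. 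Since every monomial of $W^{(t)}_\cC$ has total degree $n$ in $x,y$, only the entries $\hat{u}^{(t)}_{i,j}$ with $j=n-i$ are ever invoked, which justifies setting the rest to $0$; summing the substitutions over $t\in[k]_0$ then reassembles every coefficient $\nu_{a,c}$, hence all of $R_\cM$.

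The computations are routine but delicate, so the main obstacle is purely bookkeeping rather than conceptual. One must consistently distinguish the summation indices from the output indices in each triple matrix product, carry out the change of variables between the Whitney exponents $(a,c)=(\text{corank},\text{nullity})$ and the matrix indices $(p,l)$ of $N\T$, and apply the $q$-binomial symmetry at the right place so that the two differently-written brackets coincide. A secondary check is that all index shifts remain within the ranges where the Gaussian binomials and the coefficients $\nu_{a,c}$, $A_i^{(t)}$ are nonzero; this both licenses extending or restricting summation ranges freely and confirms the stated ``$=0$ otherwise'' conventions.
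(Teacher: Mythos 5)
Your proposal is correct and follows essentially the same route as the paper: both read the matrix identities $A=FN\T Q$ and $N\T=F^{-1}AQ^{-1}$ from \cref{T-HigherqM}(b) entrywise, perform the index change $b\mapsto c=n-k+a-b$ (your $p=n-k+a-c$), and apply the Gaussian-binomial symmetry to match the stated brackets. The paper records the two resulting coefficient identities as \eqref{e-Ait} and \eqref{e-nuac} and then reassembles the polynomials exactly as you do.
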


\begin{proof}
We have $W_{\cC}^{(t)}=\sum_{i=0}^nA_i^{(t)}x^{n-i}y^i$.
\\
(a) The identity $A=FN\T Q$ from \cref{T-HigherqM}(b) leads to 
\[
    A_i^{(t)}=\sum_{a=0}^k\sum_{b=0}^n(-1)^{i-b}q^{\BinomS{i-b}{2}}\GaussianD{n-b}{i-b}_q\GaussianD{a}{t}_{q^m}\nu_{a,n-k+a-b}
\]
for all $i\in[n]_0$ and $t\in[k]_0$.
(Note that the inner term is nonzero only if $b\in\{a,\ldots,n-k+a\}$.)
With the index change $c=n-k+a-b$ we obtain 
\begin{equation}\label{e-Ait}
    A_i^{(t)}=\sum_{a=0}^k\sum_{c=0}^{n-k}(-1)^{i-n+k-a+c}q^{\BinomS{i-n+k-a+c}{2}}\GaussianD{k-a+c}{n-i}_q\GaussianD{a}{t}_{q^m}\nu_{a,c}
\end{equation}
for all $i,t$, and the result follows. 
\\
(b) Thanks to \cref{T-HigherqM}(b) we have $N=(Q^{-1})\T A\T(F^{-1})\T$. Using that $\nu_{a,c}=N_{a,n-k+a-c}$ and \cref{P-Invertible}
we obtain 
\begin{equation}\label{e-nuac}
   \nu_{a,c}=\sum_{t=0}^k\sum_{j=0}^n(-1)^{t-a}q^{m\BinomS{t-a}{2}}\GaussianD{t}{a}_{q^m}\GaussianD{n-j}{k+c-a}_qA_j^{(t)}
      \ \ \text{ for all }a=[k]_0,\,c\in[n-k]_0.
\end{equation}
Thus
\begin{align*}
   R_\cM&=\sum_{a=0}^k\sum_{c=0}^{n-k} \nu_{a,c}x^ay^c
      =\sum_{t=0}^k\sum_{j=0}^n A^{(t)}_j
           \sum_{a=0}^k\sum_{c=0}^{n-k}(-1)^{t-a}q^{m\BinomS{t-a}{2}}\GaussianD{t}{a}_{q^m}\GaussianD{n-j}{k+c-a}_qx^ay^c\\
       &=\sum_{t=0}^k\sum_{j=0}^nA_j^{(t)}\hat{u}_{n-j,j}=\sum_{t=0}^k\Omega_{\widehat{\cU}^{(t)}}(\W_\cC^{(t)}).\qedhere
\end{align*}
\end{proof}

We briefly compare  the relations in \cref{C-SubA} with an earlier result in the literature which extracts the weight enumerator from 
a 4-variable Whitney function.

\begin{rem}\label{R-Shi19}
In \cite[p.~1768]{Shi19} the author studies $q$-polymatroids and introduces a 4-variable Whitney function
$R(X_1,X_2,X_3,X_4)$. For \qM{}s,  $R(X,Y,1,0)$ equals the Whitney function as in \cref{D-Polys}.
It is then shown \cite[Thm.~14]{Shi19} that for a representable $q$-polymatroid a suitable variable substitution of $R(X_1,X_2,X_3,X_4)$ 
results in the rank-weight enumerator $W_{\cC}$ of the representing code; see \cref{D-RMCode}(c). 
\cref{C-SubA} shows that with suitable monomial substitutions the `standard' two-variable Whitney function leads to 
all $t$-th higher weight enumerators and vice versa.
\end{rem}

\cref{C-SubA} may be regarded as a $q$-analogue of \cite[Thm.~9]{Bri07} and \cite[Thm.~5.11]{JuPe13}.
Let us provide some details. 

\begin{rem}\label{R-JuPe13Bri}
If in \cref{C-SubA}(a)  we replace $\Gaussian{k-a+c}{n-i}_q$ by $\Binom{k-a+c}{n-i}$ and~$q^m$ by~$q$ and omit $q^{\BinomS{i-n+k-a+c}{2}}$,
we obtain the Hamming-metric case as derived in \cite[Eq.~(20)]{Bri10}. 
It can be written as a classical substitution; see \cite[Thm.~2]{Bri10},  \cite[Thm.~9]{Bri07} or \cite[Thm.~5.11]{JuPe13}.
This result generalizes the celebrated Greene's Theorem \cite[Cor.~4.5]{Gre76}, which expresses the `standard' weight enumerator ($t=1$) in terms of the Whitney function.
Likewise, if in~(b) we replace~$q^m$ by~$q$ and $\Gaussian{n-j}{k+c-a}_q$ by $\Binom{n-j}{k+c-a}$, we obtain the Hamming-metric case as 
derived in \cite[Thm.~5; see also Rem.~6]{Bri10} and  \cite[Thm.~5.11]{JuPe13}.
The results in \cite{Bri07,JuPe13} have been achieved with the aid of an extended weight enumerator, involving an additional variable.
\end{rem}

\begin{exa}\label{E-53}
Let $\F=\F_2$ and consider the field $\F_{2^7}$ with primitive element~$\omega$ satisfying $\omega^7+\omega+1=0$. 
For $i=1,2$ define the \qM{}s $\cM_i=\cM_{G_i}=(\F^5,\rho_i)$, where  
\[
   G_1=\begin{pmatrix}1&0&0&\omega^{65}&\omega^{85}\\0&1&0&\omega^{37}&\omega^{72}\\
                       0&0&1&\omega^{124}&\omega^{118}\end{pmatrix},\
   G_2=\begin{pmatrix}1&0&0&\omega^{26}&\omega^{64}\\0&1&0&\omega^{27}&\omega^{20}\\
                       0&0&1&\omega^{50}&\omega^{92}\end{pmatrix}\in\F_{2^7}^{3\times5}.
\]                       
In \cite[Ex.~5.9]{GLJ25CF} it has been shown that 
\begin{equation}\label{e-WhitExa}
  R_{\cM_1}=R_{\cM_2}=x^3 + 31x^2 + (3y + 155)x + y^2 + 31y + 152.
\end{equation}
Thus, \cref{T-HigherqM}(b) implies that the rank-metric codes $\cC_i=\rs(G_i)$  share all higher weight enumerators. 
Indeed, the matrix~$A$ defined in that theorem is (for both codes) given by
\[
   A=\begin{pmatrix}1 & 0 & 0 & 0 \\0 & 0 & 0 & 0 \\0 & 3 & 0 & 0 \\0 & 134 & 0 & 0 \\0 & 3576 & 31 & 0 \\0 & 12800 & 16482 & 1\end{pmatrix}.
\]
For instance, $W_{\cC_i}^{(1)}=3x^{3}y^2+134x^2y^3+3576xy^4+12800y^5$ and thus the weight enumerator, defined  in \cref{D-RMCode}(c),
 is
$W_{\cC_i}=x^5+(2^7-1)W_{\cC_i}^{(1)}=x^5+381x^{3}y^2+17018x^2y^3+454152xy^4+1625600y^5$.
In \cite[Ex.~5.9]{GLJ25CF} it has been shown that the two \qM{}s are not isomorphic (see \cref{D-Equiv}).
As a consequence, there exists no $U\in\GL_5(\F)$ such that $\cC_1=\cC_2'$, where $\cC_2'=\rs(G_2U)$.
Even more, \cref{T-HigherqM}(a) tells us there is no $U\in\GL_5(\F)$ such that $\cC_1$ and $\cC_2'$ share all higher support distributions.
This can also be verified as follows.
Since $A_2^{(1)}=3$, both~$\cC_1$ and~$\cC_2$ have three 1-dimensional subcodes with rank weight~$2$.
For~$\cC_1$ these subcodes are generated by
\[
    \begin{pmatrix}1&1&0&w^{41}&1\end{pmatrix},\
    \begin{pmatrix}1&w^{77}&1&1&w^{77}\end{pmatrix},\
     \begin{pmatrix}1&w^{103}&w^{103}&w^{103}+1&0\end{pmatrix},
 \]
and their supports (see \eqref{e-SupportT})  are given by 
\[
  V_1=\rs\begin{pmatrix}1 & 1 & 0 & 0 & 1 \\0 & 0 & 0 & 1 & 0\end{pmatrix},\;
  V_2=\rs\begin{pmatrix}1 & 0 & 1 & 1 & 0 \\0 & 1 & 0 & 0 & 1\end{pmatrix},\;
  V_3=\rs\begin{pmatrix}1 & 0 & 0 & 1 & 0 \\0 & 1 & 1 & 1 & 0\end{pmatrix}.
\]
For~$\cC_2$ the subcodes are generated by 
\[
   \begin{pmatrix}1&w^{44}&0&w^{44}+1&0\end{pmatrix},\
    \begin{pmatrix}1&w^{83}&1&w^{83}&w^{83}+1\end{pmatrix},\
    \begin{pmatrix}1&1&w^{77}&w^{77}&1\end{pmatrix},
\]
and their supports are
\[
  W_1=\rs\begin{pmatrix}1 & 0 & 0 & 1 & 0 \\0 & 1 & 0 & 1 & 0\end{pmatrix},\;
  W_2=\rs\begin{pmatrix}1 & 0 & 1 & 0 & 1 \\0 & 1 & 0 & 1 & 1\end{pmatrix},\;
  W_3=\rs\begin{pmatrix}1 & 1 & 0 & 0 & 1 \\0 & 0 & 1 & 1 & 0\end{pmatrix}.
\]
Noticing that $\dim(V_1+V_2+V_3)=5$, whereas $\dim(W_1+W_2+W_3)=4$, we conclude that there is no isomorphism $U\in\GL_5(\F)$
such that $\cC_1$ and $\cC_2'=\rs(G_2U)$ share all higher support distributions, and this confirms that the \qM{}s are not isomorphic.
\end{exa}

\begin{rem}\label{R-Ex53}
Consider again \cref{E-53} and denote by $(A_V^{(t)})_{V\in\cL(\F^5)}$ and $(B_V^{(t)})_{V\in\cL(\F^5)}$ the $t$-th higher support 
distributions of~$\cM_1$ and~$\cM_2$, respectively. Let
\[
   W=\subspace{e_1+e_4,\,e_2+e_4,\,e_3+e_4,\,e_5},
\]
which is the subspace $W_1+W_2+W_3$ from \cref{E-53}.
We have $A_W^{(1)}=114$ whereas $B_W^{(1)}=120$. Even more, there is no subspace~$V$ such that $A_V^{(1)}=120$.
This confirms that~$\cM_1$ and~$\cM_2$ are not isomorphic. 
Moreover, this implies that there are $114(q^m-1)=14478$ vectors in~$\cC_1$ and $120(q^m-1)=15240$ vectors in~$\cC_2$ 
with support~$W$.
This can also be determined with the aid of the Critical Theorem, see \cite[Thm.~6.20]{Ja23} or \cite[Thm.~4.3]{AlBy23}. Indeed,
the characteristic polynomials of the contractions $\cM_i/W^\perp$ evaluate to $\chi_{\cM_1/W^\perp}(q^m)=14478$ and
$\chi_{\cM_2/W^\perp}(q^m)=15240$.
\end{rem}

We conclude this section with some consequences of \cref{C-SubA} and its proof.
First, the non-negativity of the coefficients of the Whitney function and the higher weight enumerators lead to the following.

\begin{rem}\label{C-NonNega}
With the data as in \cref{T-HigherqM}, the identities \eqref{e-Ait} and \eqref{e-nuac} imply
\begin{align}
    &\sum_{a=0}^k\sum_{c=0}^{n-k}(-1)^{i-n+k-a+c}q^{\BinomS{i-n+k-a+c}{2}}\GaussianD{k-a+c}{n-i}_q\GaussianD{a}{t}_{q^m}\nu_{a,c}
    \geq 0\ \ \text{ for all }(i,t)\in[n]_0\times[k]_0, \label{e-Ineq1}\\
    &\sum_{t=0}^k\sum_{j=0}^n(-1)^{t-a}q^{m\BinomS{t-a}{2}}\GaussianD{t}{a}_{q^m}\GaussianD{n-j}{k+c-a}_qA_j^{(t)}\geq 0
    \ \ \text{ for all }(a,c)\in[k]_0\times[n-k]_0 \nonumber.
\end{align}
The inequalities in \eqref{e-Ineq1} provide us with conditions for the coefficients of the Whitney function of a \qM{}.
Since they do not involve the rank-metric code itself, but only the field size~$q^m$, 
they give us a necessary condition on the size of the smallest field over which there exists a rank-metric code such that its associated 
\qM{} has the given Whitney function.
This condition is in general not sufficient.
For instance, for the Whitney function in \eqref{e-WhitExa} the smallest~$m$ such that \eqref{e-Ineq1} is true is 
$m=4$ (where $q=2$), but one easily verifies that the smallest field size for which there exists a rank-metric code such that the 
Whitney function is as in \eqref{e-WhitExa} is  $\F_{2^5}$.
Even more, the \qM{} $\cM_1$ can be represented over $\F_{2^5}$, whereas~$\cM_2$ needs~$\F_{2^6}$.
\end{rem}

Next, the case $t=0$ in \cref{C-SubA}(b) gives rise to some interesting identities for the coefficients $\nu_{a,c}$ of the Whitney function.
They are also true for non-representable \qM{}s.

\begin{cor}\label{C-WhitCoeffRelation}
Let $\cM$ be a \qM{} of rank~$k$ on an $n$-dimensional ground space and $R_\cM$ be as in \eqref{e-WhitExpand}.
Then 
\[
   \sum_{a=0}^k\sum_{c=0}^{n-k}\nu_{a,c}(-1)^{k+c-a-\ell}q^{\BinomS{k+c-a-\ell}{2}}\Gaussian{k+c-a}{\ell}_q=\delta_{\ell,n}
   \ \text{  for all }\ \ell\in[n]_0.
\]
\end{cor}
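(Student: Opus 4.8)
The plan is to derive this identity as a specialization of \cref{C-SubA}(b), or more precisely of the coefficient identity \eqref{e-nuac} that underlies its proof. Setting $t=0$ in \eqref{e-nuac} should collapse the outer sum over~$t$ to a single term, since $\GaussianD{t}{a}_{q^m}$ with $t=0$ forces $a=0$. However, the cleaner route avoids invoking representability (needed for $A_j^{(t)}$ to be meaningful) and instead works directly with the matrix identity $N\T=F^{-1}AQ^{-1}$, or rather its purely combinatorial skeleton. The key observation is that the claimed identity is really the statement that a certain row of $F^{-1}$ composed with the inverse relations returns a standard basis vector, independent of the code. Concretely, I would read off that the left-hand side is $(F\,\text{(column of }N\T\text{)})$ evaluated against the constant sequence, using \cref{P-Invertible}'s formula $F=\big((-1)^{i-j}q^{\BinomS{i-j}{2}}\Gaussian{n-j}{i-j}_q\big)$.

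First I would translate the indices. With the substitution $j=k+c-a$ (so that $\nu_{a,c}=N_{a,\,n-k+a-c}$ links to the matrix~$N$), the sum $\sum_{a,c}\nu_{a,c}(-1)^{k+c-a-\ell}q^{\BinomS{k+c-a-\ell}{2}}\Gaussian{k+c-a}{\ell}_q$ becomes a sum over~$j$ of the form $\sum_{j}\big(\sum_{a,c:\,k+c-a=j}\nu_{a,c}\big)(-1)^{j-\ell}q^{\BinomS{j-\ell}{2}}\Gaussian{j}{\ell}_q$. The inner sum $\sum_{k+c-a=j}\nu_{a,c}$ counts all subspaces $V\leq E$ with $\dim V-\rho(V)=c$ and $k-\rho(V)=a$, i.e.\ with $\dim V = k+c-a = j$; by \eqref{e-WhitCoeff} this is exactly $\Gaussian{n}{j}_q$, the total number of $j$-dimensional subspaces of~$E$. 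So the identity reduces to the field-intrinsic statement
\[
   \sum_{j=0}^n \Gaussian{n}{j}_q(-1)^{j-\ell}q^{\BinomS{j-\ell}{2}}\Gaussian{j}{\ell}_q=\delta_{\ell,n}\quad\text{for all }\ell\in[n]_0.
\]

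Second I would prove this reduced identity via the orthogonality relations for $q$-binomial coefficients already used in the proof of \cref{P-Invertible}. The standard identity there reads $\sum_{j}(-1)^{j-\ell}q^{\BinomS{j-\ell}{2}}\Gaussian{t}{j}_q\Gaussian{j}{\ell}_q=\delta_{t,\ell}$, so the target is precisely this with $t=n$ (noting $\Gaussian{n}{j}_q=\Gaussian{n}{j}_q$ plays the role of $\Gaussian{t}{j}_q$ at $t=n$), yielding $\delta_{n,\ell}=\delta_{\ell,n}$ as claimed. The main obstacle is purely bookkeeping: confirming that the collapse $\sum_{k+c-a=j}\nu_{a,c}=\Gaussian{n}{j}_q$ is valid over the \emph{full} allowed ranges $a\in[k]_0$, $c\in[n-k]_0$ without range-truncation artifacts, and that the convention $\nu_{a,c}=0$ outside these ranges (set in \cref{D-Matrices}) matches the zero terms $\Gaussian{j}{\ell}_q=0$ for $j<\ell$ and $\Gaussian{n}{j}_q=0$ for $j>n$. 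Since the reduction eliminates all dependence on the code and on~$m$, the resulting identity is manifestly valid for arbitrary (possibly non-representable) \qM{}s, which is exactly the asserted generality.
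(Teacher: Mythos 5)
Your proof is correct, but it takes a genuinely different and more elementary route than the paper's. The paper handles the non-representable case by re-running its matrix machinery: it defines $\hat{S}=MPKQ$ for an arbitrary \qM{}, shows via M\"obius inversion on $\cL(E)$ that the $t=0$ column of $\hat{S}$ is $(\delta_{V,0})_V$, pushes this through $\hat{A}=D\hat{S}=FN\T Q$ using \cref{P-MatRelations}(c), and then reads off the identity from \eqref{e-Ait} at $t=0$. You instead observe that the summand depends on $(a,c)$ only through $j=k+c-a=\dim V$, that the anti-diagonal sums $\sum_{k+c-a=j}\nu_{a,c}$ count \emph{all} $j$-dimensional subspaces (the partition by rank is exhaustive since $0\le k-\rho(V)\le k$ and $0\le \dim V-\rho(V)\le n-k$ by monotonicity of rank and nullity), hence equal $\Gaussian{n}{j}_q$, and that the resulting single sum $\sum_{j=0}^n\Gaussian{n}{j}_q(-1)^{j-\ell}q^{\BinomS{j-\ell}{2}}\Gaussian{j}{\ell}_q$ is the orthogonality relation for $q$-binomial coefficients at $t=n$ — the same relation the paper already invokes in \cref{P-Invertible}. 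Both arguments are sound; yours is shorter, bypasses representability and the matrices $M,P,K,Q,D$ entirely, and makes transparent that the corollary is equivalent (given invertibility of the orthogonality matrix) to the bare counting statement $\sum_{k+c-a=j}\nu_{a,c}=\Gaussian{n}{j}_q$, whereas the paper's derivation keeps the result embedded in the framework of \cref{T-HigherqM} and makes the specialization from \eqref{e-Ait} explicit.
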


\begin{proof}
For representable \qM{}s this follows from \eqref{e-Ait} because $A_i^{(0)}=\delta_{i,0}$.
For general \qM{}s we need to go back to the proof of \cref{T-HigherqM}.
Define $\hat{S}=MPKQ$. 
Thus, if~$\cM$ is representable, then $\hat{S}=S$, defined in \cref{T-HigherqM}. 
Writing $\hat{S}=(\hat{S}_{V,t})_{V,t}$ we obtain  as in the proof of \cref{T-HigherqM}(a)
$\hat{S}_{V,0}=\sum_{Z\leq V}\sum_{j=0}^k(-1)^{v-z}q^{\BinomS{v-z}{2}}\delta_{k-\rho(V^\perp),j}$.
Hence 
\[
   \hat{S}_{V,0}=\sum_{Z\leq V}(-1)^{v-z}q^{\BinomS{v-z}{2}}\sum_{j=0}^k\delta_{k-\rho(V^\perp),j}
   =\sum_{Z\leq V}\mu(Z,V)=\delta_{V,0}\ \text{ for all }\ V\in\cL(E).
\]
Next set $\hat{A}=D\hat{S}$. 
Then $\hat{A}_{i,0}=\sum_{V\leq E}\delta_{i,\dim V}\hat{S}_{V,0}=\delta_{i,0}$ for all $i\in[n]_0$.
Furthermore, as in the proof of \cref{T-HigherqM}(b) we have $\hat{A}=FN\T Q$ and thus 
\eqref{e-Ait} for $t=0$ implies
$\delta_{i,0}=\sum_{a=0}^k\sum_{c=0}^{n-k}(-1)^{i-n+k-a+c}q^{\BinomS{i-n+k-a+c}{2}}\Gaussian{k-a+c}{n-i}_q\nu_{a,c}$.
The result follows with the index change $\ell=n-i$.
\end{proof}

\section{The Projectivization Matroid}\label{S-Proj}
In this section we will consider the projectivization matroid of a \qM{}. 
These matroids have been introduced in \cite{JPV23} and studied in detail in \cite{Ja23}. 
We will show that the Whitney function of a \qM{} and that of its projectivization determine each other via monomial substitutions.

Recall from \cref{NotaIntro} that $\subspace{v_1,\ldots,v_t}$ denotes the $\F$-vector space generated by $v_1,\ldots,v_t$.

\begin{nota}\label{Nota}
For $n\in\N_0$ set $\subspace{n}_q:=\frac{q^n-1}{q-1}$ and $\cR_n=\big[\subspace{n}_q\big]$, thus 
$\cR_n=\{1,\ldots,\subspace{n}_q\}$.
For an $n$-dimensional vector space~$E$ over~$\F$ denote by $\P E$ the projective space over~$E$.
Thus, $\P E=\{\subspace{v_1},\ldots,\subspace{v_{\subspace{n}_q}}\}$ for any generators~$v_i$ of the distinct lines in $E$ 
(if $\dim E=0$, then $\P E=\emptyset$).
In particular, $\subspace{n}_q=|\P E|$.
Furthermore, for a subset $A=\{\subspace{v_{i_1}},\ldots,\subspace{v_{i_t}}\}\subseteq\P E$ define $\subspace{A}:=\subspace{v_{i_1},\ldots,v_{i_t}}$, that is $\subspace{A}$ is the subspace in~$E$ generated by the lines in~$A$.
Throughout this section, it will be convenient to consider the vectors $v_j\in\F^n$ as column vectors.
\end{nota}

From now on, fix an integer $n\in\N$ and an $n$-dimensional $\F$-vector space~$E$.
The following projectivization matroids have been introduced in \cite[Def.~14 and Prop.~15]{JPV23}
and studied in detail in \cite{Ja23}.

\begin{defi}\label{D-ProjectqM}
Let $\cM=(E,\rho)$ be a \qM. 
The \textbf{projectivization} of~$\cM$, denoted by $\P\cM$,  is defined as the matroid with ground space $\P E$ and 
rank function given by $r(A)=\rho(\subspace{A})$ for all $A\subseteq\P E$.
Note that~$\cM$ and~$\P\cM$ have the same rank.
\end{defi}

Let $q$-$\Mat_{k,n}$ be the set of all \qM{}s of rank~$k$ on an $n$-dimensional ground space
and $\Mat_{k,n}$ be the set of all matroids of rank~$k$ on a ground set of cardinality~$n$. 
Then projectivization is an injective map from $q$-$\Mat_{k,n}$  to $\Mat_{k,\subspace{n}_q}$.
In \cite[Ch.~4]{Ja23} it is shown that this map is even a functor between suitable categories of \qM{}s and matroids.

Indexing the lines in~$\P E$, we may write the projectivization as follows.

\begin{rem}\label{R-ProjRn}
Let~$\cM$  be as in \cref{D-ProjectqM}.
Choose $v_1,\ldots,v_{\subspace{n}_q}\in E$ such that $\P E=\{\subspace{v_1},\ldots,\subspace{v_{\subspace{n}_q}}\}$.
Then $\hat{M}=(\cR_n,\hat{r})$ with $\hat{r}(\{i_1,\ldots,i_t\})=\rho(\subspace{v_{i_1},\ldots,v_{i_t}})$ 
is a matroid, and it satisfies $\hat{M}\cong \P\cM$.
\end{rem}

In order to describe the projectivization of representable \qM{}s we cast the following terminology.
The  Hamming-metric code~$\hat{\cC}$ below has also been studied in \cite[Sec.~4]{ABNR22}.

\begin{defi}\label{D-ProjCode}
Choose $v_1,\ldots,v_{\subspace{n}_q}\in\F^n$ such that 
$\P \F^n=\{\subspace{v_1},\ldots,\subspace{v_{\subspace{n}_q}}\}$ and define
the matrix $S(n)=\big(v_1,\ldots,v_{\subspace{n}_q}\big)\in\F^{n\times\subspace{n}_q}$.
For a rank-metric code $\cC=\rs(G)$, where $G\in\F_{q^m}^{k\times n}$, 
the Hamming-metric code $\hat{\cC}=\rs(GS(n))\leq\F_{q^m}^{k\times\subspace{n}_q}$ 
is called the \textbf{projectivization} of~$\cC$.
\end{defi}

The matrix $S(n)$ is unique only up to monomial equivalence (that is, permutation and rescaling of its columns) and thus the same is true for $\hat{\cC}$.
Note that~$S(n)$ is the generator matrix of the $[(q^n-1)/(q-1),n]$-simplex code $\Sigma_n(q)$ over~$\F_q$, 
whereas the projectivization is  a Hamming-metric code over the field extension $\F_{q^m}$. 
Thus, if $\cC=\F_{q^m}^n$, i.e.,~$G$ is the identity matrix in
$\F_{q^m}^{n\times n}$, then $\hat{\cC}=\rs(GS(n))=\rs(S(n))$ is not the simplex code $\Sigma_n(q)$, but rather 
$\Sigma_n(q)\otimes\F_{q^m}$.

The following result has been proven in \cite[Thm.~6.15]{Ja23}. 
For completeness we include a short proof. 
Recall the notation from \cref{R-ClassMatr}.

\begin{theo}\label{T-ProjRepr}
If the \qM{} $\cM$ is s representable over~$\F_{q^m}$, then so is $\P\cM$. 
Precisely, let $G\in\F_{q^m}^{k\times n}$ and $\cM=\cM_G=(\F^n,\rho)$ as in \cref{T-ReprqMatr}.
Let $S(n)$ be as in \cref{D-ProjCode} and set $\hat{G}=GS(n)\in\F_{q^m}^{k\times\subspace{n}_q}$.
Consider the projectivization $\P\cM_G=(\P\F^n,r)$.
Then $r(\{\subspace{v_{i_1}},\ldots,\subspace{v_{i_t}}\})=\rk\big(\hat{G}_\Delta\big)$ for any $\Delta=\{i_1,\ldots,i_t\}\subseteq\cR_n$. 
In other words, $\P\cM_G$ is isomorphic to the vector matroid $M_{\hat{G}}$ and thus representable by the projectivization $\hat{\cC}=\rs(\hat{G})$. 
\end{theo}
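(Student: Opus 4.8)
The plan is to unwind the two definitions involved and observe that the rank formula from \cref{T-ReprqMatr} transforms directly into the column-rank description of the vector matroid $M_{\hat G}$. By \cref{D-ProjectqM}, the rank of a subset $A=\{\subspace{v_{i_1}},\ldots,\subspace{v_{i_t}}\}\subseteq\P\F^n$ is $r(A)=\rho(\subspace{v_{i_1},\ldots,v_{i_t}})$, so everything reduces to evaluating~$\rho$ on the $\F$-subspace spanned by the chosen column vectors.

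First I would write this subspace as a row space. Let $Y\in\F^{t\times n}$ be the matrix whose rows are the transposes $v_{i_1}\T,\ldots,v_{i_t}\T$; then $\rs(Y)=\subspace{v_{i_1},\ldots,v_{i_t}}$ and $Y\T=(v_{i_1},\ldots,v_{i_t})$. Applying the formula $\rho(\rs(Y))=\rk(GY\T)$ from \cref{T-ReprqMatr} gives $r(A)=\rk\big(G(v_{i_1},\ldots,v_{i_t})\big)=\rk\big((Gv_{i_1},\ldots,Gv_{i_t})\big)$, where the rank is taken over~$\F_{q^m}$.

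Next I would match this with~$\hat G$. Since $\hat G=GS(n)=\big(Gv_1,\ldots,Gv_{\subspace{n}_q}\big)$, the columns of~$\hat G$ indexed by $\Delta=\{i_1,\ldots,i_t\}$ are precisely $Gv_{i_1},\ldots,Gv_{i_t}$, that is $\hat G_\Delta=(Gv_{i_1},\ldots,Gv_{i_t})$. Hence $r(A)=\rk(\hat G_\Delta)$, which is exactly the rank function of the vector matroid~$M_{\hat G}$ under the bijection $\subspace{v_i}\leftrightarrow i$ between $\P\F^n$ and~$\cR_n$. Combined with the matroid description in \cref{R-ProjRn}, this yields $\P\cM_G\cong M_{\hat G}$, and the representing Hamming-metric code is $\hat{\cC}=\rs(\hat G)$.

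The only point requiring care, rather than being an actual obstacle, is to confirm that~$M_{\hat G}$ is a legitimate vector matroid in the sense of \cref{R-ClassMatr}, i.e., that~$\hat G$ has full row rank~$k$. This holds because the columns of~$S(n)$ span~$\F^n$ (every line is represented), so the left kernel of~$S(n)$ over~$\F_{q^m}$ is trivial; consequently $uGS(n)=0$ forces $uG=0$, and hence $u=0$ by the full row rank of~$G$, giving $\rk(\hat G)=\rk(G)=k$. Beyond this, the argument is purely a matter of tracking the row/column conventions and the transpose appearing in \cref{T-ReprqMatr}, with no deeper difficulty.
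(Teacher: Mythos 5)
Your proposal is correct and follows essentially the same route as the paper's own proof: unwind \cref{D-ProjectqM} and the rank formula of \cref{T-ReprqMatr} to get $r(A)=\rk\big(G(v_{i_1},\ldots,v_{i_t})\big)=\rk(\hat{G}_\Delta)$. The extra verification that $\hat{G}$ has full row rank is a harmless (and reasonable) addition that the paper leaves implicit.
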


\begin{proof}
\cref{D-ProjectqM} and \cref{T-ReprqMatr} imply 
\[
  r(\{\subspace{v_{i_1}},\ldots,\subspace{v_{i_t}}\})=\rho(\subspace{v_{i_1},\ldots,v_{i_t}})
  =\rk\big(G\big(v_{i_1},\ldots,v_{i_t}\big)\big)=\rk(\hat{G}_\Delta).
  \qedhere
\]
\end{proof}

We will see below in \cref{E-Converse} that representability of $\P\cM$ does not imply representability of~$\cM$.

\begin{prop}\label{P-IsoqMM}
Let $\cM=(E,\rho)$ and $\cM'=(E',\rho')$ be \qM{}s.
Then 
\[
   \cM\cong\cM'\Longrightarrow\P\cM\cong\P\cM'.
\]
\end{prop}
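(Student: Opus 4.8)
The plan is to lift the $\F$-isomorphism $\alpha$ witnessing $\cM\cong\cM'$ to a bijection of projective spaces and to check that it is a matroid isomorphism $\P\cM\cong\P\cM'$. Write $\cM=(E,\rho)$ and $\cM'=(E',\rho')$, and let $\alpha:E\to E'$ be an $\F$-isomorphism with $\rho'(\alpha(V))=\rho(V)$ for all $V\in\cL(E)$ (see \cref{D-Equiv}). Since $\alpha$ is linear and bijective, it sends each line $\subspace{v}\subseteq E$ (with $v\neq0$) to the line $\subspace{\alpha(v)}\subseteq E'$, and this assignment is independent of the chosen generator: if $\subspace{v}=\subspace{w}$, then $w=\lambda v$ for some $\lambda\in\F^*$, whence $\subspace{\alpha(w)}=\subspace{\alpha(v)}$. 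I would thus define $\beta:\P E\to\P E'$ by $\subspace{v}\mapsto\subspace{\alpha(v)}$. Injectivity and surjectivity of $\beta$ follow directly from those of $\alpha$, so $\beta$ is a bijection between the ground sets of $\P\cM$ and $\P\cM'$.

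The crux is the compatibility of $\beta$ with the span operation $\subspace{\,\cdot\,}$ from \cref{Nota}. For a subset $A=\{\subspace{v_{i_1}},\ldots,\subspace{v_{i_t}}\}\subseteq\P E$, I would verify that $\subspace{\beta(A)}=\alpha(\subspace{A})$: by definition $\subspace{\beta(A)}=\subspace{\alpha(v_{i_1}),\ldots,\alpha(v_{i_t})}$, and linearity of $\alpha$ gives $\alpha(\subspace{v_{i_1},\ldots,v_{i_t}})=\subspace{\alpha(v_{i_1}),\ldots,\alpha(v_{i_t})}$, which is exactly $\subspace{\beta(A)}$. This is the only step that uses anything beyond set theory, and it is immediate from linearity.

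Combining these observations, for every $A\subseteq\P E$ I would compute
\[
   r'(\beta(A))=\rho'(\subspace{\beta(A)})=\rho'(\alpha(\subspace{A}))=\rho(\subspace{A})=r(A),
\]
using \cref{D-ProjectqM} for the outermost equalities, the span compatibility for the second equality, and the defining property of $\alpha$ for the third. By the definition of matroid isomorphism recalled in \cref{R-ClassMatr}, this establishes $\P\cM\cong\P\cM'$. There is no genuine obstacle here: the whole argument rests on the elementary fact that a linear isomorphism commutes with taking spans, so the induced map on lines transports the rank data correctly. The single point that demands a line of care is confirming that $\beta$ is well defined on lines, i.e.\ independent of the representative~$v$, which I dispatched above.
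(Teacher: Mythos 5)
Your proposal is correct and follows essentially the same route as the paper: it lifts $\alpha$ to the induced bijection $\subspace{v}\mapsto\subspace{\alpha(v)}$ on $\P E$, uses that a linear isomorphism commutes with spans to get $\subspace{\beta(A)}=\alpha(\subspace{A})$, and then transports the rank function exactly as in the paper's computation. No gaps.
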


\begin{proof}
Let $\alpha:E\longrightarrow E'$ be an isomorphism such that $\rho'(\alpha(V))=\rho(V)$ for all $V\in\cL(E)$. 
Denote the rank functions of the projectivization matroids $\P\cM$ and $\P\cM'$ by~$r$ and~$r'$, respectively.
The map~$\alpha$ induces a well-defined bijection $\hat{\alpha}:\P E\longrightarrow\P E'$ via $\hat{\alpha}(\subspace{v})=\subspace{\alpha(v)}$.
Now we have for any subset $A=\{\subspace{v_1},\ldots,\subspace{v_t}\}\subseteq\P E$
\[
  r'(\hat{\alpha}(A))=r'(\{\subspace{\alpha(v_1)},\ldots,\subspace{\alpha(v_t)}\})=\rho'(\subspace{\alpha(v_1),\ldots,\alpha(v_t)})
  =\rho'(\alpha(\subspace{A}))=\rho(\subspace{A})=r(A).
\]
Hence $\P\cM$ and $\P\cM'$ are isomorphic. 
\end{proof}

The following example shows that the converse in \cref{P-IsoqMM} is not true.

\begin{exa}\label{E-Converse}
Recall that a $k$-spread of~$\F^n$ is a collection~$\cV$ of $k$-dimensional subspaces of~$\F^n$ such that 
$V\cap V'=0$ for all distinct $V,V'\in\cV$ and $\cup_{V\in\cV}V=\F^n$. 
A $k$-spread exists iff $k\mid n$, and in that case its size is $(q^n-1)/(q^k-1)$.
Any $k$-spread~$\cV$ gives rise to a \qM{} $\cM_{\cV}=(\F^n,\rho)$ via 
\[
  \rho(V)=\left\{\begin{array}{cl}k-1,&\text{if }V\in\cV,\\ \min\{k,\dim V\},&\text{otherwise}\end{array}\right.
\]
(see \cite[Prop.~4.6]{GLJ22Gen} for a more general version).
Let now $k=2$ and~$n=4$.
In \cite[Ex.~8.4]{GLJ24DSCyc} two 2-spreads $\cV=\{V_1,\ldots,V_{10}\}$ and $\cW=\{W_1,\ldots,W_{10}\}$ of~$\F_3^4$ are 
presented such that $\cM_{\cV}\not\cong\cM_{\cW}$. 
The latter implies that there is no isomorphism $\alpha:\F_3^4\longrightarrow\F_3^4$ such that $\alpha(V_i)\in\cW$ for all~$i$.
We will show now that the projectivization matroids are isomorphic.
Let $\P\cM_\cV=(\P\F_3^4,r)$ and $\P\cM_\cW=(\P\F_3^4,r')$.
Note that $|\P\F_3^4|=(3^4-1)/(3-1)=40$.
Every subspace~$V_i$ contains exactly 4 distinct lines, say $\subspace{v_{i,1}},\ldots,\subspace{v_{i,4}}$.
Thus $V_1,\ldots,V_{10}$ give rise to a partition $P=\big(P_i\big)_{i\in[10]}$ of $\P\F_3^4$, where $P_i=\{\subspace{v_{i,1}},\ldots,\subspace{v_{i,4}}\}$.
Likewise, the spread~$\cW$ induces a partition $P'=\big(P'_i\big)_{i\in[10]}$, where $P'_i=\{\subspace{w_{i,1}},\ldots,\subspace{w_{i,4}}\}$.
Choose any bijection $\beta:\P\F_3^4\longrightarrow\P\F_3^4$ such that $\beta(P_i)=P'_i$ for $i\in[10]$.
Clearly, any subset $A\subseteq\P\F_3^4$ satisfies the equivalence $A\subseteq P_i\iff \subspace{A}\leq V_i$.
This implies for any nonempty subset~$A$
\[
   r(A)=\rho(\subspace{A})=\left\{\begin{array}{cl}1,&\text{if }A\subseteq P_i\text{ for some i},\\ 
      \min\{2,\dim\subspace{A}\},&\text{otherwise.}\end{array}\right.
\]
The analogous property is true for the rank function~$r'$ and the partition~$P'$.
Noticing further that $\dim\subspace{A}=1\Longleftrightarrow|A|=1\Longleftrightarrow|\beta(A)|=1\Longleftrightarrow\dim\subspace{\beta(A)}=1$, we 
conclude  that $r'(\beta(A))=r(A)$ for all subsets $A\subseteq\P\F_3^4$.
Hence $\P\cM_\cV\cong\P\cM_\cW$.
All of this shows that the converse of \cref{P-IsoqMM} is not true.
Finally, in \cite[Ex.~8.4(3)]{GLJ24DSCyc} it is shown that $\cM_\cV$ is representable over~$\F_9$, 
whereas $\cM_\cW$ is not representable over any field extension of~$\F_3$.
Thanks to \cref{T-ProjRepr}, the projectivization $\P\cM_\cV$ is representable over~$\F_9$, 
and hence so is its isomorphic copy $\P\cM_\cW$.
This shows that the converse of \cref{T-ProjRepr} is not true.
\end{exa}

In the next section we will discuss  \qM{}s with isomorphic projectivizations in more detail. 

We now turn to the $t$-th higher rank-weight enumerator (resp.\ rank-support distribution) of~$\cC$.
As we will see, it is closely related to the $t$-th higher Hamming-weight enumerator (resp.\ Hamming-support distribution) of the 
projectivization~$\hat{\cC}$.
For $t=1$ the result in~(b) below has been proven in \cite[Thm.~4.9]{ABNR22} with the aid of $q$-systems. 
Recall \cref{R-HammCodes}.

\begin{theo}\label{T-HigherWeightsProj}
Define $\phi:\cL(\F^n)\longrightarrow 2^{\cR_n},\  Z\longmapsto \{j\in\cR_n\mid v_j\not\in Z^\perp\}$,
where~$Z^\perp$ refers to the standard dot product on~$\F^n$.
Let $\cC\leq\F_{q^m}^n$ and $\hat{\cC}\leq\F_{q^m}^{k\times\subspace{n}_q}$ be its projectivization.
\begin{alphalist}
\item Let $\cC'\leq_{q^m} \cC$ and $\hat{\cC'}\leq_{q^m}\hat{\cC}$ be its projectivization. Let $V\in\cL(\F^n)$. Then
        \begin{equation}\label{e-suppRH}
             \supp(\cC')=V\Longleftrightarrow\supp_{\text{H}}(\hat{\cC'})=\phi(V).        
        \end{equation}
       As a consequence, the $t$-th higher rank-support distribution of~$\cC$ and the $t$-th higher Hamming-support distribution of~$\hat{\cC}$ 
       determine each other. Precisely,
       \[
                |\{C'\leq_{q^m} \hat{\cC}\mid \dim C'=t,\, \supp_{\text{H}}(C')=\phi(V)\}|=|\{\cC'\leq_{q^m} \cC\mid \dim\cC'=t,\, \supp(\cC')=V\}|
       \]
       for all $t\in[k]_0$ and $V\in\cL(\F^n)$.     
\item The $t$-th higher rank-weight enumerator of~$\cC$ and the $t$-th higher Hamming-weight enumerator of $\hat{\cC}$
        determine each other. 
        Precisely, let $W^{(t)}_{\cC}=\sum_{i=0}^nA_i^{(t)}x^{n-i}y^i$ be as in \cref{D-Support} and $W_{\hat{\cC},\text{H}}^{(t)}$
        as in \cref{R-HammCodes}.
        Then 
        \begin{equation}\label{e-WW}
    W_{\hat{\cC},\text{H}}^{(t)}=\sum_{i=0}^n A_i^{(t)}x^{\subspace{n-i}_q}y^{\subspace{n}_q-\subspace{n-i}_q}
\end{equation}
for all $t\in[k]_0$. 
\end{alphalist}
\end{theo}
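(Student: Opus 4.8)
The plan is to reduce both parts to a single-codeword dictionary between rank supports and Hamming supports, and then to propagate it to subspaces and to generating functions.

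First I would set up the codeword correspondence. Writing $\hat{c}=cS(n)$ for $c\in\F_{q^m}^n$, the projectivization $\hat{\cC'}=\rs(G'S(n))$ of a subcode $\cC'=\rs(G')$ is exactly the image of $\cC'$ under the $\F_{q^m}$-linear map $c\mapsto cS(n)$, and the $j$-th coordinate of $\hat{c}$ is the standard dot product $c\cdot v_j$ over $\F_{q^m}$. The crucial observation is that for $v\in\F^n$ one has $\Psi(c)v\T=\psi(c\cdot v)$, because $\psi$ is $\F$-linear and the entries of $v$ lie in $\F$; since $\psi$ is injective this yields $v\in\supp(c)^\perp\iff c\cdot v=0$ (orthogonality over $\F$ on the left, over $\F_{q^m}$ on the right). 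Hence $\suppHam(\hat{c})=\{j\in\cR_n\mid c\cdot v_j\neq0\}=\{j\mid v_j\notin\supp(c)^\perp\}=\phi(\supp(c))$. This identity is the heart of the argument; everything after it is bookkeeping.

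For part (a) I would pass from codewords to subcodes. Using $\supp(\cC')=\sum_{c\in\cC'}\supp(c)$ (see \eqref{e-SupportT}) and the fact that $\perp$ turns sums into intersections, $\supp(\cC')^\perp=\bigcap_{c\in\cC'}\supp(c)^\perp$, so $\phi(\supp(\cC'))=\bigcup_{c\in\cC'}\phi(\supp(c))=\suppHam(\hat{\cC'})$. Because $\phi$ is injective — the complement $\cR_n\setminus\phi(Z)$ lists exactly the lines contained in $Z^\perp$, which determines $Z^\perp$ and hence $Z$ — the equality $\suppHam(\hat{\cC'})=\phi(V)$ is equivalent to $\supp(\cC')=V$, which is \eqref{e-suppRH}. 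For the counting consequence I would note that $c\mapsto cS(n)$ is injective (its kernel is trivial since the columns $v_j$ span $\F^n$), so $\cC'\mapsto\hat{\cC'}$ is a dimension-preserving bijection $\cD_t(\cC)\to\cD_t(\hat{\cC})$; combining this bijection with \eqref{e-suppRH} matches the two counted families and yields the stated cardinality identity.

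For part (b) I would translate supports into weights via a line count. If $\dim V=i$, the number of lines contained in $V^\perp$ is $\subspace{n-i}_q$, so $|\phi(V)|=\subspace{n}_q-\subspace{n-i}_q$, and therefore $\wtHam(\hat{\cC'})=\subspace{n}_q-\subspace{n-\wtrk(\cC')}_q$ for every subcode, where $\wtrk(\cC')=\dim\supp(\cC')$. Since $i\mapsto\subspace{n}_q-\subspace{n-i}_q$ is strictly increasing on $[n]_0$, it is injective, so under the bijection of part (a) the $t$-dimensional subcodes of $\hat{\cC}$ of Hamming weight $\subspace{n}_q-\subspace{n-i}_q$ correspond exactly to the $t$-dimensional subcodes $\cC'$ of rank weight $i$. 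Hence the Hamming-weight coefficient $A^{(t)}_{i',\text{H}}$ equals $A_i^{(t)}$ when $i'=\subspace{n}_q-\subspace{n-i}_q$ and vanishes otherwise. Substituting into $W^{(t)}_{\hat{\cC},\text{H}}=\sum_{i'}A^{(t)}_{i',\text{H}}x^{\subspace{n}_q-i'}y^{i'}$ and using $\subspace{n}_q-(\subspace{n}_q-\subspace{n-i}_q)=\subspace{n-i}_q$ gives \eqref{e-WW}. The main obstacle is precisely the first step: correctly identifying the $\F_{q^m}$-orthogonality governing the Hamming support of $\hat{c}$ with the $\F$-orthogonality governing the rank support of $c$. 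The remaining care points are minor — checking that $c\mapsto cS(n)$ is injective so that subcode dimensions are preserved, and that $\phi$ is injective so that \eqref{e-suppRH} can be read in both directions.
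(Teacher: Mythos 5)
Your proof is correct, and it follows the same overall strategy as the paper's: establish the support equivalence \eqref{e-suppRH}, deduce the counting identity from the dimension-preserving bijection $\cC'\mapsto\hat{\cC'}$, and obtain \eqref{e-WW} from the line count $|\phi(V)|=\subspace{n}_q-\subspace{n-\dim V}_q$. The one genuine difference is that the paper simply cites \cite[Lem.~6.19]{Ja23} for the equivalence \eqref{e-suppRH}, whereas you prove it from scratch via the codeword-level identity $\Psi(c)v\T=\psi(c\cdot v)$, which gives $\suppHam(\hat{c})=\phi(\supp(c))$ and then passes to subcodes using $\bigl(\sum_{c}\supp(c)\bigr)^{\perp}=\bigcap_{c}\supp(c)^{\perp}$ together with the injectivity of $\phi$. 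That self-contained argument is sound (the bridge between the $\F$-orthogonality defining $\supp(c)^\perp$ and the $\F_{q^m}$-vanishing of the coordinates of $cS(n)$ is exactly the right observation, and your injectivity checks for $\phi$ and for $c\mapsto cS(n)$ close the loop), so what your write-up buys is independence from the external reference at the cost of a somewhat longer proof; the paper's version buys brevity by outsourcing the key step.
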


\begin{proof}
(a) The equivalence~\eqref{e-suppRH} has been proven in \cite[Lem.~6.19]{Ja23}. 
Since every subcode of~$\hat{\cC}$ is the projectivization of a subcode of~$\cC$, the rest follows.
\\
(b) Let $\cC'\leq_{q^m}\cC$ with $\supp(\cC')=V$ and suppose $\wtrk(\cC')=i$. Thus $\dim V=i$. 
Then $\dim V^\perp=n-i$ and therefore $|\phi(V)|=(q^n-1)/(q-1)-(q^{n-i}-1)/(q-1)=\subspace{n}_q-\subspace{n-i}_q$.
This establishes \eqref{e-WW}.
\end{proof}

\begin{rem}\label{R-MonSubWeightEnum}
\eqref{e-WW} can be expressd in terms of the monomial substitution $W_{\hat{\cC},H}^{(t)}=\Omega_{\cU}(W^{(t)}_{\cC})$, where 
$\cU=(u_{a,b})_{a,b\in\N_0}$ with
\[
   u_{a,b}=\left\{\begin{array}{cl} x^{\subspace{a}_q}y^{\subspace{n}_q-\subspace{a}_q},&\text{if $a\in[n]_0$ and }b=n-a,\\
                             0,&\text{otherwise.}\end{array}\right.
\]
Conversely, $W^{(t)}_{\cC}$ can be obtained from $W_{\hat{\cC},H}^{(t)}$ via the monomial substitution 
$x^{\subspace{a}_q}y^{\subspace{n}_q-\subspace{a}_q}\longmapsto x^ay^b$ for all $a\in[n]_0$ (no other monomials occur
in $W_{\hat{\cC},H}^{(t)}$).
\end{rem}

\cref{T-HigherWeightsProj} together with \cref{C-SubA} has an interesting consequence.
Consider the \qM{} $\cM_G$ and its projectivization $\P\cM_G$, where $G$ is as in \cref{T-HigherWeightsProj}.
From \cref{C-SubA} we know that the Whitney function of~$\cM_G$ and the collection of all higher rank-weight enumerators of $\cC=\rs(G)$ 
determine each other. 
The analogous result for matroids and Hamming-metric codes, established in  \cite[Thm.~5.11]{JuPe13} and   \cite[Thm.~4]{Bri10},
states that the Whitney function of the matroid represented by $GS(n)$ and the collection of all higher Hamming-weight enumerators of $\hat{\cC}=\rs(GS(n))$ determine each other.
Since the  matroid represented by $GS(n)$ is isomorphic to $\P\cM_G$ (see \cref{T-ProjRepr}), \cref{T-HigherWeightsProj} implies that the
Whitney functions of a representable \qM{}~$\cM_G$ and its projectivization $\P\cM_G$ determine each other via a monomial substitution.
The latter could be obtained explicitly by composing all monomial substitutions involved in the various steps.
This, however, becomes extremely cumbersome, and only applies to representable \qM{}s.
In the following we will derive the relation between the Whitney functions $R_{\cM}$ and $R_{\P\cM}$ directly. 
It will apply to all \qM{}s.
We need some preparation. 

Recall that we fix $n\in\N$ and an $n$-dimensional $\F$-vector space~$E$.

\begin{defi}\label{D-alpha}
For $i\in\N_0$ and $r\in[n]_0$ set 
$\gamma_i^r(n)=\big|\big\{A\subseteq\P E\,\big|\, |A|=i,\ \dim\subspace{A}=r\big\}\big|$.
\end{defi}

We have the following properties leading to an explicit expression for $\gamma_i^r(n)$.

\begin{prop}\label{P-alpha}
Fix $i\in\N_0$ and $r\in[n]_0$.
\begin{alphalist}
\item $\gamma_i^r(n)=0$ for $i<r$ or $i>\subspace{n}_q$. 
      Moreover, $\gamma_0^0(n)=1$ and $\gamma_n^n(n)=\big(\prod_{\ell=0}^{n-1}\frac{q^n-q^\ell}{q-1}\big)/n!$. 
\item $\gamma_i^r(n)=\Gaussian{n}{r}_q\gamma_i^r(r)$.
\item $\sum_{j=0}^n\Gaussian{n}{j}_q\gamma_i^j(j)=\Binom{\subspace{n}_q}{i}$.
\item $\gamma_i^r(r)=\sum_{j=0}^r\Gaussian{r}{j}_q(-1)^jq^{\BinomS{j}{2}}\Binom{\subspace{r-j}_q}{i}$.
\end{alphalist}
\end{prop}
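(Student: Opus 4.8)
The plan is to read $\gamma_i^r(n)$ throughout as the number of $i$-element subsets $A\subseteq\P E$ whose span $\subspace{A}$ has dimension $r$, and to establish the four parts in order. Part~(a) is elementary counting. If $i<r$, any $i$ lines span a space of dimension at most $i<r$, so no size-$i$ subset spans an $r$-dimensional space; and if $i>\subspace{n}_q=|\P E|$ there is no subset of that size at all, so $\gamma_i^r(n)=0$ in both cases. The value $\gamma_0^0(n)=1$ records the empty set, whose span is the zero space. For $\gamma_n^n(n)$ I would count ordered tuples of $n$ linearly independent lines: the number of ordered linearly independent vector tuples is $\prod_{\ell=0}^{n-1}(q^n-q^\ell)$, and since each line accounts for $q-1$ nonzero vectors, dividing by $(q-1)^n$ counts ordered line tuples and a further division by $n!$ passes to unordered sets, yielding the stated product.

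For parts~(b) and~(c) the unifying idea is to organize the count over the span. Every $A$ counted by $\gamma_i^r(n)$ determines $W=\subspace{A}$, an $r$-dimensional subspace of $E$, and for fixed such $W$ the subsets $A\subseteq\P W$ with $\subspace{A}=W$ are exactly those counted by $\gamma_i^r(r)$ computed inside $W$. Since there are $\Gaussian{n}{r}_q$ subspaces $W$ of dimension $r$ and the spanning count is the same for each, summing over $W$ gives $\gamma_i^r(n)=\Gaussian{n}{r}_q\gamma_i^r(r)$, which is~(b). For~(c) I would count all $i$-element subsets of $\P E$ in two ways: there are $\Binom{\subspace{n}_q}{i}$ of them, and partitioning by the span dimension $j$ gives $\sum_{j=0}^n\gamma_i^j(n)$; substituting $\gamma_i^j(n)=\Gaussian{n}{j}_q\gamma_i^j(j)$ from~(b) yields the identity.

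Part~(d) calls for Möbius inversion on the subspace lattice $\cL(W)$ of a fixed $r$-dimensional space $W$. Let $f(U)=\Binom{\subspace{\dim U}_q}{i}$ be the number of $i$-subsets of $\P U$, and let $g(U)$ be the number spanning $U$ exactly; then $f(U)=\sum_{U'\leq U}g(U')$, and $g(U)$ depends only on $\dim U$, equalling $\gamma_i^{\dim U}(\dim U)$. Inverting with the Möbius function from \eqref{e-muWV} gives $g(W)=\sum_{U'\leq W}\mu(U',W)f(U')$; grouping the $\Gaussian{r}{s}_q$ subspaces $U'$ of each dimension $s$, inserting $\mu(U',W)=(-1)^{r-s}q^{\BinomS{r-s}{2}}$, and reindexing by $j=r-s$ produces the claimed alternating sum.

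These steps are largely routine counting. The most delicate point, used in both~(b) and~(d), is the homogeneity of the spanning count: that $\gamma_i^r(r)$ (equivalently $g(U)$) depends only on the dimension and not on the chosen subspace. I would justify this by observing that any two subspaces of equal dimension are carried onto one another by a linear automorphism of the ambient space, which induces a bijection of their sets of projective points and hence of the spanning subsets counted on each side.
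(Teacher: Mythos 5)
Your proposal is correct and follows essentially the same route as the paper: (b) by summing over the span, (c) by double-counting the $i$-subsets of $\P E$, and (d) by M\"obius inversion of the relation $f(W)=\sum_{X\leq W}g(X)$ on the subspace lattice. Your explicit justification of the well-definedness of $\gamma_i^r(r)$ via linear automorphisms, and the ordered-tuple count for $\gamma_n^n(n)$, are details the paper leaves implicit (it dismisses part (a) as clear), but they are sound.
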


A similar expression as in (d) also appears in \cite[Prop.~4.1]{LaTh94} for counting particular matrices over~$\F$.

\begin{proof}
(a) is clear.
\\
(b) Note first that there are $\Gaussian{n}{r}_q$ subspaces of~$E$ of dimension~$r$. 
For each such subspace~$W$ we have
\begin{align*}
   \gamma_i^r(r)&=\Big|\big\{A\in\P W\,\big|\, |A|=i,\, \dim\subspace{A}=r\big\}\big|
                      =\big|\big\{A\in\P W\,\big|\, |A|=i,\, \subspace{A}=W\big\}\big|\\
     &=\big|\big\{A\in\P E\,\big|\, |A|=i,\, \subspace{A}=W\big\}\big|,
\end{align*}
and this proves~(b).
\\
(c) Write $\P E=\{\subspace{v_1},\ldots,\subspace{v_{\subspace{n}_q}}\}$ and let $\cL_i(\cR_n)$be the set of $i$-subsets of~$\cR_n$.
Consider the map
\[
    \phi:\cL_i(\cR_n)\longrightarrow \cL(E),\ \{s_1,\ldots,s_i\}\longmapsto \subspace{v_{s_1},\ldots,v_{s_i}}.
\]
There are $\Gaussian{n}{j}_q$ subspaces~$W$ of dimension~$j$.
For each of them $|\phi^{-1}(W)|=\gamma_i^j(j)$, and hence
$|\cL_i(\cR_n)|=\sum_{j=0}^n\Gaussian{n}{j}_q\gamma_i^j(j)$. 
Now the statement follows from $|\cL_i(\cR_n)|=\Binom{\subspace{n}_q}{i}$.
\\
(d) Consider the maps
\[
  f:\cL(E)\longrightarrow\Z,\ W\longmapsto \Binom{\subspace{w}_q}{i},\qquad 
  g:\cL(E)\longrightarrow\Z,\ W\longmapsto \gamma_i^w(w),
\]
where again $w=\dim W$.
Then~(c) implies $f(W)=\sum_{X\leq W}g(X)$ for all $W\leq E$. 
Hence M\"obius inversion on the lattice $\cL(E)$ leads to
\begin{align*}
    \gamma_i^w(w)=g(W)&=\sum_{Z\leq W}\mu(Z,W)f(Z)=\sum_{Z\leq W}(-1)^{w-\dim Z}q^{\BinomS{w-\dim Z}{2}}f(Z)\\
        &=\sum_{z=0}^w\GaussianD{w}{z}_q(-1)^{w-z}q^{\BinomS{w-z}{2}}\Binom{\subspace{z}_q}{i}\\
        &=\sum_{j=0}^w\GaussianD{w}{j}_q(-1)^{j}q^{\BinomS{j}{2}}\Binom{\subspace{w-j}_q}{i}. 
        \qedhere
\end{align*}
\end{proof}

Now we are ready to discuss the Whitney functions of a \qM{} and its projectivization.

\begin{theo}\label{T-WhitneyProj}
Let $\cM=(E,\rho)$ be a \qM{} and $\P\cM=(\P E,r)$ be its projectivization matroid. 
Then the Whitney functions of $\cM$ and $\P\cM$ determine each other.
\end{theo}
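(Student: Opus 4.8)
The plan is to compute $R_{\P\cM}$ directly from its definition and to organize the sum over subsets $A\subseteq\P E$ according to the subspace $\subspace{A}\leq E$ that each $A$ generates; this will exhibit $R_{\P\cM}$ as a monomial substitution applied to $R_{\cM}$. Write $k=\rho(E)=r(\P E)$ for the common rank. Since $r(A)=\rho(\subspace{A})$, I would begin with
\[
   R_{\P\cM}=\sum_{A\subseteq\P E}x^{k-r(A)}y^{|A|-r(A)}=\sum_{W\leq E}x^{k-\rho(W)}y^{-\rho(W)}\!\!\sum_{\substack{A\subseteq\P E\\ \subspace{A}=W}}\!\!y^{|A|}.
\]
The inner sum depends only on $w=\dim W$: by the counting argument in the proof of \cref{P-alpha}(b), the number of $A\subseteq\P E$ with $\subspace{A}=W$ and $|A|=i$ equals $\gamma_i^w(w)$, so $\sum_{\subspace{A}=W}y^{|A|}=\sum_i\gamma_i^w(w)y^i$.

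Next I would reorganize the outer sum by corank and nullity. Collecting the $\nu_{a,b}$ subspaces $W$ of corank $a=k-\rho(W)$ and nullity $b=\dim W-\rho(W)$ (so that $\dim W=k-a+b$), I obtain
\[
   R_{\P\cM}=\sum_{a=0}^{k}\sum_{b=0}^{n-k}\nu_{a,b}\,x^{a}y^{a-k}\sum_i\gamma_i^{k-a+b}(k-a+b)\,y^i.
\]
Comparing with $R_{\cM}=\sum_{a,b}\nu_{a,b}x^ay^b$, this shows $R_{\P\cM}=\Omega_{\cU}(R_{\cM})$ for the monomial substitution sending $x^ay^b\mapsto x^ay^{a-k}\sum_i\gamma_i^{k-a+b}(k-a+b)y^i$. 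I would verify this lands in $\Z[x,y]$: since $\gamma_i^w(w)=0$ for $i<w$ by \cref{P-alpha}(a), the smallest power of $y$ occurring is $(k-a+b)+(a-k)=b\geq0$. Substituting the closed form for $\gamma_i^r(r)$ from \cref{P-alpha}(d) then makes $\cU$ fully explicit, which is presumably the content of the accompanying corollary.

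For the converse—recovering $R_{\cM}$ from $R_{\P\cM}$—I would argue by triangularity. Fixing $a$ and extracting the coefficient of $x^a$ in $R_{\P\cM}$, then multiplying by $y^{k-a}$, gives $\sum_{b}\nu_{a,b}\sum_i\gamma_i^{k-a+b}(k-a+b)y^i$. Regarding the linear map $(\nu_{a,b})_b\mapsto(\text{coefficients in }y)$ as a matrix with entries $\gamma_i^{k-a+b}(k-a+b)$ (rows indexed by $i$, columns by $b$), the vanishing $\gamma_i^w(w)=0$ for $i<w$ together with $\gamma_w^w(w)\neq0$ (\cref{P-alpha}(a)) places the first nonzero entry of column $b$ in the distinct row $i=k-a+b$. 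Hence the matrix has full column rank, and the $\nu_{a,b}$ can be read off successively starting from the lowest power of $y$. This proves that $R_{\cM}$ and $R_{\P\cM}$ determine each other.

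The main obstacle is the bookkeeping in the forward direction: correctly converting the pair $(\subspace{A},|A|)$ into the corank–nullity exponents and confirming that the substitution has nonnegative $y$-exponents. Once the span-grouping and the count $\gamma_i^w(w)$ are in place, both the substitution formula and its invertibility follow from the triangular structure of $\big(\gamma_i^w(w)\big)_{i,w}$, so no input beyond \cref{P-alpha} is required.
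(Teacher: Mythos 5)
Your proof is correct and follows essentially the same route as the paper: both group the subsets $A\subseteq\P E$ by their span $W=\subspace{A}$, count them via $\gamma_{|A|}^{\dim W}(\dim W)$, reindex by corank and nullity to obtain the monomial substitution $x^ay^b\mapsto x^a\sum_i\gamma_i^{k-a+b}(k-a+b)\,y^{i+a-k}$, and invert using the triangularity coming from $\gamma_i^w(w)=0$ for $i<w$ and $\gamma_w^w(w)\neq0$. The only cosmetic difference is that the paper records the inversion as an explicit lower-triangular linear system with an invertible square upper block, whereas you phrase it as full column rank with successive recovery of the $\nu_{a,b}$ from the lowest $y$-power upward.
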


An explicit relation between $R_\cM$ and $R_{\P\cM}$ will be given in \cref{C-SubWhitney}.

\begin{proof}
Let $k=\rho(E)$ be the rank of~$\cM$ and $R_\cM=\sum_{i=0}^k\sum_{j=0}^{n-k}\nu_{i,j}x^iy^j$ be as in \eqref{e-WhitExpand}.
Using \cref{D-alpha} we compute
\begin{align}
     R_{\P\cM}&=\sum_{A\subseteq\P E}x^{r(\P E)-r(A)}y^{|A|-r(A)}
              =\sum_{V\leq E}\sum_{A\subseteq\P V\atop \langle A\rangle=V}x^{k-\rho(V)}y^{\dim V-\rho(V)}y^{|A|-\dim V} \label{e-RPM2}\\
            &=\sum_{b=0}^n\sum_{V\leq E\atop \dim V=b}x^{k-\rho(V)}y^{b-\rho(V)}\sum_{\ell=b}^{\subspace{b}_q}\gamma_\ell^b(b)y^{\ell-b} \nonumber\\
            &=\sum_{b=0}^{n}\sum_{a=0}^k\nu_{k-a,b-a}x^{k-a}y^{b-a}\sum_{\ell=b}^{\subspace{b}_q}\gamma_\ell^b(b)y^{\ell-b}, \label{e-RPM3}
\end{align}
where the last step follows from  $\big|\{V\leq E\mid \dim V=b,\,\rho(V)=a\}\big|=\nu_{k-a,b-a}$; see \eqref{e-WhitCoeff}.
With the index change $i=k-a$ and $j=b-a$ we continue

\begin{align}
   R_{\P\cM}&=\sum_{j=0}^{n-k}\sum_{i=0}^{k}\nu_{i,j}x^iy^j
                        \sum_{\ell=j+k-i}^{\subspace{j+k-i}_q}\gamma_{\ell}^{j+k-i}(j+k-i)y^{\ell-j+i-k} \nonumber\\
                    &=\sum_{i=0}^{k}\sum_{j=0}^{n-k}\nu_{i,j}x^i
                       \sum_{\ell=j+k-i}^{\subspace{j+k-i}_q}\gamma_{\ell}^{j+k-i}(j+k-i)
                        y^{\ell+i-k}\label{e-RPM1}\\
                 &=\sum_{i=0}^{k}\sum_{\ell=k-i}^{\subspace{n}_q}\sum_{j=0}^{\min\{n-k,\ell-k+i\}}
                                   \nu_{i,j}\gamma_{\ell}^{j+k-i}(j+k-i) x^i y^{\ell+i-k} \nonumber\\
                 &=\sum_{i=0}^{k}\sum_{\ell=0}^{\subspace{n}_q-k+i}\,\sum_{j=0}^{\min\{n-k,\ell\}}
                                   \nu_{i,j}\gamma_{\ell+k-i}^{j+k-i}(j+k-i)x^iy^{\ell}   \nonumber\\
                     & =\sum_{i=0}^{k}\sum_{\ell=0}^{\subspace{n}_q-k+i}\mu_{i,\ell}x^i y^{\ell}, \nonumber
\end{align}
where 
\begin{equation}\label{e-munu}
   \mu_{i,\ell}=\sum_{j=0}^{\min\{n-k,\ell\}}\nu_{i,j}\gamma_{\ell+k-i}^{j+k-i}(j+k-i)\ \text{ for }\ i\in[k]_0\text{ and }
   \ell\in[\subspace{n}_q-k+i]_0.
\end{equation}
From  \eqref{e-WhitneyMatr} we know that $\mu_{i,\ell}=0$ for $\ell>\subspace{n}_q-k$, and thus 
we have to consider \eqref{e-munu} only for $\ell\in[\subspace{n}_q-k]_0$.
For any  $i\in[k]_0$ the identities in \eqref{e-munu} form the linear equation

\begin{equation}\label{e-LinSys}
  \begin{pmatrix}\mu_{i,0}\\[.6ex] \mu_{i,1}\\[.6ex] \vdots\\[.6ex] \mu_{i,n-k}\\[.6ex] \hline 
     \mu_{i,n-k+1}\\[.6ex] \vdots\\[.6ex] \mu_{i,\subspace{n}_q-k}\end{pmatrix}=
  \begin{pmatrix}\gamma_{k-i}^{k-i}(k-i)&0 &\cdots &0\\[.4ex] \gamma_{k-i+1}^{k-i}(k-i)&
              \gamma_{k-i+1}^{k-i+1}(k-i+1)&\cdots &0\\[.4ex]
      \vdots &\vdots &\ddots  &\vdots\\[.4ex]
      \gamma_{n-i}^{k-i}(k-i)&\gamma_{n-i}^{k-i+1}(k-i+1)&\ldots & \gamma_{n-i}^{n-i}(n-i)\\[.4ex] \hline
      \gamma_{n-i+1}^{k-i}(k-i)&\gamma_{n-i+1}^{k-i+1}(k-i+1)&\ldots & \gamma_{n-i+1}^{n-i}(n-i)\\[.4ex]
      \vdots &\vdots &  &\vdots\\[.4ex]
      \gamma_{\subspace{n}_q-i}^{k-i}(k-i)&\gamma_{\subspace{n}_q-i}^{k-i+1}(k-i+1)&\ldots & \gamma_{\subspace{n}_q-i}^{n-i}(n-i)
      \end{pmatrix}
      \begin{pmatrix}\nu_{i,0}\\[.5ex] \nu_{i,1}\\[.5ex] \vdots\\[.5ex] \nu_{i,n-k}\end{pmatrix}.
\end{equation}
This shows that for every~$i$ the vector $(\nu_{i,0},\, \ldots,\, \nu_{i,n-k})$ fully determines 
$(\mu_{i,0},\, \mu_{i,1},\, \ldots,\, \mu_{i,\subspace{n}_q-k})$. 
Conversely, note that the upper block of the big matrix has size $(n-k+1)\times(n-k+1)$ and is invertible since 
$\gamma_t^t(t)\neq0$ for every $t$ thanks to \cref{P-alpha}(a).
Thus every vector $(\mu_{i,0},\, \ldots,\, \mu_{i,\subspace{n}_q-k})\T$ in the column space of the big matrix 
uniquely determines the vector $(\nu_{i,0},\, \ldots,\, \nu_{i,n-k})$.
All of this shows that $R_\cM$ and $R_{\P\cM}$ determine each other.
\end{proof}

\begin{rem}\label{R-WhitProjLower}
The invertibility of the upper block of the big matrix in \eqref{e-LinSys} shows that for every~$i$ the coefficients 
$(\mu_{i,n-k+1},\, \ldots,\, \mu_{i,\subspace{n}_q-k})$ are determined by 
$(\mu_{i,0},\, \mu_{i,1},\, \ldots,\, \mu_{i,n-k})$.
In other words, the Whitney function $R_{\P\cM}=\sum_{i=0}^{k}\sum_{\ell=0}^{\subspace{n}_q-k}\mu_{i,\ell}x^i y^{\ell}$
is fully determined by its truncation $\sum_{i=0}^{k}\sum_{\ell=0}^{n-k}\mu_{i,\ell}x^iy^{\ell}$.
\end{rem}

The relation between $R_\cM$ and $R_{\P\cM}$ can be expressed in form of monomial substitutions.

\begin{cor}\label{C-SubWhitney}
Let $\cM=(E,\rho)$ be a \qM{} of rank~$k$.
Denote the inverse of the upper block in \eqref{e-LinSys} by 
$\big(\beta^{(i)}_{a,b}\big)_{a=0,\ldots,n-k\atop b=0,\ldots,n-k}$. 
Define  $\cG=(g_{i,j})_{i,j\in\N_0}$ and $\cH=(h_{i,j})_{i,j\in\N_0}$, where 
\[
     g_{i,j}=x^i\sum_{\ell=j+k-i}^{\subspace{j+k-i}_q}\gamma_{\ell}^{j+k-i}(j+k-i)y^{\ell+i-k},\quad
     h_{i,j}=\left\{\begin{array}{cl} x^i\sum_{\ell=j}^{n-k}\beta^{(i)}_{\ell,j}y^\ell,&\text{if }j\leq n-k,\\[.5ex] 0,&\text{otherwise.}\end{array}\right.
\]
Then $R_{\P\cM}=\Omega_{\cG}(R_\cM)$ and $R_{\cM}=\Omega_{\cH}(R_{\P\cM})$.
\end{cor}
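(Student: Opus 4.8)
The plan is to read off both identities from the computations already assembled in the proof of \cref{T-WhitneyProj}, which made the passage between the coefficients $\nu_{i,j}$ of $R_\cM$ and $\mu_{i,\ell}$ of $R_{\P\cM}$ fully explicit; the corollary is essentially a repackaging of \eqref{e-RPM1} and \eqref{e-LinSys} in the language of \cref{D-MonomSub}. For the first identity I would simply observe that the line \eqref{e-RPM1} reads
\[
   R_{\P\cM}=\sum_{i=0}^{k}\sum_{j=0}^{n-k}\nu_{i,j}\,g_{i,j},
\]
since $g_{i,j}$ is by definition exactly the polynomial multiplying $\nu_{i,j}$ there. Because $R_\cM=\sum_{i,j}\nu_{i,j}x^iy^j$ and $\Omega_\cG$ is the $\Z$-linear map sending $x^iy^j\mapsto g_{i,j}$, applying $\Omega_\cG$ to $R_\cM$ reproduces the right-hand side, giving $R_{\P\cM}=\Omega_\cG(R_\cM)$ at once.

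For the second identity I would invert the system \eqref{e-LinSys} separately for each fixed~$i$. The upper $(n-k+1)\times(n-k+1)$ block $B^{(i)}$ has entries $B^{(i)}_{\ell,j}=\gamma_{\ell+k-i}^{j+k-i}(j+k-i)$, which vanish whenever $\ell<j$ by \cref{P-alpha}(a) (the cardinality $\ell+k-i$ is then smaller than the dimension $j+k-i$), so $B^{(i)}$ is lower triangular; its diagonal entries $\gamma_{j+k-i}^{j+k-i}(j+k-i)=\gamma_t^t(t)$ with $t=j+k-i$ are nonzero, again by \cref{P-alpha}(a). Hence $B^{(i)}$ is invertible and its inverse $(\beta^{(i)}_{a,b})$ is also lower triangular, i.e.\ $\beta^{(i)}_{a,b}=0$ for $b>a$. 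Inverting the upper block of \eqref{e-LinSys} therefore yields
\[
   \nu_{i,\ell}=\sum_{j=0}^{\ell}\beta^{(i)}_{\ell,j}\,\mu_{i,j}\qquad\text{for all } i\in[k]_0,\ \ell\in[n-k]_0.
\]

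Finally I would apply $\Omega_\cH$ to $R_{\P\cM}=\sum_{i=0}^{k}\sum_{j=0}^{\subspace{n}_q-k}\mu_{i,j}x^iy^j$. Since $h_{i,j}=0$ for $j>n-k$, every term with $j>n-k$ is annihilated (these coefficients are in any case redundant by \cref{R-WhitProjLower}), and the surviving terms give, after interchanging the order of summation,
\[
   \Omega_\cH(R_{\P\cM})=\sum_{i=0}^{k}\sum_{j=0}^{n-k}\mu_{i,j}\,h_{i,j}
      =\sum_{i=0}^{k}x^i\sum_{\ell=0}^{n-k}\Big(\sum_{j=0}^{\ell}\beta^{(i)}_{\ell,j}\mu_{i,j}\Big)y^\ell
      =\sum_{i=0}^{k}\sum_{\ell=0}^{n-k}\nu_{i,\ell}x^iy^\ell=R_\cM,
\]
where the penultimate equality is the inversion formula above. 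The only genuinely delicate point is the index bookkeeping: one must use the lower triangularity of $(\beta^{(i)}_{a,b})$ to truncate the inner sum at $j=\ell$ and to justify swapping the summation order so that the coefficient of $x^iy^\ell$ becomes exactly $\nu_{i,\ell}$. Everything else is a direct transcription of \eqref{e-RPM1} and \eqref{e-LinSys}, so beyond this careful reindexing I do not expect any substantial obstacle.
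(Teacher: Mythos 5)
Your proposal is correct and follows essentially the same route as the paper: the first identity is read off directly from \eqref{e-RPM1}, and the second is obtained by inverting the lower-triangular upper block of \eqref{e-LinSys} to get $\nu_{i,\ell}=\sum_{j=0}^{\ell}\beta^{(i)}_{\ell,j}\mu_{i,j}$ and then interchanging the order of summation. The only difference is that you spell out in more detail why the block and its inverse are lower triangular, which the paper leaves implicit.
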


\begin{proof}
Let $R_{\cM}=\sum_{i=0}^k\sum_{\ell=0}^{n-k}\nu_{i,\ell}x^iy^\ell$ and 
$R_{\P\cM}=\sum_{i=0}^{k}\sum_{j=0}^{\subspace{n}_q-k}\mu_{i,j}x^iy^{j}$.
The first identity is in \eqref{e-RPM1}.
As for the second one, note that the upper block in \eqref{e-LinSys} is lower triangular and thus 
$\nu_{i,\ell}=\sum_{j=0}^\ell\beta^{(i)}_{\ell,j}\mu_{i,j}$ for $\ell\in[n-k]_0$.
As a consequence, 
\[
   R_\cM
   =\sum_{i=0}^{k}\sum_{\ell=0}^{n-k}\sum_{j=0}^\ell\beta^{(i)}_{\ell,j}\mu_{i,j}x^iy^\ell
   =\sum_{i=0}^{k}\sum_{j=0}^{n-k}\mu_{i,j}\sum_{\ell=j}^{n-k}\beta^{(i)}_{\ell,j}x^iy^\ell
   =\sum_{i=0}^{k}\sum_{j=0}^{\subspace{n}_q-k}\mu_{i,j} h_{i,j}.
   \qedhere
\]
\end{proof}

\begin{exa}\label{E-UnifSimplex}
Let $\cU_{n,n}=(\F^n,\rho)$ be the free \qM{} of rank~$n$. Thus $\rho(V)=\dim V$ for all $V\leq \F^n$. 
Its Whitney function is
\[
   R_{\cU_{n,n}}=\sum_{V\leq\F^n}x^{n-\dim V}y^{\dim V-\rho(V)}=\sum_{i=0}^n\GaussianD{n}{i}_qx^i.
\]
Set~$M:=\P \cU_{n,n}$.
With the aid of \cref{C-SubWhitney} we compute the Whitney function of $M$ as
\begin{align}
  R_M&=\sum_{i=0}^n\GaussianD{n}{i}_qg_{i,0}
  =\sum_{i=0}^n\GaussianD{n}{i}_qx^i\sum_{\ell=n-i}^{\subspace{n-i}_q}\gamma_\ell^{n-i}(n-i)y^{\ell+i-n} \nonumber\\
   &=\sum_{i=0}^n\GaussianD{n}{i}_q\sum_{\ell=n-i}^{\subspace{n-i}_q}\gamma_\ell^{n-i}(n-i)x^iy^{\ell+i-n}.\label{e-WhitProjUnif}
\end{align}
The expression for $R_M$ in \eqref{e-WhitProjUnif} can also be seen directly as follows.
Since $\cU_{n,n}$ is the \qM{} represented by the matrix $G=I_n$, \cref{T-ProjRepr} implies that the projectivization~$M$ 
is isomorphic to the matroid with ground set $\cR_n$ represented by the matrix $S(n)$ defined in the same theorem.
Recall that $S(n)$ is a generator matrix for the Simplex code of dimension~$n$ and length $\subspace{n}_q$. 
The definition in \eqref{e-WhitneyMatr} leads to
\begin{equation}\label{e-RMbij}
    R_{M}=\sum_{i=0}^n\sum_{j=0}^{\subspace{n}_q-n}b_{i,j}x^iy^j,\ \text{ where }\ b_{i,j}=|\{A\subseteq\cR_n\mid \rk S(n)_A=n-i,\,|A|=n-i+j\}|,
\end{equation}
and where $S(n)_A$ is the submatrix of~$S(n)$ consisting of the columns indexed by~$A$.
Note that 
$b_{i,j}=\big|\big\{A\subseteq\P\F^n\,\big|\, \dim\subspace{A}=n-i,\, |A|=n-i+j\big\}\big|=\gamma_{n-i+j}^{n-i}(n)
=\Gaussian{n}{i}_q\gamma_{n-i+j}^{n-i}(n-i)$ by \cref{D-alpha} and \cref{P-alpha}(b).
Thanks to $\subspace{n-i}_q\leq\subspace{n}_q-i$ and the fact that $\gamma_\ell^{n-i}(n-i)=0$ for $\ell>\subspace{n-i}_q$, \eqref{e-RMbij}
equals \eqref{e-WhitProjUnif}.
Let us also determine the characteristic polynomial of~$M$. 
On the one hand, 
\begin{align}
    \chi_M(x)&=(-1)^nR_M(-x,-1)=(-1)^n\sum_{i=0}^n\GaussianD{n}{i}_q\sum_{\ell=n-i}^{\subspace{n-i}_q}\gamma_\ell^{n-i}(n-i)(-1)^{\ell-n}x^i
          \nonumber\\
        &=\sum_{i=0}^n\GaussianD{n}{i}_q\Big(\sum_{\ell=n-i}^{\subspace{n-i}_q}(-1)^\ell\gamma_\ell^{n-i}(n-i)\Big) x^i\label{e-chiM}.
\end{align}
On the other hand, \cite[Thm.~5.8]{Ja23} tells us that $\chi_M=\chi_{\cU_{n,n}}$. 
By definition (see \cref{R-CharPoly}), the latter is given by 
\begin{equation}\label{e-chiU}
  \chi_{\cU_{n,n}}=\sum_{b=0}^n\sum_{\dim V=b}(-1)^b q^{\BinomS{b}{2}}x^{n-b}=\sum_{i=0}^n(-1)^{n-i}\GaussianD{n}{i}_q q^{\BinomS{n-i}{2}}x^i.
\end{equation}
(this appears also in \cite[Ex.~5.51]{JuPe13}). Comparing \eqref{e-chiM} and \eqref{e-chiU} leads to the identities
\[
   \sum_{\ell=i}^{\subspace{i}_q}(-1)^{\ell-i}\gamma_\ell^i(i)=q^{\BinomS{i}{2}}
\]
for all $i\in\N_0$.  
\end{exa} 

We close this section with the following simple corollary.

\begin{cor}\label{C-WeakIsom}
Let $\cM_i,\, i=1,2,$ be \qM{}s such that~$\cM_i$ is representable  by the matrix $G_i\in\F_{q^m}^{k\times n}$.
Suppose $\P\cM_1\cong\P\cM_2$.
Then the rank-metric codes
$\cC_i=\rs(G_i),\,i=1,2,$ share all higher rank-weight enumerators.
\end{cor}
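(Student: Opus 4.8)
The plan is to chain three facts that are already available in the excerpt: that isomorphic matroids have identical Whitney functions, that for Hamming-metric codes the Whitney function of the associated vector matroid determines all higher Hamming-weight enumerators, and that \cref{T-HigherWeightsProj}(b) lets one pass between the higher rank-weight enumerators of~$\cC_i$ and the higher Hamming-weight enumerators of the projectivization~$\hat{\cC}_i$. The entire content is bookkeeping of these invariants through the projectivization functor.

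First I would set $\hat{G}_i=G_iS(n)$ and $\hat{\cC}_i=\rs(\hat{G}_i)$ for $i=1,2$, with $S(n)$ as in \cref{D-ProjCode}. By \cref{T-ProjRepr} the projectivization $\P\cM_i$ is isomorphic to the vector matroid $M_{\hat{G}_i}$, and $\hat{\cC}_i$ is a representing Hamming-metric code. Since the Whitney function of a matroid depends only on the pairs $(r(A),|A|)$ and these are preserved by a rank-preserving bijection, it is immediate from \cref{R-ClassMatr} that it is an isomorphism invariant. Hence the hypothesis $\P\cM_1\cong\P\cM_2$ gives $R_{M_{\hat{G}_1}}=R_{\P\cM_1}=R_{\P\cM_2}=R_{M_{\hat{G}_2}}$.

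Next I would invoke the Hamming-metric theorem cited in the text preceding this corollary, namely \cite[Thm.~4]{Bri10} (equivalently \cite[Thm.~5.11]{JuPe13}): for a Hamming-metric code represented by a matrix~$H$, the Whitney function of $M_H$ and the collection of all higher Hamming-weight enumerators determine each other. Applied to $H=\hat{G}_i$, the equality $R_{M_{\hat{G}_1}}=R_{M_{\hat{G}_2}}$ forces $W^{(t)}_{\hat{\cC}_1,\text{H}}=W^{(t)}_{\hat{\cC}_2,\text{H}}$ for every~$t$. I would then transfer this back to the rank-metric side via \cref{T-HigherWeightsProj}(b): by \eqref{e-WW} the coefficients $A_i^{(t)}$ of $W^{(t)}_{\cC_i}$ occur verbatim as the coefficients of $W^{(t)}_{\hat{\cC}_i,\text{H}}$, attached to the monomials $x^{\subspace{n-i}_q}y^{\subspace{n}_q-\subspace{n-i}_q}$. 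Since $i\mapsto\subspace{n-i}_q$ is strictly decreasing on $[n]_0$, these monomials are pairwise distinct, so each $A_i^{(t)}$ is read off unambiguously from $W^{(t)}_{\hat{\cC}_i,\text{H}}$; this is precisely the inverse monomial substitution recorded in \cref{R-MonSubWeightEnum}. Therefore $W^{(t)}_{\hat{\cC}_1,\text{H}}=W^{(t)}_{\hat{\cC}_2,\text{H}}$ yields equality of the coefficients $A_i^{(t)}$ for the two codes, i.e.\ $W^{(t)}_{\cC_1}=W^{(t)}_{\cC_2}$ for all~$t$, which is the claim.

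As for difficulty, there is essentially no obstacle, which is why the result is labelled a simple corollary. The one point deserving attention is that the argument operates entirely at the level of invariants: everything passes through Whitney functions and enumerators, so no compatible bijection between the ground sets $\P E_1$ and $\P E_2$ need be exhibited, and in particular $\hat{\cC}_1$ and $\hat{\cC}_2$ are not claimed to be equivalent codes. The single genuinely nontrivial ingredient is the external Hamming-metric determination result quoted above, which I am taking as given.
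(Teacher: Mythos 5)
Your proof is correct, but it takes a different route from the paper's. The paper's own proof is two lines: the hypothesis gives $R_{\P\cM_1}=R_{\P\cM_2}$, then \cref{C-SubWhitney} (the direct, purely $q$-matroid-theoretic relation between $R_{\cM}$ and $R_{\P\cM}$) yields $R_{\cM_1}=R_{\cM_2}$, and \cref{C-SubA} (the paper's rank-metric analogue of Greene/Britz) converts this into equality of all higher rank-weight enumerators. You instead route everything through the projectivized Hamming codes: $\P\cM_i\cong M_{\hat{G}_i}$ via \cref{T-ProjRepr}, then the external Hamming-metric determination result of Britz to get $W^{(t)}_{\hat{\cC}_1,\text{H}}=W^{(t)}_{\hat{\cC}_2,\text{H}}$, then the invertible monomial substitution of \cref{T-HigherWeightsProj}(b)/\cref{R-MonSubWeightEnum} to descend to $W^{(t)}_{\cC_1}=W^{(t)}_{\cC_2}$. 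This is essentially the composite chain the authors describe (and deliberately set aside as cumbersome for computing the $R_{\cM}\leftrightarrow R_{\P\cM}$ relation) in the paragraph following \cref{T-HigherWeightsProj}; for this corollary it is perfectly adequate since representability is assumed. What the paper's route buys is self-containedness and the fact that its key intermediate step ($R_{\P\cM}$ determines $R_{\cM}$) holds for arbitrary, not just representable, $q$-matroids; what your route buys is that it bypasses the combinatorics of \cref{C-SubWhitney} entirely, needing only the injectivity of $i\mapsto\subspace{n-i}_q$ and results already quoted in the text. Two small points worth making explicit: since $\cM_i$ is only assumed \emph{representable by} $G_i$ (i.e.\ $\cM_i\cong\cM_{G_i}$), one should invoke \cref{P-IsoqMM} to pass from $\P\cM_1\cong\P\cM_2$ to $\P\cM_{G_1}\cong\P\cM_{G_2}$ before applying \cref{T-ProjRepr}; and the Britz determination map depends on the field size, so one should note both $\hat{\cC}_i$ live over the same field $\F_{q^m}$ with the same length and dimension. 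Neither is a real gap.
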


\begin{proof}
The assumption implies $R_{\P\cM_1}=R_{\P\cM_2}$ and thus $R_{\cM_1}=R_{\cM_2}$ thanks to \cref{C-SubWhitney}.
Now the statement  follows from \cref{C-SubA}.
\end{proof}

\section{Weak Isomorphisms}\label{S-WeakIso}

In this section we will discuss \qM{}s whose projectivizations are isomorphic. 
It will turn out that this is equivalent to the \qM{}s being weakly isomorphic in the sense of \cref{D-WeakIso} below.
We need the following facts for which we refer to \cite[Sec.~3]{BCIJS23}.

\begin{rem}\label{R-FlatCycFlatsLattice}
Let $\cM=(E,\rho)$ be a \qM{}. 
The \textbf{closure} of~$V\in\cL(E)$ is defined as the subspace $\cl(V)=\{x\in E\mid \rho(V+\subspace{x})=\rho(V)\}$, and~$V$ is called a \textbf{flat} if~$\cl(V)=V$.
The collection of all flats is denoted by $\cF(\cM)$, and $(\cF(\cM),\leq,\cap,\vee)$, where $V_1\vee V_2=\cl(V_1+V_2)$, is a geometric lattice 
with height function equal to $\rho$.
The inverse image of a flat $F\in\cF(\cM)$ under the closure operator is given by $\cl^{-1}(F)=\{V\in\cL(E)\mid \cl(V)=F\}=\{V\in\cL(F)\mid \rho(V)=\rho(F)\}$.
The notation $\cF(\,\cdot\,)$ and $\cl(\,\cdot\,)$ for the flats and the closure operator will also be used for matroids.
\end{rem}

\begin{defi}\label{D-WeakIso}
The \qM{}s $\cM_1=(E_1,\rho_1)$ and $\cM_2=(E_2,\rho_2)$ are called \textbf{weakly isomorphic} if there exists a dimension-preserving lattice isomorphism  $\alpha:\cF(\cM_1)\longrightarrow\cF(\cM_2)$, that is $\dim F=\dim\alpha(F)$ for all $F\in\cF(\cM_1)$.
\end{defi}

Note that since the height in the lattice of flats of a \qM{} equals the rank in the \qM{}, any lattice isomorphism between the lattices of flats of two \qM{}s 
is rank-preserving.
Thus, weakly isomorphic \qM{}s admit a dimension- and rank-preserving lattice isomorphism between their lattices of flats.

Clearly, isomorphic \qM{}s are weakly isomorphic. The converse is not true.

\begin{exa}\label{E-Converse2}
Consider the non-isomorphic \qM{}s $\cM_\cV$ and $\cM_\cW$ from \cref{E-Converse}. In that case $\cF(\cM_{\cV})=\cV\cup\{0,\F_3^4\}$ and 
$\cF(\cM_{\cW})=\cW\cup\{0,\F_3^4\}$; see also \cite[Ex.~8.4]{GLJ24DSCyc}. 
Therefore $\cF(\cM_\cV)$ and $\cF(\cM_\cW)$ admit a dimension-preserving lattice isomorphism.
Hence $\cM_\cV$ and~$\cM_\cW$ are weakly isomorphic, but not isomorphic.
\end{exa}

For matroids, the corresponding notion of weakly isomorphic coincides with isomorphic.
This is shown in \cref{T-FIsoIso} below. 
We believe that this is a well-known fact in matroid theory but could not find a source. 
For simple matroids this is mentioned at various places, for instance \cite[Bottom of p.~54]{Ox11}.

\begin{theo}\label{T-FIsoIso}
Let $M_1=(S_1,r_1),\,M_2=(S_2,r_2)$ be matroids and $\alpha:\cF(M_1)\longrightarrow\cF(M_2)$ be a cardinality-preserving lattice isomorphism.
Then $M_1\cong M_2$.
\end{theo}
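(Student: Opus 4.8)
The plan is to use the isomorphism $\alpha$ to manufacture a bijection $\varphi:S_1\longrightarrow S_2$ of the ground sets and then verify that it is rank-preserving. The guiding principle is that the lattice of flats records everything about a matroid except the number of loops and the sizes of the parallel classes, and it is exactly this missing numerical data that the cardinality-preserving hypothesis supplies.

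First I would recall the relevant structure of $\cF(M_i)$. The bottom element $\hat{0}_i=\cl_i(\emptyset)$ is the set of loops, and the atoms of $\cF(M_i)$ are precisely the rank-one flats. For a non-loop $s$, the flat $\cl_i(\{s\})$ is the unique atom containing~$s$, so the complements $A\setminus\hat{0}_i$, as $A$ ranges over the atoms, are the parallel classes and partition the non-loops; together with $\hat{0}_i$ they partition $S_i$. Since $\alpha$ is a lattice isomorphism it sends $\hat{0}_1$ to $\hat{0}_2$ and restricts to a bijection from the atoms of $M_1$ onto the atoms of $M_2$. Cardinality-preservation gives $|\hat{0}_1|=|\hat{0}_2|$ and $|A|=|\alpha(A)|$ for every atom~$A$, whence $|A\setminus\hat{0}_1|=|\alpha(A)\setminus\hat{0}_2|$. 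I would then define $\varphi$ by choosing an arbitrary bijection $\hat{0}_1\to\hat{0}_2$ together with arbitrary bijections $A\setminus\hat{0}_1\to\alpha(A)\setminus\hat{0}_2$ for each atom $A$; since both families partition the respective ground sets, $\varphi$ is a well-defined bijection.

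The core of the argument is to show $\cl_2(\varphi(X))=\alpha(\cl_1(X))$ for every $X\subseteq S_1$. I would first check this for singletons: if $s$ is a loop then $\varphi(s)\in\hat{0}_2$, so both sides equal $\hat{0}_2=\alpha(\hat{0}_1)$; if $s$ lies in the atom $A=\cl_1(\{s\})$ then $\varphi(s)\in\alpha(A)\setminus\hat{0}_2$, so $\cl_2(\{\varphi(s)\})=\alpha(A)$. The general case then follows from the identity $\cl(X)=\bigvee_{s\in X}\cl(\{s\})$ in the lattice of flats (with the empty join read as $\hat{0}$), together with the fact that $\alpha$ preserves joins: applying $\varphi$ elementwise and joining yields $\cl_2(\varphi(X))=\bigvee_{s\in X}\cl_2(\{\varphi(s)\})=\bigvee_{s\in X}\alpha(\cl_1(\{s\}))=\alpha(\cl_1(X))$. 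Finally, since $\alpha$ is an order isomorphism of geometric lattices it preserves the height function, and in a matroid the rank of any subset equals the height of its closure in $\cF(M)$; hence $r_2(\varphi(X))=\mathrm{height}(\cl_2(\varphi(X)))=\mathrm{height}(\alpha(\cl_1(X)))=\mathrm{height}(\cl_1(X))=r_1(X)$, so $\varphi$ is a matroid isomorphism.

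I expect the only real obstacle to be the bookkeeping around loops and parallel classes: one must verify that singletons are classified correctly (loop versus non-loop), that the two partitions of the ground sets genuinely match up under $\alpha$, and that the join formula for closures holds with the right conventions. The cardinality-preserving assumption is indispensable here---dropping it allows one to alter loop multiplicities or parallel-class sizes without changing $\cF(M)$, which is precisely the phenomenon illustrated for \qM{}s in \cref{E-Converse}---but given that hypothesis the construction of $\varphi$ and the verification above are routine.
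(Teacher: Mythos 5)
Your proposal is correct and follows essentially the same strategy as the paper: build the bijection of ground sets by matching the loop sets and the parallel classes (which is possible exactly because $\alpha$ is a cardinality-preserving bijection on the rank-one flats), then show that closure commutes with this bijection via $\alpha$, and deduce rank preservation from the height function of the lattice of flats. The only cosmetic difference is that you derive $\cl_2(\varphi(X))=\alpha(\cl_1(X))$ from the join formula $\cl(X)=\bigvee_{s\in X}\cl(\{s\})$ and join-preservation of $\alpha$, whereas the paper first checks $\beta(F)=\alpha(F)$ for all flats $F$ (writing each flat as the union of its rank-one subflats) and then runs a short double-inclusion argument.
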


\begin{proof}
Since $S_i\in\cF(M_i)$ we have $|S_1|=|S_2|$ and $r_1(S_1)=r_2(S_2)=:k$.
Let $F_1,\ldots,F_t$ be the flats of~$M_1$ of rank~$1$.
Then $\alpha(F_1),\ldots,\alpha(F_t)$ are the flats of~$M_2$ of rank~$1$.
The flat axioms imply $\bigcup_{j=1}^t F_j=S_1$ and likewise $\bigcup_{j=1}^t \alpha(F_j)=S_2$.
Furthermore, $F_j\cap F_{j'}=\cl_1(\emptyset)$ and $\alpha(F_j)\cap\alpha(F_{j'})=\cl_2(\emptyset)=\alpha(\cl_1(\emptyset))$ for all $j\neq j'$
(where $\cl_i(\cdot)$ denotes the closure in~$M_i$).
By assumption $|F_j|=|\alpha(F_j)|$ for all $j\in[t]$.
All of this implies the existence of a bijection 
\[
     \beta:S_1\longrightarrow S_2\ \text{ such that $\beta(\cl_1(\emptyset))=\cl_2(\emptyset)$ and $\beta(F_j)=\alpha(F_j)$ for $j\in[t]$.}
\]
Note that $\beta$ is not unique.
\\
1) We show that $\beta(F)=\alpha(F)$ for all $F\in\cF(M_1)$. 
By construction this is true for the flats of rank at most~$1$.
Choose $F\in\cF(M_1)$ and let $F_{i_1},\ldots,F_{i_\ell}$ be the flats of rank~$1$ contained in~$F$. 
Then $F=\bigvee_{j=1}^\ell F_{i_j}$ (see also \cite[Thm.~1.7.5]{Ox11}). 
But then we even have $F= \bigcup_{j=1}^\ell F_{i_j}$.
Indeed, for every $s\in F$ we have $\cl_1(\{s\})\subseteq F$ and either $\cl_1(\{s\})=\cl_1(\emptyset)$ or $\cl_1(\{s\})$ has rank~$1$.
In either case $s\in \bigcup_{j=1}^\ell F_{i_j}$ and thus  $F= \bigcup_{j=1}^\ell F_{i_j}$.
Similarly we have $\bigvee_{j=1}^\ell \alpha(F_{i_j})=\bigcup_{j=1}^\ell \alpha(F_{i_j})$.
All of this gives us $\beta(F)= \bigcup_{j=1}^\ell \beta(F_{i_j})=\bigcup_{j=1}^\ell \alpha(F_{i_j})=\bigvee_{j=1}^\ell \alpha(F_{i_j})
=\alpha(\bigvee_{j=1}^\ell F_{i_j})=\alpha(F)$, where the fourth step follows from the fact that~$\alpha$ is a lattice isomorphism.
\\
2) We show that $\cl_2(\beta(V))=\alpha(\cl_1(V))$ for all subsets $V\subseteq S_1$. Let $\cl_1(V)=F$ and $\cl_2(\beta(V))=G$.
Since $\beta(V)\subseteq\beta(F)=\alpha(F)$, which is a flat, we have $G\subseteq\alpha(F)$.
In the same way, $V=\beta^{-1}(\beta(V))\subseteq\beta^{-1}(G)=\alpha^{-1}(G)$ and thus $F=\cl_1(V)\subseteq\alpha^{-1}(G)$ because
the latter is a flat. Thus $\alpha(F)\subseteq G$ and all of this implies $\cl_2(\beta(V))=G=\alpha(F)=\alpha(\cl_1(V))$, as desired.
\\
3) Let $V\subseteq S_1$. Using that $\alpha$ is rank-preserving (because it is height-preserving), we obtain 
$r_1(V)=r_1(\cl_1(V))=r_2(\alpha(\cl_1(V)))=r_2(\cl_2(\beta(V)))=r_2(\beta(V))$.
Hence~$\beta$ is an isomorphism between the matroids ~$M_1$ and~$M_2$.
\end{proof}

We return to \qM{}s and show that \qM{}s are weakly isomorphic iff their projectivizations are isomorphic.
To do so, we need the following fact.

\begin{lemma}\label{P-MPMFlats}
Let $\cM$ be a \qM{} and $\P\cM$ be its projectivization. Then 
\[
   \gamma:\cF(\cM)\longrightarrow\cF(\P\cM),\quad F\longmapsto\P F
\]
is a bijection satisfying $F\leq F'\Longleftrightarrow \gamma(F)\subseteq \gamma(F')$ for all $F,F'\in\cF(\cM)$.
In particular, $\gamma$ is a lattice isomorphism. 
Furthermore, $|\gamma(F)|=(q^{\dim F}-1)/(q-1)$ for all $F\in\cF(\cM)$.
\end{lemma}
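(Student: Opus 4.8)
The plan is to reduce the whole statement to a single comparison between the matroid closure operator of $\P\cM$ and the $q$-matroid closure operator $\cl$ of $\cM$ from \cref{R-FlatCycFlatsLattice}. The key identity I would establish first is that for every subset $A\subseteq\P E$,
\[
   \cl_{\P\cM}(A)=\P\big(\cl(\subspace{A})\big),
\]
where $\subspace{A}$ is the subspace spanned by the lines in $A$ (see \cref{Nota}). This is immediate from the definitions: a point $\subspace{v}\in\P E$ lies in $\cl_{\P\cM}(A)$ iff $r(A\cup\{\subspace{v}\})=r(A)$, which by \cref{D-ProjectqM} says $\rho(\subspace{A}+\subspace{v})=\rho(\subspace{A})$, i.e.\ $v\in\cl(\subspace{A})$. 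Since $\cl(\subspace{A})$ is itself a subspace, applying $\P$ to it is meaningful.

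Granting this identity, the remaining steps are short. First I would check that $\gamma$ is well defined: for a flat $F$ of $\cM$ one has $\subspace{\P F}=F$ (the lines of a subspace span it), hence $\cl_{\P\cM}(\P F)=\P(\cl(F))=\P F$, so $\P F\in\cF(\P\cM)$. Injectivity follows because $F=\subspace{\P F}$ recovers $F$ from $\gamma(F)=\P F$. For surjectivity, given a flat $A\in\cF(\P\cM)$, the identity gives $A=\cl_{\P\cM}(A)=\P(\cl(\subspace{A}))$; spanning the lines on both sides yields $\subspace{A}=\cl(\subspace{A})$, so $F:=\subspace{A}$ is a flat of $\cM$ with $\gamma(F)=\P\subspace{A}=A$.

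Next I would verify the order equivalence $F\le F'\iff\P F\subseteq\P F'$. The forward direction is clear, and the reverse follows by applying $\subspace{\,\cdot\,}$ together with $F=\subspace{\P F}$ and $F'=\subspace{\P F'}$. Thus $\gamma$ is an order isomorphism between the lattices of flats $(\cF(\cM),\le)$ and $(\cF(\P\cM),\subseteq)$, both of which are geometric lattices by \cref{R-FlatCycFlatsLattice}, and an order isomorphism between lattices automatically preserves meets and joins, so $\gamma$ is a lattice isomorphism. Finally, the cardinality claim is just the count of lines inside a subspace, $|\gamma(F)|=|\P F|=\subspace{\dim F}_q=(q^{\dim F}-1)/(q-1)$.

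The only point requiring genuine care, and the one I would write out most carefully, is the surjectivity step, specifically the passage from $A=\P(\cl(\subspace{A}))$ to $\subspace{A}=\cl(\subspace{A})$: one must argue that spanning the set of all lines contained in the subspace $\cl(\subspace{A})$ returns exactly that subspace, so that $\subspace{A}$ is genuinely a flat of $\cM$ rather than merely something whose closure is a flat. Once the closure identity above is in hand, everything else is formal.
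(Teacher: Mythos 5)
Your proof is correct, but it takes a different route from the paper: the paper does not actually prove the bijectivity and order-preservation at all, it simply cites \cite[Lem.~16, Prop.~21]{JPV23} and \cite[Lem.~3.2]{Ja23} and only adds the (easy) observations that an order isomorphism of lattices is a lattice isomorphism and that $|\P F|=\subspace{\dim F}_q$. What you supply instead is a self-contained argument resting on the single commutation identity $\cl_{\P\cM}(A)=\P\big(\cl(\subspace{A})\big)$, which follows directly from $r(A\cup\{\subspace{v}\})=\rho(\subspace{A}+\subspace{v})$ and the definition of $\cl$ in \cref{R-FlatCycFlatsLattice}, together with the elementary fact $\subspace{\P V}=V$ for every subspace $V$ (valid also for $V=0$ with the convention $\P\,0=\emptyset$, $\subspace{\emptyset}=0$). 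Every step checks out: well-definedness, injectivity, surjectivity (your flagged passage from $A=\P(\cl(\subspace{A}))$ to $\subspace{A}=\cl(\subspace{A})$ is exactly $\subspace{\P V}=V$ applied to $V=\cl(\subspace{A})$), the order equivalence, and the cardinality count. The trade-off is transparency versus brevity: your version makes the lemma independent of the external references and exposes the reason the flats correspond, namely that projectivization intertwines the two closure operators, while the paper's citation-based proof is shorter and defers the content to \cite{JPV23,Ja23}.
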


\begin{proof}
The fact that~$\gamma$ is a bijection and $\gamma$ and $\gamma^{-1}$ are order-preserving can be found in \cite[Lem.~16, Prop.~21]{JPV23} (see also \cite[Lem.~3.2]{Ja23}). Thus~$\gamma$ is a lattice isomorphism; see also 
\cite[Lem.~6.2]{GLJ25CF}.
The last part about $|\gamma(F)|$ is clear.
\end{proof}

Now we are ready to prove the desired equivalence.

\begin{theo}\label{T-PMIsoWeaklyIso}
Let $\cM_1$ and $\cM_2$ be \qM{}s. Then 
\[
   \P\cM_1\cong\P\cM_2\Longleftrightarrow \text{$\cM_1$ and $\cM_2$ are weakly isomorphic.}
\]
As a consequence, weakly isomorphic \qM{}s share the same Whitney function. 

\end{theo}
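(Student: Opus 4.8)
The plan is to reduce the equivalence to \cref{T-FIsoIso} by transporting lattice isomorphisms back and forth through the bijections $\gamma_i:\cF(\cM_i)\longrightarrow\cF(\P\cM_i)$ supplied by \cref{P-MPMFlats}. The crucial observation is that $\gamma_i$ sends a flat $F$ of dimension $d$ to a flat of $\P\cM_i$ of cardinality $(q^d-1)/(q-1)$, and since $d\mapsto(q^d-1)/(q-1)$ is strictly increasing, hence injective on $\N_0$, a lattice isomorphism between $\cF(\cM_1)$ and $\cF(\cM_2)$ is dimension-preserving if and only if the isomorphism between $\cF(\P\cM_1)$ and $\cF(\P\cM_2)$ obtained by conjugating with the $\gamma_i$ is cardinality-preserving. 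This single equivalence drives both implications.

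For the direction ``$\Leftarrow$'', I would start from a dimension-preserving lattice isomorphism $\alpha:\cF(\cM_1)\longrightarrow\cF(\cM_2)$ and form the composite $\gamma_2\circ\alpha\circ\gamma_1^{-1}:\cF(\P\cM_1)\longrightarrow\cF(\P\cM_2)$. As a composition of lattice isomorphisms it is again a lattice isomorphism, and the cardinality formula of \cref{P-MPMFlats} together with $\dim\alpha(F)=\dim F$ shows that it preserves cardinalities. Then \cref{T-FIsoIso} yields $\P\cM_1\cong\P\cM_2$.

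For ``$\Rightarrow$'', I would first note that any matroid isomorphism $\beta$ witnessing $\P\cM_1\cong\P\cM_2$ commutes with the closure operator, hence maps flats bijectively to flats, and, being a bijection of ground sets, preserves cardinalities and the lattice operations; it therefore induces a cardinality-preserving lattice isomorphism $\hat\beta:\cF(\P\cM_1)\longrightarrow\cF(\P\cM_2)$. Conjugating in the opposite direction gives the lattice isomorphism $\gamma_2^{-1}\circ\hat\beta\circ\gamma_1:\cF(\cM_1)\longrightarrow\cF(\cM_2)$, and the injectivity of $d\mapsto(q^d-1)/(q-1)$ forces it to be dimension-preserving, so $\cM_1$ and $\cM_2$ are weakly isomorphic.

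For the concluding consequence, I would observe that weakly isomorphic \qM{}s share the same ground-space dimension $n$ (apply the dimension-preserving $\alpha$ to the top flats $E_1,E_2$) and the same rank $k$ (any lattice isomorphism of flats is height-, hence rank-preserving, cf.\ the remark following \cref{D-WeakIso}). By the equivalence just established $\P\cM_1\cong\P\cM_2$, so these isomorphic matroids have equal Whitney functions $R_{\P\cM_1}=R_{\P\cM_2}$; applying the monomial substitution $\Omega_{\cH}$ of \cref{C-SubWhitney}, whose coefficients depend only on $n,k,q$ and are thus common to both \qM{}s, gives $R_{\cM_1}=R_{\cM_2}$. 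I expect the only delicate bookkeeping to be in the backward direction, namely verifying that $\beta$ induces a genuinely cardinality-preserving lattice isomorphism of flats and that the conjugation lands in the correct lattice; once the cardinality formula of \cref{P-MPMFlats} is in hand, the strict monotonicity of $d\mapsto(q^d-1)/(q-1)$ disposes of the cardinality-versus-dimension translation with no further computation.
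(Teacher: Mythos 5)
Your proposal is correct and follows essentially the same route as the paper: both directions are obtained by conjugating with the lattice isomorphisms $\gamma_i$ of \cref{P-MPMFlats} and invoking \cref{T-FIsoIso}, with the injectivity of $d\mapsto(q^d-1)/(q-1)$ translating dimension-preserving into cardinality-preserving, and the consequence follows from \cref{T-WhitneyProj} (equivalently \cref{C-SubWhitney}). Your write-up merely spells out the easy converse of \cref{T-FIsoIso} and the $n,k$ bookkeeping that the paper leaves implicit.
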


\begin{proof}
Thanks to \cref{T-FIsoIso} $\P\cM_1\cong\P\cM_2$ is equivalent to the existence of a 
cardinality-preserving lattice isomorphism between $\cF(\P\cM_1)$ and $\cF(\P\cM_2)$.
By \cref{P-MPMFlats} this is equivalent to the existence of 
a dimension-preserving lattice isomorphism between~$\cF(\cM_1)$ and~$\cF(\cM_2)$ and thus to~$\cM_1$ and~$\cM_2$ being weakly isomorphic.
The consequence follows from  \cref{T-WhitneyProj}.
\end{proof}

We derive another property of weakly isomorphic \qM{}s.
The following notion will be needed.

\begin{defi}\label{D-drBij}
Let $\cM_i=(E_i,\rho_i),\,i=1,2$, be \qM{}s and fix subsets $\cV_i\subseteq\cL(E_i)$. A map $\alpha:\cV_1\longrightarrow\cV_2$ is called a 
\textbf{dr-map} if~$\alpha$ is dimension- and rank-preserving, that is, $\dim V=\dim\alpha(V)$ and $\rho_1(V)=\rho_2(\alpha(V))$ for 
all $V\in\cV_1$.  
If in addition,~$\alpha$ is a bijection, we call it a \textbf{dr-bijection}.
\end{defi}

The notion of a dr-bijection depends on the underlying \qM{}s. The latter will always be clear from the context.
Now we obtain the following result. 
Note that it implies that weakly isomorphic \qM{}s have the same Whitney function -- as we know already from \cref{T-PMIsoWeaklyIso}.

\begin{theo}\label{T-WeakIsoDRBij}
Let $\cM_1=(E_1,\rho_1)$ and $\cM_2=(E_2,\rho_2)$ be weakly isomorphic \qM{}s and $\alpha:\cF(\cM_1)\longrightarrow\cF(\cM_2)$ a dimension-pre\-serving lattice isomorphism.
Then $\alpha$ extends to a dr-bijection $\beta:\cL(E_1)\longrightarrow\cL(E_2)$ such that $\beta(\cl_1^{-1}(F))=\cl_2^{-1}(\alpha(F))$ for all $F\in\cF(\cM_1)$. 
\end{theo}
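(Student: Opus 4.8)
The plan is to exploit the partition of each subspace lattice into its closure fibers and to build $\beta$ one fiber at a time. The map $\cl_i:\cL(E_i)\longrightarrow\cF(\cM_i)$ is a surjection whose fibers partition $\cL(E_i)$, and by \cref{R-FlatCycFlatsLattice} the fiber over a flat $F$ is $\cl_i^{-1}(F)=\{V\in\cL(F)\mid \rho_i(V)=\rho_i(F)\}$. Since $\alpha$ is a bijection on flats, the blocks $\cl_2^{-1}(\alpha(F))$ partition $\cL(E_2)$ as $F$ ranges over $\cF(\cM_1)$. It therefore suffices to construct, for each flat $F$, a dimension-preserving bijection $\beta_F:\cl_1^{-1}(F)\longrightarrow\cl_2^{-1}(\alpha(F))$ and then glue the $\beta_F$ together. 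The resulting $\beta$ is a bijection $\cL(E_1)\to\cL(E_2)$ because matched partition blocks are bijected, and it satisfies $\beta(\cl_1^{-1}(F))=\cl_2^{-1}(\alpha(F))$ by construction. Moreover every element of $\cl_1^{-1}(F)$ has rank $\rho_1(F)=\rho_2(\alpha(F))$, the last equality holding because $\alpha$ is height- and hence rank-preserving; thus a dimension-preserving $\beta_F$ is automatically rank-preserving, so $\beta$ is a dr-bijection.

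The existence of a dimension-preserving $\beta_F$ reduces to matching, for each $j$, the number of $V\in\cl_1^{-1}(F)$ with $\dim V=\rho_1(F)+j$ against the analogous count for $\alpha(F)$. These counts are the coefficients of the Whitney functions of the restricted $q$-matroids $\cM_i|_F:=(F,\rho_i|_{\cL(F)})$: the coefficient of $x^0y^j$ in $R_{\cM_1|_F}$ equals $|\{V\leq F\mid \rho_1(V)=\rho_1(F),\ \dim V=\rho_1(F)+j\}|$. The heart of the argument, and the step I expect to be the main obstacle, is to show that $\cM_1|_F$ and $\cM_2|_{\alpha(F)}$ are themselves weakly isomorphic. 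For this I would first identify $\cF(\cM_i|_F)$ with the principal ideal $\{G\in\cF(\cM_i)\mid G\leq F\}$: a flat $G'$ of $\cM_i|_F$ satisfies $G'=\cl_i(G')\cap F$ directly from the definition of closure, and $\cl_i(G')\cap F$ is the meet of two flats of $\cM_i$, hence a flat of $\cM_i$ below $F$ by \cref{R-FlatCycFlatsLattice}; the reverse inclusion is immediate. Because $\alpha$ is a lattice isomorphism, it carries this ideal isomorphically onto $\{G'\in\cF(\cM_2)\mid G'\leq\alpha(F)\}=\cF(\cM_2|_{\alpha(F)})$, and the restriction is still dimension-preserving. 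Hence $\cM_1|_F$ and $\cM_2|_{\alpha(F)}$ are weakly isomorphic, and by the consequence of \cref{T-PMIsoWeaklyIso} they share the same Whitney function. This yields the required equality of fiber dimension-distributions and thus the existence of $\beta_F$.

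It remains to check that $\beta$ extends $\alpha$. Among the subspaces of $F$, the unique element of dimension $\dim F$ lying in $\cl_1^{-1}(F)$ is $F$ itself, and likewise $\alpha(F)$ is the unique top-dimensional member of $\cl_2^{-1}(\alpha(F))$, its dimension being $\dim\alpha(F)=\dim F$ since $\alpha$ is dimension-preserving. Any dimension-preserving $\beta_F$ must therefore map $F\mapsto\alpha(F)$, so $\beta$ restricts to $\alpha$ on $\cF(\cM_1)$. Combining this with the first two paragraphs, $\beta$ is a dimension- and rank-preserving bijection $\cL(E_1)\longrightarrow\cL(E_2)$ extending $\alpha$ and satisfying $\beta(\cl_1^{-1}(F))=\cl_2^{-1}(\alpha(F))$ for every $F\in\cF(\cM_1)$, as claimed.
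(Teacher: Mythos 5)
Your proof is correct, and its overall skeleton --- partition each subspace lattice into closure fibers, match fibers via $\alpha$, and assemble $\beta$ from dimension-preserving bijections $\beta_F:\cl_1^{-1}(F)\rightarrow\cl_2^{-1}(\alpha(F))$ --- coincides with the paper's. Where you genuinely diverge is in the key counting step, namely that $\cl_1^{-1}(F)$ and $\cl_2^{-1}(\alpha(F))$ contain the same number of subspaces of each dimension. The paper proves this by induction on the rank of $F$: inside $\cL(F)$ the fibers over the flats strictly below $F$ already admit dimension-preserving bijections by the inductive hypothesis, and since $\cL(F)$ and $\cL(\alpha(F))$ contain the same number of subspaces of each dimension (because $\dim F=\dim\alpha(F)$), subtracting leaves matching counts for the two top fibers. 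You instead restrict the two \qM{}s to $F$ and $\alpha(F)$, identify $\cF(\cM_i|_F)$ with the principal ideal of flats below $F$ (a correct and cleanly argued step, using that meets of flats are intersections), conclude that the restrictions are weakly isomorphic, and invoke the consequence of \cref{T-PMIsoWeaklyIso} to equate their Whitney functions, whose $x^0y^j$-coefficients are exactly the fiber dimension counts. There is no circularity, since \cref{T-PMIsoWeaklyIso} is established independently of this theorem. The trade-off: your route avoids the induction and is shorter at this point in the paper, but it imports the comparatively heavy machinery behind \cref{T-PMIsoWeaklyIso} (projectivization, \cref{T-FIsoIso}, and \cref{T-WhitneyProj}), whereas the paper's subtraction argument is elementary and self-contained; your version also yields the stronger observation that all restrictions $\cM_1|_F$ and $\cM_2|_{\alpha(F)}$ share full Whitney functions, of which only the corank-zero coefficients are needed.
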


As we know from \cref{E-Converse2} there exist non-isomorphic \qM{}s that admit a 
dimension-preserving lattice isomorphism between their lattices of flats.
This implies that the bijection~$\beta$ in \cref{T-WeakIsoDRBij} is in general not order-preserving.
Indeed, if~$\beta$ was order-preserving it would be a lattice isomorphism and thus be induced by a semi-linear isomorphism 
between~$E_1$ and~$E_2$ thanks to the Fundamental Theorem of Projective Geometry.
Thus,  the \qM{}s would be isomorphic if the ground field is a prime field.

\begin{proof}
Since $E_i\in\cF(\cM_i)$, we have $\dim E_1=\dim E_2$ and $\rho_1(E_1)=\rho_2(E_2)=:k$.
For $i=1,2$ and $r\in[k]_0$ define
\[
  \cL_r(E_i)=\{V\leq E_i\mid \rho_i(V)=r\}\ \text{ and }\ \cL_{\leq r}(E_i)=\{V\leq E_i\mid \rho_i(V)\leq r\}.
\]
In the same way, define $\cF_r(\cM_i)$ and $\cF_{\leq r}(\cM_i)$.
It suffices to show that for every~$r$ there exists a dr-bijection $\beta_r:\cL_r(E_1)\longrightarrow\cL_r(E_2)$ such that 
\begin{equation}\label{e-betar}
   \beta_r(F)=\alpha(F)\ \text{ and }\
   \beta_r(\cl_1^{-1}(F))=\cl_2^{-1}(\alpha(F)) \ \text{ for all $F\in\cF_r(\cM_1)$.}
\end{equation}
We induct on~$r$.
\\
1) Let $r=0$. Then $\cF_0(\cM_i)=\{\cl_i(0)\}$ and $\cL_0(E_i)=\cL(\cl_i(0))$.
By assumption $\dim\cl_1(0)=\dim\cl_2(0)$, and hence the subspaces in $\cL(\cl_1(0))$ and $\cL(\cl_2(0))$ are in a dimension-preserving one-to-one-correspondence.
In other words,  there exists a dr-bijection $\beta_0:\cL_0(E_1)\longrightarrow\cL_0(E_2)$ satisfying~\eqref{e-betar}.
\\
2) Suppose that for all $r\leq s$ there exists a dr-bijection~$\beta_r$ satisfying~\eqref{e-betar}. 
Set $\cF_{s+1}(\cM_1)=\{F_1,\ldots,F_t\}$.
Then $\cF_{s+1}(\cM_2)=\{\alpha(F_1),\ldots,\alpha(F_t)\}$ and
\begin{equation}\label{e-Disjoint}
   \bigcup_{j=1}^t\cl_1^{-1}(F_j)
   =\cL_{s+1}(E_1)\ \text{ and }
   \bigcup_{j=1}^t\cl_2^{-1}(\alpha(F_j))
   =\cL_{s+1}(E_2)\ 
   \text{ and the unions are disjoint.}
\end{equation}
We will show that for every $j\in[t]$ there exists a dr-bijection
 $\gamma_j:\cl_1^{-1}(F_j)\longrightarrow \cl_2^{-1}(\alpha(F_j))$.
 To do so, fix $j\in[t]$ and let $\cL(F_j)\cap\cF_{\leq s}(\cM_1)=\{G_1,\ldots,G_a\}$ (these are the flats in the sublattice strictly below~$F_j$).
Since~$\alpha$ is a lattice isomorphism, we obtain
\begin{equation}\label{e-CLF}
  \cL(F_j)=\cl_1^{-1}(F_j)\cup\bigcup_{\ell=1}^a\cl_1^{-1}(G_\ell)\ \text{ and }
  \cL(\alpha(F_j))=\cl_2^{-1}(\alpha(F_j))\cup\bigcup_{\ell=1}^a\cl_2^{-1}(\alpha(G_\ell)),
\end{equation}
where all unions are disjoint.
Using suitable restrictions of the maps $\beta_r$ for $r\leq s$ we obtain a dr-bijection
\[
    \hat{\beta}:\bigcup_{\ell=1}^a\cl_1^{-1}(G_\ell)\longrightarrow\bigcup_{\ell=1}^a\cl_2^{-1}(\alpha(G_\ell)).
\]
Since $\dim F_j=\dim\alpha(F_j)$, the subspaces in $\cL(F_j)$ and $\cL(\alpha(F_j))$ are in a one-to-one dimension-preserving correspondence,
and therefore~\eqref{e-CLF} implies the existence of a dimension-preserving bijection
$\gamma_j:\cl_1^{-1}(F_j)\longrightarrow \cl_2^{-1}(\alpha(F_j))$. It is trivially rank-preserving.
Moreover, its dimension-preserving property implies that $\gamma_j(F_j)=\alpha(F_j)$.
Using~\eqref{e-Disjoint} and combining all dr-bijections $\gamma_1,\ldots,\gamma_t$ we arrive at the desired dr-bijection $\beta_{s+1}$.
\end{proof}

The following examples illustrate that for the results above both properties of the map~$\alpha$ in \cref{D-WeakIso} are crucial, 
being a lattice isomorphism and being dimension-preserving.

\begin{exa}\label{E-LatticeIsoNotDR}
A simple example of \qM{}s where the lattices of flats are isomorphic but do not admit a dimension-preserving isomorphism is given by the \qM{}s $\mathsf{U}_{1,3},\,{\mathsf P}_1$, and ${\mathsf P}_2$ in \cite[p.~325 and 326]{CeJu24}. 
In each case the sets of flats is $\{\cl(0),E\}$ and thus the lattices are isomorphic. However, since $\cl(0)$ takes dimensions $0,\,1$, and~$2$, respectively, the lattices of flats do not admit a dimension-preserving isomorphism.
Hence the \qM{}s are not weakly isomorphic. 
One easily checks that they have different Whitney functions and hence there do not exist dr-bijections between the subspace lattices 
of their ground spaces (compare with \cref{T-WeakIsoDRBij}).
\end{exa}

A more interesting example is the following, where $\cl(0)=0$ for both \qM{}s.

\begin{exa}\label{E-LatticeIsoNotDR2}
Let~$E$ be an $n$-dimensional $\F$-vector space and $N\in\N$. Suppose we have a collection of subspaces
$\cF=\{0,F_1,\ldots,F_N,E\}$ that form a lattice of the form
\begin{equation}\label{e-Lattice}
\begin{array}{l}
   \begin{xy}
   (45,5)*+{0}="a0";%
   (10,15)*+{\mbox{\footnotesize{$F_1$}}}="b0";%
   (20,15)*+{\mbox{\footnotesize{$F_2$}}}="b1";%
   (30,15)*+{\mbox{\footnotesize{$F_3$}}}="b2";%
   (40,15)*+{\mbox{\footnotesize{$\cdots$}}}="b4";%
   (45,15)*+{\mbox{\footnotesize{$\cdots$}}}="b5";%
   (50,15)*+{\mbox{\footnotesize{$\cdots$}}}="b6";%
   (60,15)*+{\mbox{\footnotesize{$F_{N-2}$}}}="b7";%
   (70,15)*+{\mbox{\footnotesize{$F_{N-1}$}}}="b8";%
   (80,15)*+{\mbox{\footnotesize{$F_{N}$}}}="b9";%
   (45,25)*+{\mbox{\footnotesize{$E$}}}="c";%
    {\ar@{-}"a0";"b0"};
    {\ar@{-}"a0";"b1"};
     {\ar@{-}"a0";"b2"};
      {\ar@{-}"a0";"b7"};
      {\ar@{-}"a0";"b8"};
      {\ar@{-}"a0";"b9"};
     {\ar@{-}"b0";"c"};
    {\ar@{-}"b1";"c"};
     {\ar@{-}"b2";"c"};
      {\ar@{-}"b7";"c"};
      {\ar@{-}"b8";"c"};
      {\ar@{-}"b9";"c"};
  \end{xy}
\end{array}
\end{equation}
Then~$\cF$ satisfies the axioms of flats of a \qM{} (see \cite[Def.~10]{BCJ22}) iff $\hat{\cF}:=\{F_1,\ldots,F_N\}$ is a \emph{mixed spread}
of~$E$, that is, $F_i\cap F_j=\{0\}$  for $i\neq j$ and $\bigcup_{i=1}^{N}F_i=E$,
and the subspaces~$F_i$ may have different dimensions.
In other words, every nonzero vector of~$E$ is in exactly one subspace~$F_i$.
It is not hard to give distinct instances of~$\hat{\cF}$ for $E=\F_2^6$ and $N=15$.
Start with a 3-spread~$\cG=\{V_1,\ldots,V_9\}$, that is, $\dim V_i=3$ for $i\in[9]$.
Note that each~$V_i$ is the union of 7 distinct lines.
Without loss of generality we may assume that $V_1=\subspace{e_1,e_2,e_3},\,V_2=\subspace{e_4,e_5,e_6},\,
V_3=\subspace{e_1+e_4,e_2+e_5,e_3+e_6}$.
Dividing~$V_1$ into its collection of~$7$ lines we obtain the mixed spread
\[  
   \hat{\cF}_1=\{\subspace{v}\mid v\in V_1\setminus\{0\}\}\cup\{V_2,\ldots,V_9\}.
\]
Furthermore, a suitable grouping of the~$21$ nonzero vectors in $V_1\cup V_2\cup V_3$ leads to the mixed spread
\begin{align*}
   \hat{\cF}_2&=\{\subspace{e_1+e_3},\,\subspace{e_4+e_6},\,\subspace{e_1+e_3+e_4+e_6}\}\\
        &\hspace*{1em}\cup\{\subspace{e_1,e_2},\subspace{e_4,e_5},\subspace{e_3,e_6},\subspace{e_1+e_4,e_2+e_5},
          \subspace{e_2+e_3,e_5+e_6},\subspace{e_1+e_2+e_3,e_4+e_5+e_6}\}\\
        &\hspace*{1em}\cup\{V_4,\ldots,V_9\}.
\end{align*}
Thus we have two mixed spreads of size~$15$ and therefore $\cF_i=\hat{\cF}_i\cup\{0,\F_2^6\},\,i=1,2$, 
form lattices as in \eqref{e-Lattice}.
Hence we obtain two \qM{}s $\cM_1$ and $\cM_2$ on the ground space~$\F_2^6$ whose lattices of flats are 
isomorphic but do not admit a dimension-preserving lattice isomorphism.
Their Whitney functions are
\begin{align*}
   R_{\cM_1}&=x^{2} + (8 y^{2} + 56 y + 63) x + y^{4} + 63 y^{3} + 651 y^{2} + 1387 y + 595,\\
   R_{\cM_2}&=x^{2} + (6 y^{2} + 48 y + 63) x + y^{4} + 63 y^{3} + 651 y^{2} + 1389 y + 603.
\end{align*}
Again we observe that -- different from weakly isomorphic \qM{}s -- there does not exist a dr-bijection between the subspace lattices of the ground spaces 
of~$\cM_1$ and~$\cM_2$.
\end{exa}

\begin{exa}\label{E-FlatsNotIso}
The \qM{}s~$\cM_1,\,\cM_2$ in \cite[Ex.~3.4]{GLJ25CF} satisfy the following.
The lattices $\cL(E_1)$ and $\cL(E_2)$ have the same number of subspaces with a given dimension and rank, and the same is true for 
$\cF(\cM_1)$ and $\cF(\cM_2)$.
In other words, there exists a dr-bijection $\alpha:\cL(E_1)\longrightarrow\cL(E_2)$ whose restriction induces a dr-bijection
between $\cF(\cM_1)$ and $\cF(\cM_2)$.
However, the two lattices of flats are not isomorphic. This follows, for instance, from the fact that in~$\cM_2$ the 
flat~$\subspace{e_3}$ is contained in 28 other flats, whereas in $\cM_1$ there is no flat contained in 28 other flats.
As a consequence, the lattices $\cF(\P\cM_1)$ and $\cF(\P\cM_2)$ are not isomorphic 
and hence $\P\cM_1\not\cong\P\cM_2$. Nonetheless, it turns out that the \qM{}s $\cM_1$ and $\cM_2$ have the same 
Whitney function, and thus so do $\P\cM_1$ and~$\P\cM_2$.
\end{exa}

\section*{Open Problems}
The discussion of the last section gives rise to two immediate questions.
First, do \qM{}s that admit a dr-bijection between their lattices of flats share the same Whitney function (as is the case in \cref{E-FlatsNotIso})? 
We do not know whether the analogous question for matroids can be answered in the affirmative.
If the latter is the case, then the same is true for \qM{}s thanks to \cref{P-MPMFlats} and \cref{T-WhitneyProj}.
Secondly, suppose~$\cM_{G_1}$ and~$\cM_{G_2}$ are weakly isomorphic representable \qM{}s. 
What can be said about the higher support distributions of the rank-metric codes $\cC_1=\rs(G_1)$ and $\cC_2=\rs(G_2)$?

More generally, it remains to be investigated whether the coboundary polynomial for \qM{}s (defined suitably) also fits into the family of polynomial invariants related by monomial substitutions, and, furthermore, what can be said about the M\"obius polynomial?

Finally, can the results on polynomial invariants be generalized to $q$-polymatroids and $\F$-linear rank-metric codes?

\bibliographystyle{abbrv}

\begin{thebibliography}{10}

\bibitem{ABNR22}
G.~Alfarano, M.~Borello, A.~Neri, and A.~Ravagnani.
\newblock Linear cutting blocking sets and minimal codes in the rank metric.
\newblock {\em J. Comb. Theory. Ser. A}, 192, 2022.
\newblock DOI 10.1016/ j.jcta.2022.105658.

\bibitem{AlBy23}
G.~Alfarano and E.~Byrne.
\newblock Recursive properties of the characteristic polynomial of weighted
  lattices.
\newblock Preprint 2023. arXiv: 2305.07567.

\bibitem{AlBy24}
G.~Alfarano and E.~Byrne.
\newblock The cyclic flats of a \mbox{$q$}-matroid.
\newblock {\em J. Algebraic Combin.}, 60:97--126, 2024.

\bibitem{Ba97}
A.~Barg.
\newblock The matroid of supports of a linear code.
\newblock {\em AAECC}, 8:165--172, 1997.

\bibitem{BCJ17}
G.~Bollen, H.~Crapo, and R.~Jurrius.
\newblock The {T}utte \mbox{$q$}-polynomial.
\newblock Preprint 2017. arXiv: 1707. 03459.

\bibitem{Bri07}
T.~Britz.
\newblock Higher support matroids.
\newblock {\em Discrete Math.}, 307:2300--2308, 2007.

\bibitem{Bri10}
T.~Britz.
\newblock Code enumerators and {T}utte polynomials.
\newblock {\em IEEE Trans. Inform. Theory}, IT-56:4350--4358, 2010.

\bibitem{BCIJS23}
E.~Byrne, M.~Ceria, S.~Ionica, R.~Jurrius, and E.~Sa{\c{c}}ikara.
\newblock Constructions of new matroids and designs over {$\text{GF}(q)$}.
\newblock {\em Des. Codes Cryptogr.}, 91:451--473, 2023.

\bibitem{BCJ22}
E.~Byrne, M.~Ceria, and R.~Jurrius.
\newblock Constructions of new $q$-cryptomorphisms.
\newblock {\em J. Comb. Theory. Ser. B}, 153:149--194, 2022.

\bibitem{ByFu25}
E.~Byrne and A.~Fulcher.
\newblock Invariants of {T}utte partitions and a $q$-analogue.
\newblock {\em J. Comb. Theory. Ser. B}, 172:1--43, 2025.

\bibitem{CeJu24}
M.~Ceria and R.~Jurrius.
\newblock The direct sum of \mbox{$q$}-matroids.
\newblock {\em J. Algebr. Comb.}, 59:291--330, 2024.
\newblock See also arXiv: 2109.13637.

\bibitem{GLJ22Gen}
H.~Gluesing-Luerssen and B.~Jany.
\newblock $q$-{P}olymatroids and their relation to rank-metric codes.
\newblock {\em J. Algebr. Comb.}, 56:725--753, 2022.

\bibitem{GLJ24DSCyc}
H.~Gluesing-Luerssen and B.~Jany.
\newblock Decompositions of $q$-matroids using cyclic flats.
\newblock {\em SIAM J. Discrete Math.}, 38:2940--2970, 2024.

\bibitem{GLJ25CF}
H.~Gluesing-Luerssen and B.~Jany.
\newblock The cloud and flock polynomials of $q$-matroids.
\newblock arXiv: 2501.14984, 2025.

\bibitem{Gor21}
E.~Gorla.
\newblock Rank-metric codes.
\newblock In W.~C. Huffman, J.-L. Kim, and P.~Sol{\'e}, editors, {\em A Concise
  Encyclopedia of Coding Theory}, pages 227--250. CRC, 2021.
\newblock See also arXiv 1902.02650.

\bibitem{GJLR19}
E.~Gorla, R.~Jurrius, H.~L{\'o}pez, and A.~Ravagnani.
\newblock Rank-metric codes and $q$-polymatroids.
\newblock {\em J. Algebraic Combin.}, 52:1--19, 2020.

\bibitem{Gre76}
C.~Greene.
\newblock Weight enumeration and the geometry of linear codes.
\newblock {\em Studies in Applied Mathematics}, 55:119--128, 1976.

\bibitem{Ja23}
B.~Jany.
\newblock The projectivization matroid of a $q$-matroid.
\newblock {\em SIAM J. Appl. Algebra Geom.}, 7:386--413, 2023.

\bibitem{JPV23}
T.~Johnsen, R.~Pratihar, and H.~Verdure.
\newblock Weight spectra of {G}abidulin rank-metric codes and {B}etti numbers.
\newblock {\em S{\~{a}}o Paulo J. Math. Sci.}, 17:208--241, 2023.

\bibitem{JuPe13}
R.~Jurrius and R.~Pellikaan.
\newblock Codes, arrangements and matroids.
\newblock In E.~Mart{\'i}nez-Moro, editor, {\em Algebraic Geometry Modeling in
  Information Theory (Series on Coding Theory and Cryptology, vol. 8)}, pages
  219--325. World Scientific, 2013.

\bibitem{JuPe17}
R.~Jurrius and R.~Pellikaan.
\newblock On defining generalized rank weights.
\newblock {\em Adv. Math. Commun.}, 11:225--235, 2017.

\bibitem{JuPe18}
R.~Jurrius and R.~Pellikaan.
\newblock Defining the $q$-analogue of a matroid.
\newblock {\em Electron. J. Combin.}, 25:P3.2, 2018.

\bibitem{LaTh94}
D.~Laksov and A.~Thorup.
\newblock Counting matrices with coordinates in finite fields and of fixed
  rank.
\newblock {\em Math. Scand.}, 74:19--33, 1994.

\bibitem{Ox11}
J.~Oxley.
\newblock {\em Matroid Theory}.
\newblock Oxford Graduate Text in Mathematics. Oxford University Press, 2nd
  edition, 2011.

\bibitem{Shi19}
K.~Shiromoto.
\newblock Codes with the rank metric and matroids.
\newblock {\em Des. Codes Cryptogr.}, 87:1765--1776, 2019.

\end{thebibliography}

\end{document}